\newtheorem{theorem}{Theorem}[section]
\newtheorem{proposition}[theorem]{Proposition}
\newtheorem{definition}[theorem]{Definition}
\newtheorem{lemma}[theorem]{Lemma}
\newtheorem{corollary}[theorem]{Corollary}
\numberwithin{equation}{section}
\renewcommand{\geq}{\geqslant}
\renewcommand{\leq}{\leqslant}
\newcommand{\legendre}[2]{\genfrac {(}{)}{1pt}{}{#1}{#2}}
\newcommand{\w}{\mathfrak w}
\newcommand{\p}{\mathfrak p}
\newcommand{\af}{\mathfrak a}
\newcommand{\m}{\mathfrak m}
\newcommand{\F}{\mathcal F}
\newcommand{\OO}{\mathcal O}
\newcommand{\OK}{\mathcal{O}_{\K}}
\newcommand{\HH}{\mathbb H}
\newcommand{\N}{\mathbb N}
\newcommand{\Z}{\mathbb Z}
\newcommand{\R}{\mathbb R}
\newcommand{\Q}{\mathbb Q}
\newcommand{\C}{\mathbb C}
\newcommand{\K}{K}
\newcommand{\Sl}{\operatorname {Sl}}
\newcommand{\Gl}{\operatorname {Gl}}
\newcommand{\Norm}{\operatorname {N}}
\newcommand{\wf}{\mathfrak{f}}
\newcommand{\wfone}{\mathfrak{f}_1}
\newcommand{\wftwo}{\mathfrak{f}_2}
\newcommand {\PhiNc}{\Phi_N^{\mathrm c}}
\newenvironment {alphenumerate}
   {
    
    \begin {enumerate}}
   {\end {enumerate}}
\newenvironment {romanenumerate}
{
    \begin {enumerate}}
{\end {enumerate}}
\newenvironment {alpharabenumerate}[1][]
   {
    \begin {enumerate}}
   {\end {enumerate}}
\begin{document}

\title{Generalised Weber Functions}

\author
{
\parbox [t]{6.3cm}{
Andreas Enge \\
INRIA, LFANT \\
CNRS, IMB, UMR 5251 \\
Univ. Bordeaux, IMB \\
33400 Talence \\
France \\
andreas.enge@inria.fr
}
\parbox [t]{6.3cm}{
Fran\c{c}ois Morain \\
INRIA Saclay--\^Ile-de-France \\
\& LIX (CNRS/UMR 7161) \\
\'Ecole polytechnique \\
91128 Palaiseau Cedex \\
France \\
morain@lix.polytechnique.fr
}
}
\date{20 December 2013}

\maketitle

\begin{abstract}
A generalised Weber function is given by $\w_N(z) = \eta(z/N)/\eta(z)$,
where $\eta(z)$ is the Dedekind function and $N$ is any integer; the
original function corresponds to $N=2$. We classify
the cases where some power $\w_N^e$ evaluated at
some quadratic integer
generates the ring class field associated to an order of an imaginary
quadratic field. We
compare the heights of our invariants by giving a general formula for
the degree of the modular equation relating $\w_N(z)$ and
$j(z)$. Our ultimate goal is the use of these invariants in
constructing reductions of elliptic curves over finite fields suitable
for cryptographic use.
\end{abstract}

\renewcommand{\thefootnote}{}
\footnotetext {\textit {2010 Mathematics Subject Classification}:
11G15, %Complex multiplication and moduli of abelian varieties
14K22} %Complex multiplication
\footnote {\textit {Key words}:
complex multiplication,
class invariants,
eta quotients}

%%%%% S
\section{Introduction}

Let $\K$ be an imaginary quadratic field of discriminant $\Delta <
0$. We are interested in orders $\OO$ of $\K$ having
discriminant $D = c^2 \Delta$. The principal order of discriminant
$\Delta$ is $\OK$, which is
generated by $\omega = \frac {1 + \sqrt \Delta}{2}$ if
$\Delta \equiv 1 \pmod 4$
resp. $\omega = \frac {\sqrt \Delta}{2}$ if $\Delta \equiv 0 \pmod 4$.
For any order $\OO$ of discriminant $D$, let $\K_D$ denote
the ring class field that is associated to it. It is well-known that
if $j$ denotes the modular invariant, then
$\K_D = \K(j(c \omega))$; so $\K_D/\K \simeq
\K[X]/(H_D(X))$, where the \textit {class polynomial} $H_D$
is the minimal polynomial of $j (c \omega)$.
It can be used to obtain elliptic curves over finite fields with a
number of points known in advance, with applications to cryptology,
in particular based on the Weil or Tate pairing
(cf.~\cite{FrScTe10}), and primality proving \cite{AtMo93}.

Since the class polynomial has a rather large height, it is
desirable to find smaller defining polynomials to speed up the computations.
There is a long history of such studies, going back to at least Weber
\cite{Weber02-III}; see, e.g., \cite{Birch69,Stark69,Meyer70} for
connections with the class number~$1$ problem.
Generally modular functions $f$ and special arguments $\alpha \in \OO$
are considered such that the \emph {singular value} $f(\alpha)$
lies in $\K_D$, in which case $f (\alpha)$ is called a
\textit {class invariant}.

Our ultimate goal is to build elliptic curves having CM, and this is
done using a so-called modular equation (with integer coefficients)
relating a modular function $f$ to $j$. For this to be efficient, we
need $f (\alpha)$ to have a small height {\em and} the corresponding modular
equation to be of small genus (with a predilection for genus $0$).

Part of the literature has concentrated on the functions
introduced by Weber, quotients of two $\eta$-functions with a transformation
of level~$2$ applied to one of them, see
\cite {Schertz76,Gee99,GeSt98,Schertz02} besides the already cited
sources. This is a perfect case for us, since the genus of the
associated modular curve is~$0$.

Results on more general $\eta$-quotients are given in
\cite{Hajir93,Hajir93b,HaVi97,Gee99,EnSc04,EnSc12}.
All of them are obtained using the modern tool for determining the Galois
action of the class group of~$\OO$ on singular values of modular functions,
namely Shimura's reciprocity law \cite{Shimura71}.
The present article is no exception to this rule.
For the sake of self-containedness and the reader's ease, we briefly
summarise in \S\ref{sct:Shimura} the reciprocity law in the version
of \cite{Schertz02}, which is most suited to actual computations.

In this article, we propose a systematic study of class invariants obtained
as singular values of the \emph {generalised Weber} functions $\w_N$,
defined and studied in \S\ref {sct:wN}, which are quotients of two
$\eta$-functions with a transformation of level~$N$ applied to one of them.
These appear in \cite [Table~1]{Schertz02} and as a special case
of \cite {HaVi97}. While there is some overlapping between this article
and \cite {HaVi97}, we follow a different approach:
The authors of \cite {HaVi97} use an ideal in the class group
to transform the $\eta$-function, and the norm of the ideal implicitly
determines the level; they then proceed to prove which root of unity
is needed for twisting the function so that a minimal power of it yields
a class invariant. On the other hand, we start with a fixed level and thus
a fixed generalised Weber function and determine the minimal power yielding
class invariants without using additional roots of unity.

A first result on the ``canonical'' power $\w_N^s$ is readily obtained
in \S\ref{sct:fullpowers} by a direct application of Shimura reciprocity.
Examining the Galois action on the singular values in \S\ref {sct:lowerbis}
allows us to determine the precise conditions under which lower powers
$\w_N^e$ with $e \mid s$ yield class invariants in
\S\ref {sct:specialization}.

While there is always some transformation level~$N$ (or, equivalently,
an ideal in the class group) such that the corresponding generalised Weber
function yields a class invariant, fixing the level first as we do it in
this study implies control over the height of the class invariants.
Indeed, this height, an important measure for the complexity
of computing a class polynomial,
is asymptotically given as a function of the degrees of the modular
polynomials relating the modular function to the $j$-invariant. 
Thus, the generalised Weber functions can be ordered totally with respect
to their computational efficiency, see \S\ref{sct:heights}, and the
invariants can be compared directly to other invariants in the literature,
cf. \cite{EnMo02,Enge07}.

Unlike \cite {HaVi97}, we explicitly consider levels $N$ that are
not coprime to~$6$, a considerable source of complication, which is justified
since the corresponding functions tend to yield class invariants of lower
height, see the formul{\ae} in \S\ref {ssct:heights} and
Table~\ref {tab:comparison}. Otherwise said, the corresponding modular
curves, related to $2$- and $3$-torsion points on elliptic curves, have a
lower genus than would be expected from the size of~$N$ alone.
This makes it easier to construct the associated elliptic curves with
complex multiplication; in particular, \cite {Morain07} shows how $\w_3$
can be used to directly write down the correct twist of the elliptic curve
with the desired number of points over a finite field.

Existing results in the literature often only state when
a singular value is a class invariant; to obtain the class polynomial,
however, one needs an explicit description of its algebraic conjugates.
These can be worked out using Shimura reciprocity again;
following the approach of $N$-systems introduced in \cite{Schertz02},
we obtain synthetic and simple descriptions of the conjugates, and moreover
determine when the class invariant has a minimal polynomial with rational
coefficients, that is, it defines the real subfield of the class field
over~$\Q$.

%%%%% S
\section{Class invariants by Shimura reciprocity}
\label{sct:Shimura}

In the following, we denote by $f \circ M$ the action of matrices
$M = \begin {pmatrix} a & b \\ c & d \end {pmatrix}
\in \Gamma = \Sl_2 (\Z) / \{ \pm 1 \}$
on modular functions given by
\[
(f \circ M) (z) = f (M z) = f \left( \frac {a z + b}{c z + d} \right).
\]
For $n \in \N$, let
$\Gamma (n) = \left\{ \begin {pmatrix} a & b \\ c & d \end {pmatrix}
\equiv \begin {pmatrix} 1 & 0 \\ 0 & 1 \end {pmatrix}
\pmod n \right\}$
be the principal congruence subgroup of level $n$; for a
congruence subgroup $\Gamma'$ such that
$\Gamma (n) \subseteq \Gamma' \subseteq \Gamma$, denote by
$\C_{\Gamma'}$ the field of modular functions for $\Gamma'$.
One of the most important congruence subgroups is given by
$\Gamma^0 (n) = \left\{ \begin {pmatrix} a & b \\ c & d \end {pmatrix}
\equiv \begin {pmatrix} \ast & 0 \\ \ast & \ast \end {pmatrix}
\pmod n \right\}$.

\begin {definition}
\label {def:FN}
The set $\F_n$ of modular functions of level $n$ rational over the
$n$-th cyclotomic field $\Q (\zeta_n)$ is given by all functions
$f$ such that
\begin {enumerate}
\item
$f$ is modular for $\Gamma (n)$ and
\item
the $q$-expansion of $f$ has coefficients in $\Q (\zeta_n)$, that is,
\[
f \in \Q (\zeta_n) \left( \left( q^{1/n} \right) \right),
\]
where $q^{1/n} = e^{2 \pi i z / n}$.
\end {enumerate}
\end {definition}

The function field extension $\F_n / \Q (j)$ has Galois group isomorphic to
$\Gl_2 (\Z / n \Z) / \{ \pm 1 \}$, where the isomorphism is defined by the following action of matrices on functions:
\begin {itemize}
\item
$(f \circ M) (z) = f (M z)$ as above for $M \in \Gamma$;
this implies in particular that also the $q$-expansion of $f \circ M$
has coefficients in $\Q (\zeta_n)$;
\item
$f \circ \begin {pmatrix} 1 & 0 \\ 0 & d \end {pmatrix}$
for $\gcd (d, n) = 1$ is obtained by applying to the $q$-expansion of $f$ the automorphism $\zeta_n \mapsto \zeta_n^d$;
\item
any other matrix $M$ that is invertible modulo $n$ may be decomposed as
$M \equiv M_1 \begin {pmatrix} 1 & 0 \\ 0 & d \end {pmatrix}
M_2 \pmod n$
with $\gcd (d, n) = 1$ and $M_1$, $M_2 \in \Gamma$, and
\[
(f \circ M) (z) = \left( \left( (f \circ M_1) \circ
\begin {pmatrix} 1 & 0 \\ 0 & d \end {pmatrix} \right)
\circ M_2 \right) (z).
\]
\end {itemize}

Shimura reciprocity makes a link between the Galois group of the
function field $\F_n$ and the Galois groups of class fields generated
over an imag\-i\-nary-qua\-drat\-ic field by singular values of modular
functions.

\begin {theorem}[Shimura's reciprocity law, Th.~5 of \cite{Schertz02},
Th.~5.1.2 of \cite{Schertz09}]
\label {th:shimura}
Let $f$ be a function in $\F_n$, $\Delta < 0$ a fundamental discriminant and
$\OO$ the order
of $\K = \Q (\sqrt \Delta)$ of conductor $c$.
In the following, all $\Z$-bases of ideals are written as column vectors.
Let $\af = \begin {pmatrix} \alpha_1 \\ \alpha_2 \end {pmatrix}_\Z$ with
basis quotient $\alpha = \frac {\alpha_1}{\alpha_2} \in \HH$ be a proper
ideal of $\OO$, $\m$ an ideal of $\OK$ of norm $m$ prime to $c n$,
$\overline \m$ its conjugate ideal and $M \in \Gl_2 (\Z)$ a matrix of
determinant $m$ such that
$M \begin {pmatrix} \alpha_1 \\ \alpha_2 \end {pmatrix}$
is a basis of $\af (\overline \m \cap \OO)$. If $f$ does not have a pole
in $\alpha$, then
\begin {itemize}
\item
$f (\alpha)$ lies in the ray class field modulo $c n$ over $K$ and
\item
the Frobenius map $\sigma (\m)$ acts as
\[
f (\alpha)^{\sigma (\m)} = (f \circ m M^{-1}) (M \alpha).
\]
\end {itemize}
\end {theorem}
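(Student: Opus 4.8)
The plan is to obtain the statement as the explicit, ideal-theoretic shadow of Shimura's main theorem of complex multiplication in its idelic form \cite{Shimura71}, reducing the action of the full idele class group to that of the single Frobenius $\sigma(\m)$ and then converting the resulting idele into the integral matrix $M$. Write $K_{\mathbb A}^\times$ for the idele group of $\K$. Attached to the CM point $\alpha = \alpha_1/\alpha_2$ there is a homomorphism $s \mapsto g_s$ from $K_{\mathbb A}^\times$ into $\Gl_2$ of the finite adeles, where $g_s$ is the matrix of multiplication by $s$ on $\K$ read off in the basis $(\alpha_1,\alpha_2)$; the idelic reciprocity law asserts that $f(\alpha) \in \K^{\mathrm{ab}}$ and that $f(\alpha)^{[s,\K]} = (f \circ g_s)(\alpha)$, the matrix acting on $f \in \F_n$ (Definition~\ref{def:FN}) through its class modulo $n$ via the $\Gl_2(\Z/n\Z)$-action recalled above. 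The first bullet follows by checking that $[s,\K]$ fixes $f(\alpha)$ whenever $s$ lies in the subgroup of $K_{\mathbb A}^\times$ attached to the modulus $cn$: for such $s$ the matrix $g_s$ is trivial modulo $n$ and acts trivially on both $f$ and $\zeta_n$, so $f(\alpha)$ is fixed and hence lies in the ray class field modulo $cn$.

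First I would pass from the idele to the ideal. Since $\m$ is prime to $cn$, Artin reciprocity lets me represent $\sigma(\m)$ by an idele $s$ whose components away from the primes dividing $\m$ are units and whose remaining components are local uniformizers; on the ray class field modulo $cn$ one then has $[s,\K] = \sigma(\m)$. The task is to compute $g_s$ for this choice and identify its class modulo $n$ with $mM^{-1}$.

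Next comes the geometric translation, which is the heart of the argument. By the main theorem of complex multiplication, $\sigma(\m)$ carries the CM point of the lattice $\af$ to that of $\m^{-1}\af$, and multiplication by the chosen $s$ realises this isogeny. Because $\m\overline\m = (m)$ one has $\overline\m = m\,\m^{-1}$, so $\af(\overline\m \cap \OO)$ equals (in the maximal-order case, and after intersecting with $\OO$ in general) the lattice $m\,(\m^{-1}\af)$; this is exactly the lattice whose basis the statement prescribes to be $M\begin{pmatrix}\alpha_1\\\alpha_2\end{pmatrix}$. Dividing by the scalar $m$, which does not move the associated point of $\HH$, shows that $M\alpha$ is precisely the CM point of the target lattice $\m^{-1}\af$. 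A direct local computation then identifies $g_s \bmod n$ with $mM^{-1} = \mathrm{adj}(M)$, an integral matrix of determinant $m$, hence invertible modulo $n$ since $\gcd(m,n)=1$. Its determinant $m \equiv \Norm(\m) \pmod n$ is the exponent by which $\sigma(\m)$ raises $\zeta_n$, matching the diagonal part $\begin{pmatrix}1&0\\0&m\end{pmatrix}$ of the $\Gl_2(\Z/n\Z)$-action on the cyclotomic coefficients of $f$. Evaluating $(f \circ g_s)(\alpha) = (f \circ mM^{-1})(M\alpha)$ gives the second bullet.

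The main obstacle is the convention-matching throughout this translation: keeping straight $\m$ versus $\overline\m$, the left versus right action of matrices, $s$ versus $s^{-1}$, and a lattice versus its scaled image, so that the adjugate $mM^{-1}$ emerges on the correct side with determinant exactly $m$. A secondary point is well-definedness, since $M$ is determined only up to left multiplication by $\Sl_2(\Z)$: replacing $M$ by $\gamma M$ changes the right-hand side to $(f \circ mM^{-1}\gamma^{-1})(\gamma M\alpha)$, and writing $h = f \circ mM^{-1}$ this equals $(h \circ \gamma^{-1})(\gamma\,(M\alpha)) = h(M\alpha)$ by the right-action identity, so the value is unchanged. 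Once the conventions are fixed, the remaining verifications are routine local computations.
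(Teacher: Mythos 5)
The paper itself contains no proof of Theorem~\ref{th:shimura}: it is imported verbatim as Th.~5 of \cite{Schertz02}, so the only comparison available is with the standard derivation (which is also how the cited source obtains it), namely unwinding Shimura's idelic main theorem of complex multiplication. That is exactly your strategy, and parts of your sketch are sound: your argument for the first bullet is correct (the inclusion $c\OK \subseteq \OO$ is precisely what converts triviality of $g_s$ modulo $n$ into the condition $s \equiv 1 \bmod {cn}$, i.e.\ what puts the conductor into the modulus), and so is the well-definedness check under $M \mapsto \gamma M$ with $\gamma \in \Sl_2(\Z)$.

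There are, however, two genuine gaps, and both sit exactly where you write ``routine local computation''. First, the identification $g_s \equiv m M^{-1} \pmod n$ for an idele $s$ representing $\sigma(\m)$ \emph{is} the theorem; everything around it is a quotation of Shimura. In particular, whether the conjugate ideal $\overline\m$ or $\m$ itself appears in the lattice condition is decided precisely by the $s$-versus-$s^{-1}$ normalisation (in the usual formulation $f(\alpha)^{[s^{-1},\K]} = f^{g(s)}(\alpha)$), which you explicitly postpone as ``convention-matching''. With the opposite sign one proves the same-looking statement with $\m$ in place of $\overline\m$, and that statement is false already for $f = j$: the Artin symbol of $\m$ sends $j(\af)$ to $j\bigl((\m\cap\OO)^{-1}\af\bigr)$, not to $j\bigl((\m\cap\OO)\af\bigr)$. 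So this step cannot be left as bookkeeping to be fixed at the end; it is the content, and a proof must carry it out.

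Second, your central lattice identity fails for non-maximal orders. Read as products of lattices, $\m^{-1}\af$ is an $\OK$-module, so $m\,(\m^{-1}\af) = \overline\m\,(\af\OK)$, and this agrees with $\af(\overline\m \cap \OO)$ only at primes not dividing $c$: at a prime $p \mid c$ the two localisations are $(\af\OK)_p$ and $\af_p$, which are not even homothetic, since their multiplier rings are $(\OK)_p \neq \OO_p$. Consequently ``$M\alpha$ is the CM point of $\m^{-1}\af$'' identifies the wrong point whenever $c > 1$ --- its $j$-invariant is a root of $H_\Delta$ rather than of $H_{c^2\Delta}$ --- and the parenthetical ``after intersecting with $\OO$ in general'' names this difficulty without resolving it. The repair is to stay inside proper $\OO$-ideals throughout: the map $\m \mapsto \m\cap\OO$ is multiplicative on ideals prime to $c$ and satisfies $(\m\cap\OO)(\overline\m\cap\OO) = m\OO$, whence $\af(\overline\m\cap\OO) = m \cdot \af(\m\cap\OO)^{-1}$ as proper $\OO$-ideals; equivalently, choose the representing idele $s$ to be a unit at every prime dividing $c$, so that the computation localises away from the conductor. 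Without one of these fixes, the heart of your geometric translation breaks in exactly the case (arbitrary orders) that the theorem is designed to cover.
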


In the following, we are particularly interested in
\textit {class invariants}, that is, values $f (\alpha)$ that lie not only in a ray class field, but even in a ring class field.
Using Shimura's reciprocity law, \cite[Th.~4]{Schertz02} gives a very
general criterion for class invariants, which is the basis for our
further investigations.

\begin {theorem}
\label {th:main}
Let $f \in \C_{\Gamma^0 (n)}$ for some $n \in \N$
be such that
$f$ itself and $f \circ S$ have rational $q$-expansions. Denote by
$\alpha \in
\HH$ a root of the primitive form $[A, B, C]$ of discriminant $D$ with
$\gcd (A, n) = 1$ and $n \mid C$. If $\alpha$ is not a pole of $f$, then $f
(\alpha) \in \K_D$.
\end {theorem}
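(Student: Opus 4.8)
The plan is to show that $f(\alpha)$ is fixed by the Galois subgroup that cuts out $\K_D$ inside a ray class field, using Shimura's reciprocity law in the form of Theorem~\ref{th:shimura}. First I would note that the hypotheses on the $q$-expansions of $f$ and $f \circ S$ place $f$ in $\F_n$, so that Theorem~\ref{th:shimura} applies and $f(\alpha)$ lies in the ray class field $\K_{cn}$ modulo $cn$ over $\K$. Since $\K_D \subseteq \K_{cn}$, it suffices to prove that $f(\alpha)$ is fixed by $\mathrm{Gal}(\K_{cn}/\K_D)$, which is the kernel of the natural surjection from the ray class group modulo $cn$ onto $\mathrm{Pic}(\OO)$. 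This kernel is generated by the Frobenius automorphisms $\sigma(\m)$ of the ideals $\m = \mu\OK$ that are principal with $\mu \in \OO$, $\mu \equiv a \pmod{c\OK}$ for some $a \in \Z$, and $\gcd(\Norm \m, cn) = 1$; so it is enough to check $f(\alpha)^{\sigma(\m)} = f(\alpha)$ for every such $\m$.

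Next I would set up the matrix prescribed by Theorem~\ref{th:shimura}. Writing $\theta = \frac{-B + \sqrt D}{2}$, the form $[A,B,C]$ gives $\af = \Z\theta + \Z A$ with basis quotient $\alpha = \theta/A$, and $\OO = \Z[\theta]$. Because $\gcd(\Norm\mu, c) = 1$ one has $\overline\m \cap \OO = \overline\mu\,\OO$, hence $\af(\overline\m \cap \OO) = \overline\mu\,\af$. I would therefore take $M$ to be the matrix of multiplication by $\overline\mu$ in the basis $(\theta, A)$, so that $M \begin{pmatrix}\theta\\A\end{pmatrix}$ is a basis of $\overline\mu\,\af$ as required, $\det M = \Norm\mu =: m$, and $M\alpha = \frac{\overline\mu\,\theta}{\overline\mu\,A} = \alpha$. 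Theorem~\ref{th:shimura} then yields $f(\alpha)^{\sigma(\m)} = (f \circ m M^{-1})(\alpha)$, and since $m M^{-1} = \mathrm{adj}(M)$ is the adjugate of the multiplication-by-$\overline\mu$ matrix, it equals the matrix $M_\mu$ of multiplication by $\mu$.

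The crux is the reduction of $M_\mu$ modulo $n$. Writing $\mu = x + y\theta$ with $x,y \in \Z$ and using $\theta^2 = -B\theta - AC$, one finds $M_\mu = \begin{pmatrix} x - yB & -yC \\ yA & x \end{pmatrix}$. Because $n \mid C$, this reduces modulo $n$ to the lower-triangular matrix $\begin{pmatrix} x - yB & 0 \\ yA & x \end{pmatrix}$, whose class lies in $\Gamma^0(n)$; the hypotheses $\gcd(A,n)=1$ and $\gcd(m,n)=1$ guarantee it is invertible modulo $n$. I would then factor this class in $\Gl_2(\Z/n\Z)$ as a product of two elements coming from $\Gamma^0(n)$ and one diagonal matrix $\begin{pmatrix}1&0\\0&d\end{pmatrix}$ with $d \equiv m$. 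The two $\Gamma^0(n)$ factors fix $f$ because $f \in \C_{\Gamma^0(n)}$, while the diagonal factor acts by $\zeta_n \mapsto \zeta_n^d$ and hence fixes $f$ because the $q$-expansion of $f$ is rational, which is exactly where the rationality hypotheses on $f$ and $f\circ S$ enter. Consequently $f \circ M_\mu = f$, so $f(\alpha)^{\sigma(\m)} = f(\alpha)$ for every generator $\m$, and therefore $f(\alpha) \in \K_D$.

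I expect the main obstacle to be the ideal-theoretic bookkeeping of the first two paragraphs: identifying $\mathrm{Gal}(\K_{cn}/\K_D)$ with the stated principal ideals, keeping the modulus $cn$, the conductor $c$, and the two orders $\OO \subseteq \OK$ straight, and justifying $\overline\m \cap \OO = \overline\mu\,\OO$. The matrix computation itself is short once $n \mid C$ is invoked, and it is precisely the hypotheses $\gcd(A,n)=1$ and $n \mid C$, matched to the defining congruence of $\Gamma^0(n)$, that make the entire Frobenius action collapse to the identity on $f$.
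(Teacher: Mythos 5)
Your proof is correct, and its skeleton is the one the paper relies on. Note that the paper never proves Theorem~\ref{th:main} itself (it quotes it as Th.~4 of \cite{Schertz02}), but the identical machinery is carried out in \S\ref{sct:lowerbis}: reduce to principal ideals with generator $\mu \in \OO$ prime to the modulus, feed the matrix of multiplication by $\overline\mu$ on the basis $(A\alpha, A)$ into Theorem~\ref{th:shimura}, observe that $mM^{-1}$ is the multiplication-by-$\mu$ matrix $\begin{pmatrix} x - yB & -yC \\ yA & x \end{pmatrix}$, whose upper-right entry is divisible by $n$ because $n \mid C$, and make the Frobenius act trivially via the $\Gamma^0(n)$/diagonal decomposition. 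Where you genuinely diverge is the endgame. Schertz, and the paper in \S\ref{sct:lowerbis}, factor \emph{exact integer} matrices, which forces the case distinction $p \mid u$ versus $p \mid u' = u - vB$: in the first case $mM^{-1} = M_1^{-1}\,\mathrm{diag}(1,p)$ and rationality of $f$ suffices, while in the second case the factor $\mathrm{diag}(p,1) = S\,\mathrm{diag}(1,p)\,S^{-1}$ appears and the rationality of $f \circ S$ is what kills it. You instead factor only the \emph{class} of $mM^{-1}$ in $\Gl_2(\Z/n\Z)$, say $M_\mu \equiv M_1\,\mathrm{diag}(1,m) \pmod n$ with $M_1$ lower triangular of determinant $1$ and hence liftable to $\Gamma^0(n)$; no case distinction is needed. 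This is legitimate, but it leans on the independence of the Galois action from the chosen decomposition, i.e.\ the well-definedness of the $\Gl_2(\Z/n\Z)$-action on $\F_n$ as stated in \S\ref{sct:Shimura}. Granting that, your route is shorter; what the explicit integral route buys is that it still works for functions that are \emph{not} invariant under all of $\Gamma^0(N)$, where the transformation roots of unity must be tracked matrix by matrix --- exactly what \S\ref{sct:lowerbis} needs for the lower powers of $\w_N$.

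One inaccuracy in your own bookkeeping: you assert that the diagonal step is ``exactly where the rationality hypotheses on $f$ and $f \circ S$ enter,'' but your argument only ever uses the rationality of $f$; the hypothesis on $f \circ S$ is never invoked. Indeed, in your mod-$n$ formulation it is implied by the other hypotheses: since $S\,\mathrm{diag}(1,d) = \mathrm{diag}(d,1)\,S$ and $\mathrm{diag}(d,1) = \mathrm{diag}(d,d^{-1})\,\mathrm{diag}(1,d)$ lies in the group generated by the image of $\Gamma^0(n)$ and the matrices $\mathrm{diag}(1,d)$, one gets $(f \circ S)\circ \mathrm{diag}(1,d) = \bigl(f \circ \mathrm{diag}(d,1)\bigr)\circ S = f \circ S$. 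This is not a gap, but you should state explicitly that your proof does not use that hypothesis, rather than claiming that it does.
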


The conjugates of $f (\alpha)$ are then derived generically in a form
that is well suited for computations in \cite[Prop.~3 and Th.~7]{Schertz02},
\cite[Th.~5.2.4]{Schertz09}.

\begin {theorem}
\label {th:N-system}
An \textit {$n$-system} for the discriminant $D$ is a complete system of
equivalence classes of primitive quadratic forms
$[A_i, B_i, C_i] = A_i X^2 + B_i X + C_i$,
$i = 1, \dots, h (D)$, of
discriminant $D = B_i^2 - 4 A_i C_i$, such that $\gcd (A_i, n) = 1$ and
$B_i \equiv B_1 \pmod {2 n}$. Such a system exists for any $n$. To these
quadratic forms, we associate in the following the quadratic numbers
$\alpha_i = \frac {-B_i + \sqrt D}{2 A_i}$.

Let $f \in \F_n$ be such that $f \circ S$ with
$S = \begin {pmatrix} 0 & -1 \\ 1 & 0 \end {pmatrix}$
has a rational $q$-expansion.
If $f (\alpha_1) \in \K_D$, then a complete system of conjugates of
$f (\alpha_1)$ under
the Galois group of $\K_D$ is given by the $f (\alpha_i)$, and
the characteristic polynomial of $f (\alpha_1)$ over $\K$ is
\[
H_D [f] = \prod_{i = 1}^{h (D)} (X - f (\alpha_i)).
\]
\end {theorem}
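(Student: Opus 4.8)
The plan is to establish the three assertions of the statement in turn: the existence of an $n$-system, the identification of the values $f(\alpha_i)$ as a full set of conjugates of $f(\alpha_1)$, and the resulting product formula. The existence claim I would settle by elementary reduction theory of binary quadratic forms. Starting from any set of representatives of the $h(D)$ classes of primitive forms of discriminant $D$, I would first replace each by an $\Sl_2(\Z)$-equivalent form $[A_i, B_i, C_i]$ with $\gcd(A_i, n) = 1$, which is possible because a primitive form represents some integer prime to $n$ and that value can be taken as the new leading coefficient. Fixing the first form, I would then apply to each of the others a translation $X \mapsto X + k_i$, which sends $[A_i, B_i, C_i]$ to $[A_i, B_i + 2 A_i k_i, \ast]$ and leaves $A_i$ untouched. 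Since $B_1 - B_i$ is even (all $B_i$ share the parity of $D$) and $\gcd(A_i, n) = 1$, the congruence $2 A_i k_i \equiv B_1 - B_i \pmod{2n}$ is solvable, so the middle coefficients can be aligned modulo $2n$ simultaneously; this produces the $n$-system.

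For the conjugates I would use Theorem~\ref{th:shimura}. The starting observation is that $\mathrm{Gal}(\K_D / \K)$ is canonically isomorphic to the ring class group of $\OO$, equivalently the form class group of discriminant $D$, and that it acts on $f(\alpha_1) \in \K_D$ through Frobenius automorphisms $\sigma(\m)$. To the $i$-th form I would attach the proper $\OO$-ideal $\af_i = \left[ A_i, \frac{-B_i + \sqrt D}{2} \right]$, whose basis quotient is exactly $\alpha_i$, and for each class I would choose an ideal $\m_i$ of $\OK$ of norm $m_i$ prime to $cn$ representing the class carrying $\af_1$ into the class of $\af_i$. Taking $M_i \in \Gl_2(\Z)$ of determinant $m_i$ sending the chosen basis of $\af_1$ to a basis of $\af_1 (\overline{\m_i} \cap \OO)$, Shimura reciprocity yields
\[
f(\alpha_1)^{\sigma(\m_i)} = \bigl( f \circ m_i M_i^{-1} \bigr)(M_i \alpha_1),
\]
and the whole matter reduces to showing that the right-hand side equals $f(\alpha_i)$.

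This last identification is the heart of the argument and the step I expect to be the main obstacle, because it is pure matrix bookkeeping in which two independent ambiguities must be made to cancel. First, $\af_1(\overline{\m_i} \cap \OO)$ agrees with $\af_i$ only up to a principal ideal, so the two bases differ by some $\gamma \in \Sl_2(\Z)$ and a priori $M_i \alpha_1 = \gamma \alpha_i$, forcing one to track $f \circ \gamma$. Second, $m_i M_i^{-1}$ has determinant $m_i \neq 1$, so $f \circ m_i M_i^{-1}$ is a genuine $\Gl_2(\Z/n\Z)$-conjugate of $f$ rather than $f$ itself. The two conditions defining the $n$-system are engineered precisely to reconcile these: $\gcd(A_i, n) = 1$ together with $B_i \equiv B_1 \pmod{2n}$ pin down the reduction of $m_i M_i^{-1}$ modulo $n$ relative to $\gamma$, placing the net discrepancy in the subgroup generated by translations and the diagonal matrices $\begin{pmatrix} 1 & 0 \\ 0 & d \end{pmatrix}$. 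On such matrices the action on $f$ is transparent from its $q$-expansion — translations merely scale $q^{1/n}$ by a root of unity and the diagonal matrix applies $\zeta_n \mapsto \zeta_n^d$ — and here the hypotheses $f \in \F_n$ and the rationality of the $q$-expansion of $f \circ S$ are exactly what guarantee that these operations leave the evaluated value fixed, collapsing the right-hand side to $f(\alpha_i)$. The delicate part is to make the choices of $\m_i$, $M_i$ and $\gamma$ consistently so that determinant, mod-$n$ class, and cyclotomic twist all cancel at once.

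Finally, as $\m_i$ ranges over representatives realizing the $h(D)$ distinct classes $[\af_i][\af_1]^{-1}$, the automorphisms $\sigma(\m_i)$ run exactly once through $\mathrm{Gal}(\K_D / \K)$. Hence $\{ f(\alpha_i) \}_{i=1}^{h(D)}$ is, as an indexed family, precisely the family of Galois conjugates $\{ f(\alpha_1)^{\sigma(\m_i)} \}$, each conjugate occurring with the multiplicity $[\K_D : \K(f(\alpha_1))]$ with which it is hit. Therefore the characteristic polynomial of $f(\alpha_1)$ over $\K$ is $\prod_{i=1}^{h(D)} (X - f(\alpha_i))$, which is the asserted $H_D[f]$.
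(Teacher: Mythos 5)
First, a remark on what ``the paper's own proof'' is: the paper does not prove this theorem at all --- it imports it from \cite[Prop.~3 and Th.~7]{Schertz02}. Your overall framework (Shimura reciprocity applied to proper $\OO$-ideals with basis quotients $\alpha_i$, together with surjectivity of the Artin map for the ring class field) is exactly the strategy of Schertz's proof, and your existence argument for the $n$-system --- make each form represent an integer prime to $n$, then translate to align the middle coefficients modulo $2n$, which is solvable because $\gcd(A_i,n)=1$ and all $B_i$ share the parity of $D$ --- is correct and is essentially Schertz's Proposition~3.

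However, there is a genuine gap at the point you yourself flag as the heart of the argument: the identification $(f \circ m_i M_i^{-1})(M_i \alpha_1) = f(\alpha_i)$ is never proved, only asserted to follow from the $n$-system conditions being ``engineered precisely'' for this purpose. That identification \emph{is} the mathematical content of the theorem --- everything else is standard --- and the mechanism you sketch for it is not only incomplete but partially wrong. It is not true that the residual matrices ``leave the evaluated value fixed'': for $f \in \F_n$, composing with $\begin{pmatrix} 1 & k \\ 0 & 1 \end{pmatrix}$ replaces $q^{1/n}$ by $\zeta_n^k q^{1/n}$ and genuinely changes $f$ unless $n \mid k$; it is precisely the congruence $B_i \equiv B_1 \pmod{2n}$ that makes the relevant translation exponents match up, and this must be computed, not invoked. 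Likewise, the diagonal matrix $\begin{pmatrix} 1 & 0 \\ 0 & d \end{pmatrix}$ fixes $f$ only when the relevant $q$-expansion is rational, and the hypothesis on $f \circ S$ enters through a case distinction that is absent from your sketch: writing $\pi = u + vA\alpha$ with $p = u(u - vB) + v^2AC$ and $p \mid C$, one has $p \mid u$ or $p \mid u' = u - vB$, and the matrix $M$ decomposes either as $\begin{pmatrix} p & 0 \\ 0 & 1 \end{pmatrix} M_1$, where rationality of the $q$-expansion of $f$ suffices, or as $M_2 S \begin{pmatrix} p & 0 \\ 0 & 1 \end{pmatrix} S$, where rationality of the $q$-expansion of $f \circ S$ is what saves you. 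This is exactly the computation the paper carries out in \S\ref{sct:lowerbis} for its special case $f = \w_N^e$, and it is the part of Schertz's Theorem~7 that cannot be waved through. As it stands, your text describes the obstacle accurately but does not surmount it, so the proposal is an outline of the correct proof rather than a proof.
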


%%%%% S
\section{The generalised Weber functions \texorpdfstring {$\w_N$}{wN}}
\label{sct:wN}

In this section we examine the general properties of the
function $\w_N$, with the aim in mind of applying Theorem~\ref
{th:main} to its powers.

Let $z$ be any complex number and put $q=e^{2 i \pi z}$. Dedekind's
$\eta$-function is defined by \cite{Dedekind76}
$$\eta(z) =  q^{1/24} \prod_{m \geq 1} (1 - q^m).$$
The Weber functions are \cite[\S~34, p.~114]{Weber02-III}
$$\wf(z) = \zeta_{48}^{-1}\; \frac{\eta((z+1)/2)}{\eta(z)},
\quad\wfone(z) = \frac{\eta(z/2)}{\eta(z)},
\quad\wftwo(z) = \sqrt{2}\; \frac{\eta(2z)}{\eta(z)}.$$
The modular invariant $j$ is recovered via \cite[\S~54, p.~179]{Weber02-III}:
$$j(z) = \frac{(\wf^{24}-16)^3}{\wf^{24}}
= \frac{(\wfone^{24}+16)^3}{\wfone^{24}}
= \frac{(\wftwo^{24}+16)^3}{\wftwo^{24}}.
$$
The functions $-\wf^{24}$, $\wfone^{24}$ and $\wftwo^{24}$ are
the three roots of the modular polynomial
$$\Phi_2^c(F, j) = {F}^{3}+48\,{F}^{2}+F(768-j)+4096,$$
that describes the curve $X_0(2)$.

For an integer $N>1$, let
the \textit{generalised Weber function} be defined by
$$
\w_N=\frac{\eta(z/N)}{\eta(z)}.
$$
As shown in the following, there is a canonical exponent $t$
such that $\w_N^t$ is modular for $\Gamma^0 (N)$. Its minimal polynomial
$\Phi_N^c(F, j)$ over $\C (j)$ is a model for $X_0(N)$. The other
roots of this polynomial can be expressed in terms of $\eta$, too,
a topic to which we come back in \S\ref{sct:heights}.

We need to know the behaviour of $\w_N$ under
unimodular transformations, which can be broken down to the transformation
behaviour of $\eta (z / K)$ for $K = 1$ or $N$. This has been worked out
in \cite [Th.~3]{EnSc05}.

\begin {theorem}
\label {th:transformation}
Let $M = \begin {pmatrix} a & b \\ c & d \end {pmatrix} \in \Gamma$ be
normalised such that $c \geq 0$, and $d >0$ if $c = 0$. Write $c = c_1
2^{\lambda (c)}$ with $c_1$ odd; by convention, $c_1 = \lambda (c) = 1$ if
$c = 0$. Define
\[
\varepsilon (M) = \legendre {a}{c_1} \zeta_{24}^{a b + c (d (1 - a^2) - a)
+ 3 c_1 (a - 1) + \frac {3}{2} \lambda (c) (a^2 - 1)}.
\]
For $K \in \N$ write
\[
u a + v K c = \delta = \gcd (a, Kc) = \gcd (a, K).
\]
Then
\[
\eta \left( \frac {z}{K} \right) \circ M = \varepsilon \begin {pmatrix}
\frac {a}{\delta} & -v \\ \frac {Kc}{\delta} & u \end {pmatrix} \sqrt
{\delta (c z + d)} \, \eta \left( \frac {\delta z + (u b + v K d)}{\frac
{K}{\delta}} \right),
\]
where the square root is chosen with positive real part.
\end {theorem}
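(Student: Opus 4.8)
The plan is to reduce the transformation of $\eta(z/K)$ under $M$ to the classical transformation law of $\eta$ under $\Gamma$. Since $(f \circ M)(z) = f(Mz)$, dividing the argument by $K$ amounts to composing $M$ with $\begin{pmatrix} 1 & 0 \\ 0 & K \end{pmatrix}$ on the left, so that
\[
\left( \eta\left( \frac{z}{K} \right) \circ M \right)(z)
= \eta\left( \frac{az+b}{K(cz+d)} \right)
= \eta\left( \begin{pmatrix} a & b \\ Kc & Kd \end{pmatrix} z \right),
\]
where the matrix on the right has determinant $K$. The idea is to split off from it a unimodular factor $\gamma \in \Gamma$, on which the classical law applies, leaving an upper-triangular factor that merely reshapes the argument of $\eta$ into the form claimed on the right-hand side.

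First I would note that $ad - bc = 1$ forces $\gcd(a,c) = 1$, whence $\gcd(a, Kc) = \gcd(a, K) = \delta$, so the Bézout identity $ua + vKc = \delta$ is solvable; this produces the integers $u, v$. Setting $\gamma = \begin{pmatrix} \frac{a}{\delta} & -v \\ \frac{Kc}{\delta} & u \end{pmatrix}$ one checks $\det \gamma = \frac{ua + vKc}{\delta} = 1$, so $\gamma \in \Gamma$; it is moreover already normalised in the sense of the definition of $\varepsilon$, since $\frac{Kc}{\delta} \geq 0$ and, when this entry vanishes, $M$ is a translation with $u = 1 > 0$. Computing $\gamma^{-1} \begin{pmatrix} a & b \\ Kc & Kd \end{pmatrix}$ then yields the factorisation
\[
\begin{pmatrix} a & b \\ Kc & Kd \end{pmatrix}
= \gamma \begin{pmatrix} \delta & ub + vKd \\ 0 & \frac{K}{\delta} \end{pmatrix},
\]
the vanishing of the lower-left entry being exactly the relation $ad - bc = 1$.

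Next I would apply the classical law $\eta(\gamma w) = \varepsilon(\gamma) \sqrt{c_\gamma w + d_\gamma}\, \eta(w)$, with $\varepsilon$ the explicit Dedekind multiplier of the statement and $c_\gamma, d_\gamma$ the bottom-row entries of $\gamma$, to
\[
w = \begin{pmatrix} \delta & ub + vKd \\ 0 & \frac{K}{\delta} \end{pmatrix} z
= \frac{\delta z + (ub + vKd)}{K/\delta},
\]
which is precisely the argument of $\eta$ appearing on the right-hand side. It then remains to evaluate the automorphy factor: substituting $c_\gamma = \frac{Kc}{\delta}$, $d_\gamma = u$ and using $ua + vKc = \delta$ together with $cb = ad - 1$, a short computation collapses $c_\gamma w + d_\gamma$ to exactly $\delta(cz + d)$. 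As $z \in \HH$ and $\delta, \frac{K}{\delta} > 0$ give $w \in \HH$, the quantities $c_\gamma w + d_\gamma$ and $\delta(cz+d)$ are one and the same complex number (lying in $\HH$, or in the positive reals when $c = 0$), so their principal square roots --- the branch with positive real part --- coincide and no spurious sign appears.

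The genuinely delicate ingredient is the explicit multiplier $\varepsilon$ itself. Turning the classical transformation of $\log \eta$ via Dedekind sums into the closed form with the Jacobi symbol $\legendre{a}{c_1}$ and the stated $\zeta_{24}$-exponent is the main obstacle, and the summand $\frac{3}{2} \lambda(c)(a^2 - 1)$ signals that the $2$-adic part of $c$ must be handled separately. I would cite the explicit evaluation of the eta multiplier and then confirm that the right-hand side is independent of the chosen Bézout pair: replacing $(u,v)$ by $\left( u + t\frac{Kc}{\delta},\, v - t\frac{a}{\delta} \right)$ right-multiplies $\gamma$ by $\begin{pmatrix} 1 & t \\ 0 & 1 \end{pmatrix}$ and shifts the argument of $\eta$ by $-t$, so that the resulting factors $\zeta_{24}^{t}$ and $\zeta_{24}^{-t}$ cancel.
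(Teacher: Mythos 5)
Your proposal is correct, and one should note that the paper itself gives no proof of this statement at all: it is quoted verbatim from \cite{EnSc05} (Th.~3), whose derivation proceeds exactly as you do --- write $\eta(z/K)\circ M$ as $\eta$ evaluated at $\begin{pmatrix} a & b \\ Kc & Kd \end{pmatrix}z$, factor this matrix as a normalised unimodular $\gamma = \begin{pmatrix} a/\delta & -v \\ Kc/\delta & u \end{pmatrix}$ times the upper-triangular matrix $\begin{pmatrix} \delta & ub+vKd \\ 0 & K/\delta \end{pmatrix}$, apply the classical explicit multiplier formula to $\gamma$, and collapse the automorphy factor via $c_\gamma w + d_\gamma = \delta(cz+d)$. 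Your verification of the normalisation of $\gamma$ (including the degenerate case $c=0$, where $u=1$), the coincidence of the principal square roots, and the independence of the Bézout pair are all sound; the only ingredient you leave as a citation, the explicit $\zeta_{24}$-and-Jacobi-symbol form of the eta multiplier for $\Gamma$, is precisely the classical input that the source also takes as given.
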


\begin {theorem}
\label {th:F24N}
The function $w_N$ has a rational $q$-expansion.
Denote by $S = \begin
{pmatrix} 0 & -1 \\ 1 & 0 \end {pmatrix}$ the matrix belonging to the
inversion $z \mapsto - \frac {1}{z}$. If $N$ is a square, then $\w_N \circ S$
has a rational $q$-expansion. Otherwise, $\w_N^2 \circ S$ has a rational
$q$-expansion.

Let the subscript 1 and the function $\lambda$ have the same
meaning for a positive integer $n$ as in Theorem~\ref {th:transformation},
that is, $n = n_1 \, 2^{\lambda (n)}$ with $n_1$ odd.
If $M = \begin {pmatrix} a & N b_0 \\ c & d \end {pmatrix}
\in \Gamma^0 (N),$
then $\w_N \circ M = \varepsilon \, \w_N$ with
\begin {equation}
\label {eq:epsilon}
\varepsilon =
\legendre {a}{N_1} \zeta_{24}^{(N - 1) (-b_0 a + c (d (1 - a^2)
- a))} \zeta_4^{c_1 \frac {(N_1 - 1)(a - 1)}{2}} (-1)^{\frac {\lambda (N)
(a^2 - 1)}{8}}.
\end {equation}
Let $t = \frac {24}{\gcd (N - 1, 24)}$ measure how far $N-1$ is from
being divisible by $24$, and let $e$ and $s$ be such that
$t \mid s \mid 24$ and $e \mid s$.
If $N_1$ is a square or $e$ is even, then
$\w_N^e$ is modular for $\Gamma \left( \frac {s}{e} \right)
\cap \Gamma^0 \left( \frac {s}{e} N \right)$.
Otherwise, $\w_N^e$ is modular for $\Gamma \left( \frac {s}{e} N_1 \right)
\cap \Gamma^0 \left( \frac {s}{e} N \right)$.
In both cases, $\w_N^e \in \F_{\frac {s}{e} N} \subseteq \F_{24 N}$.
\end {theorem}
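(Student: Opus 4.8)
The plan is to establish the four assertions in turn: the rational $q$-expansions come out directly, the transformation factor $\varepsilon$ is obtained by feeding $M$ into Theorem~\ref{th:transformation}, and the modularity statements reduce to deciding when $\varepsilon^e = 1$.

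\emph{The $q$-expansions.} Substituting $q^{1/N} = e^{2\pi i z/N}$ into the product for $\eta$ gives
\[
\w_N = q^{-\frac{N-1}{24N}}\prod_{m\ge 1}\frac{1 - q^{m/N}}{1 - q^m},
\]
a power series in $q^{1/N}$ with integer coefficients, so its $q$-expansion is rational. For $\w_N\circ S$ I would apply the classical inversion $\eta(-1/z) = \sqrt{-iz}\,\eta(z)$ (the case $K=1$, $M=S$ of Theorem~\ref{th:transformation}) to both $\eta(z/N)$ and $\eta(z)$; the factors $\sqrt{-iz}$ cancel up to $\sqrt N$, yielding $\w_N\circ S = \sqrt N\,\eta(Nz)/\eta(z)$. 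Since $\eta(Nz)/\eta(z)$ again has an integral $q$-expansion, $\w_N\circ S$ is rational exactly when $\sqrt N\in\Q$, i.e. when $N$ is a square; otherwise $(\w_N^2)\circ S = (\w_N\circ S)^2 = N\,\eta(Nz)^2/\eta(z)^2$ is rational.

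\emph{The factor $\varepsilon$.} Write $M = \left(\begin{smallmatrix} a & Nb_0 \\ c & d\end{smallmatrix}\right)\in\Gamma^0(N)$. From $\det M = ad - Nb_0 c = 1$ I get $\gcd(a,c) = 1$, and since $N\mid Nb_0$ also $\gcd(a,N) = 1$; hence in Theorem~\ref{th:transformation} the quantity $\delta = \gcd(a,Kc) = \gcd(a,K)$ equals $1$ for both $K=1$ and $K=N$. Choosing $u=d$ with $v=-Nb_0$ for $K=1$ and $v=-b_0$ for $K=N$, one checks that the $\eta$-argument on the right is unchanged, so
\[
\eta(z)\circ M = \varepsilon(M)\sqrt{cz+d}\,\eta(z),\qquad \eta(z/N)\circ M = \varepsilon\!\left(\begin{smallmatrix} a & b_0 \\ Nc & d\end{smallmatrix}\right)\sqrt{cz+d}\,\eta(z/N).
\]
Dividing, the square roots cancel and $\w_N\circ M = \varepsilon\,\w_N$ with $\varepsilon = \varepsilon\!\left(\begin{smallmatrix} a & b_0 \\ Nc & d\end{smallmatrix}\right)/\varepsilon(M)$. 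It remains to insert the explicit formula, using $(Nc)_1 = N_1c_1$ and $\lambda(Nc) = \lambda(N)+\lambda(c)$: the Jacobi symbols combine by multiplicativity to $\legendre{a}{N_1}$, and the difference of the two $\zeta_{24}$-exponents produces exactly the exponents of \eqref{eq:epsilon} after rewriting $\zeta_4 = \zeta_{24}^6$ and $-1 = \zeta_{24}^{12}$.

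\emph{Modularity.} Collecting roots of unity I would write $\varepsilon = \legendre{a}{N_1}\zeta_{24}^{E(M)}$ with $E(M)\in\Z$; integrality of the half-integer exponent $\tfrac32\lambda(N)(a^2-1)$ is automatic, because $\gcd(a,N)=1$ forces $a$ odd whenever $N$ is even, whence $a^2-1\equiv 0\pmod 8$. The stated groups lie in $\Gamma^0(N)$, so by the transformation law $\w_N^e$ is modular for such a group iff $\varepsilon^e = \legendre{a}{N_1}^e\zeta_{24}^{eE(M)} = 1$ for every $M$ in it. Setting $n=t/e$, the defining congruences give $a\equiv d\equiv 1$, $c\equiv 0$ and $b_0\equiv 0\pmod n$. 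Triviality of $\legendre{a}{N_1}^e$ is precisely the dichotomy of the statement: it is automatic when $e$ is even or $N_1$ is a square, and otherwise one enforces $a\equiv 1\pmod{N_1}$ by passing from level $n$ to level $nN_1$. For the $\zeta_{24}$-part one shows $eE(M)\equiv 0\pmod{24}$ term by term: the contribution of $(N-1)(-b_0 a + c(d(1-a^2)-a))$ vanishes because $b_0,c\equiv 0\pmod n$ turn the prefactor into a multiple of $e(N-1)n = t(N-1)$, which is $\equiv 0\pmod{24}$ by the definition of $t$, while the $\zeta_4$- and sign-contributions are handled via $a\equiv 1\pmod n$ together with $a^2-1\equiv 0\pmod 8$. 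Once $\varepsilon^e=1$ is established, $\w_N^e$ is modular for the displayed group; since that group contains $\Gamma(nN)$ and the $q$-expansion is rational, $\w_N^e\in\F_{nN}$, and $\F_{nN}\subseteq\F_{24N}$ because $n=t/e\mid t\mid 24$.

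The main obstacle is exactly this last, purely $2$-adic bookkeeping: eliminating the $\zeta_4^{\,\cdot}$ and $(-1)^{\,\cdot}$ factors requires tracking $v_2(t)$, $\lambda(N)$, the parity of $e$, and the fact that $a\equiv 1\pmod n$ with $a$ odd forces $(a^2-1)/2\equiv 0\pmod 8$ once $n$ is even — which is what separates the square from the non-square case and fixes the level as $n$ versus $nN_1$. Everything else is either a direct product computation or a formal manipulation of the cited transformation formula.
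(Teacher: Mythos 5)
Your proposal follows the paper's own route step for step: rationality from the $\eta$-product, the inversion giving $\w_N\circ S=\sqrt N\,\eta(Nz)/\eta(z)$, the factor $\varepsilon$ as the quotient $\varepsilon\left(\begin{smallmatrix}a&b_0\\Nc&d\end{smallmatrix}\right)\varepsilon\left(\begin{smallmatrix}a&Nb_0\\c&d\end{smallmatrix}\right)^{-1}$ of two applications of Theorem~\ref{th:transformation} (your choices of $u$, $v$, the cancellation of the $\eta$-arguments, the identities $(Nc)_1=N_1c_1$, $\lambda(Nc)=\lambda(N)+\lambda(c)$ and the multiplicativity of the Jacobi symbol are exactly what the paper uses), and the reduction of modularity to $\varepsilon^e=1$, with the symbol $\legendre{a}{N_1}$ handled by the trichotomy ($N_1$ square, $e$ even, or $a\equiv 1\pmod{N_1}$) and the $\zeta_{24}$-exponent by divisibility by $t(N-1)\equiv 0\pmod{24}$ once $\frac{t}{e}\mid b_0,c$. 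Up to that point everything is correct and matches the paper's proof.

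The flaw is in the $2$-adic endgame, which you leave as a sketch resting on a false auxiliary claim: it is not true that $a\equiv 1\pmod n$ with $n$ even forces $(a^2-1)/2\equiv 0\pmod 8$ (take $n=2$, $a=3$, or $n=4$, $a=5$); for that one needs $a\equiv 1\pmod 8$, i.e.\ $8\mid n$. The case where this matters is $N$ even with $e$ odd: there, after the parts already handled, one is left with $\varepsilon^e=(-1)^{e\left(c_1\frac{(N_1-1)(a-1)}{4}+\lambda(N)\frac{a^2-1}{8}\right)}$, and neither parity of $e$ nor the argument you use for odd $N$ applies. The repair is the paper's closing step: $N$ even forces $t\in\{8,24\}$, so an odd $e\mid t$ divides $3$ and hence $8\mid n=t/e$; then $a\equiv 1\pmod 8$, so $16\mid a^2-1$ and $16\mid (N_1-1)(a-1)$, making both exponents even. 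Separately, your final remark that this $2$-adic bookkeeping ``is what separates the square from the non-square case and fixes the level as $n$ versus $nN_1$'' conflates it with the Jacobi-symbol dichotomy: the two levels in the statement come solely from whether $\legendre{a}{N_1}^e$ can be forced to equal $1$ (which you had already identified correctly), while the $\zeta_4$/sign analysis is the same in both cases.
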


\begin {proof}
The $q$-expansion of $w_N$ is rational since that of $\eta$ is.
Let $M = \begin {pmatrix} a & b \\ c & d \end {pmatrix} \in \Gamma$.
By Theorem~\ref {th:transformation} applied to $K = 1$ and $N$, we have
\begin {equation}
\label {eq:w_N|M}
\w_N \circ M = \varepsilon \begin {pmatrix} \frac {a}{\delta} & -v \\ \frac
{Nc}{\delta} & u \end {pmatrix}
\varepsilon \begin {pmatrix} a & b \\ c & d \end {pmatrix}^{-1}
\sqrt \delta \,
\frac {\eta \left( \frac {\delta z + (ub + vNd)}{\frac {N}{\delta}}
\right)}
{\eta (z)}
\end {equation}
with $\delta = \gcd (a, N) = u a + v N c$.

In the special case $M = S$ we obtain $\delta = N$, $v = 1$, $u = 0$ and
\[
\w_N \circ S = \sqrt N \, \frac {\eta (N z)}{\eta (z)},
\]
which proves the assertion on the $q$-expansion of $\w_N \circ S$.

Assume now that $M \in \Gamma^0 (N)$. Letting $b = N b_0$, we have
$\delta = 1$, $u = d$ and $v = - b_0$ since $a d - b c = 1$.
Thus, (\ref {eq:w_N|M}) specialises as
\[
\w_N \circ M
= \varepsilon \begin {pmatrix} a & b_0 \\ Nc & d \end {pmatrix}
\varepsilon \begin {pmatrix} a & b \\ c & d \end {pmatrix}^{-1}
\frac {\eta (z / N)}{\eta (z)} \\
= \varepsilon \, \w_N (z)
\]
with
\[
\varepsilon = \legendre {a}{c_1 N_1} \legendre {a}{c_1}^{-1}
\zeta_{24}^{(b_0 - b) a + c (N - 1) (d (1 - a^2) - a) + 3 c_1 (N_1 - 1) (a
- 1) + \frac {3}{2} (\lambda (N c) - \lambda (c)) (a^2 - 1)},
\]
which proves \eqref {eq:epsilon}.

We need to examine under which conditions $\varepsilon^e = 1$.
The Legendre symbol vanishes when $N_1$ is a square, $e$ is even
or $a \equiv 1 \pmod {N_1}$. The exponent of $\zeta_{24}$ becomes
divisible by $s (N - 1)$ and thus by $24$ whenever $\frac {s}{e}$
divides $b_0$ and $c$.

In the case of odd $N$, we have $\lambda (N) = 0$ and $N = N_1$, and
the condition on $a$ implies that the exponent of $\zeta_4$ is
divisible by~$4$.

In the case of even $N$, the coefficient $a$ is odd since $\det M = 1$,
and
$$
\varepsilon^e = (-1)^{e \left( c_1 \frac {(N_1 - 1)(a - 1)}{4}
+ \lambda (N) \frac {a^2 - 1}{8} \right)}.
$$
For even $e$, there is nothing to show. If $e$ is odd, then $8 \mid t \mid s$
implies that $a \equiv 1 \pmod 8$, which finishes the proof.
\end {proof}

%%%%% S
\section{Full powers of \texorpdfstring {$\w_N$}{wN}}
\label{sct:fullpowers}

To be able to apply Theorem~\ref {th:main} directly to powers of
$\w_N$, we are interested in the minimal exponent $s$ such that $\w_N^s$
is invariant under $\Gamma^0 (N)$ and $\w_N^s \circ S$ has a rational
$q$-expansion. From Theorem~\ref {th:F24N}, we recover the integer $t
= 24/(\gcd (N - 1, 24))$ and recall that
$s = 2t$ if $t$ is odd and $N$ is not a square, and $s = t$
otherwise.

%%%%%%%%%% SS
\subsection{Arithmetical prerequisites}

We begin with the following purely arithmetical lemma.
\begin{lemma}\label{lemma:eq2}
Let $N$ be an integer. For a prime $p$, denote
by $v_p$ the $p$-adic valuation.
Let $D = c^2 \Delta$ be a discriminant with fundamental part
$\Delta$. Then $D$ admits a square root $B$ modulo $4 N$
if and only if for each prime $p$ dividing $N$, one
of the following holds.
\begin{enumerate}
\item $\legendre{\Delta}{p}=+1$;
\item $\legendre{\Delta}{p}=-1$ and $v_p(N)\leq 2 v_p(c)$;
\item $\legendre{\Delta}{p}=0$  and $v_p(N) \leq 2 v_p(c)+1$.
\end{enumerate}
\end{lemma}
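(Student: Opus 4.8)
The plan is to reduce the existence of a square root of $D$ modulo $4N$ to a collection of local conditions, one for each prime power dividing $4N$, via the Chinese remainder theorem. First I would observe that $B^2 \equiv D \pmod{4N}$ is solvable if and only if it is solvable modulo each prime power $p^{v_p(4N)}$ exactly dividing $4N$; since the conditions in the statement are indexed by primes $p \mid N$, the analysis splits naturally into the odd primes $p \mid N$, the prime $2$, and the primes dividing $4$ but not $N$. For the last category the modulus contributes only a factor already present in $4$, and I expect these to impose no constraint beyond what the standard fact ``$D \equiv 0, 1 \pmod 4$'' guarantees, so they can be dispatched quickly.

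For an odd prime $p$, the core step is to determine when $D = c^2 \Delta$ is a square modulo $p^{v_p(N)}$. Writing $v_p(c) = \gamma$ and $v_p(N) = \nu$, I would factor out $p^{2\gamma}$ from $c^2$ and reduce the question to whether $\Delta$ is a square modulo $p^{\nu - 2\gamma}$ (when $\nu > 2\gamma$), using Hensel's lemma to lift a square root modulo $p$ up to higher powers whenever $\Delta$ is a $p$-adic unit that is a quadratic residue. This is exactly where the Legendre symbol $\legendre{\Delta}{p}$ enters: if $\legendre{\Delta}{p} = +1$, then $\Delta$ is a nonzero square modulo $p$ and Hensel lifts it to all powers, so solvability holds with no restriction on $\nu$, giving case~(a); if $\legendre{\Delta}{p} = -1$, then $\Delta$ is a unit but a nonresidue, so a square root can only exist when the surplus exponent $\nu - 2\gamma$ is nonpositive, i.e. $\nu \leq 2\gamma$, giving case~(b); and if $\legendre{\Delta}{p} = 0$, i.e. $p \mid \Delta$, then because $\Delta$ is fundamental we have $v_p(\Delta) = 1$, so $v_p(D) = 2\gamma + 1$ and $D$ is a square modulo $p^\nu$ precisely when $\nu \leq 2\gamma + 1$, giving case~(c). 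The key arithmetic input here is that a fundamental discriminant has $p$-adic valuation at most $1$ at each odd prime, which pins down the exponent in the ramified case.

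The prime $2$ is the main obstacle, since squares modulo powers of $2$ behave irregularly: the multiplicative structure of $(\Z/2^k\Z)^\times$ is not cyclic for $k \geq 3$, and one must track the modulus $4N$ rather than $N$ at the prime $2$. I would handle this by carefully computing $v_2(D) = 2v_2(c) + v_2(\Delta)$ and recalling the constraints a fundamental discriminant satisfies at $2$ (namely $\Delta \equiv 1 \pmod 4$ when $\Delta$ is odd, and $\Delta \equiv 8, 12 \pmod{16}$ in the even case), then invoking the standard criterion that an integer $m$ with $v_2(m) = w$ is a square modulo $2^k$ if and only if either $w \geq k$, or $w$ is even and $m/2^w \equiv 1 \pmod{\min(k - w, 3)}$-type congruence holds. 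The bookkeeping must confirm that the factor of $4$ in the modulus combines with $v_2(N)$ so that the resulting inequalities collapse into the same three cases stated for $p = 2$ as for odd primes; I expect the extra ``$+1$'' in the modulus to be exactly what makes the ramified and inert bounds match the odd-prime pattern. I would close the argument by reassembling the local solvability statements through the Chinese remainder theorem to recover solvability modulo $4N$.
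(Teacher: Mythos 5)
Your proposal is correct and follows essentially the same route as the paper: a Chinese-remainder reduction to prime powers, Hensel lifting together with the bound $v_p(\Delta) \le 1$ at odd primes, and at $p = 2$ the classification of fundamental discriminants ($\Delta \equiv 1 \pmod 8$, $\Delta \equiv 5 \pmod 8$, or $\Delta \equiv 8, 12 \pmod{16}$) combined with the structure of squares modulo powers of $2$. The bookkeeping you defer at $p=2$ closes exactly as you expect --- the paper's conditions $v_2(c^2)+3 \ge v_2(4N)$ (ramified case) and $v_2(c^2)+2 \ge v_2(4N)$ (inert case) reduce to $v_2(N) \le 2v_2(c)+1$ and $v_2(N) \le 2v_2(c)$ respectively --- and in fact the paper itself only sketches this prime and omits the odd primes that you work out in full.
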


\begin{proof}
The Chinese remainder theorem allows to argue modulo the
different prime powers dividing $N$. The argumentation is
slightly different for $p$ odd and even, and we give some hints
only for $p=2$.

When $\Delta\equiv 1\bmod 8$, $\Delta$ admits a square root modulo any
power of 2.

When $\Delta$ is even, then $\Delta \equiv 8$ or $12 \pmod {16}$,
and $\Delta$ is a square modulo $8$, but not modulo any higher power
of~$2$. Therefore, $c^2 \Delta$ is a square modulo $4N$ if and only if
$v_2(c^2) + 3 \geq v_2(4N)$.

When $\Delta\equiv 5\bmod 8$, $\Delta$ has a square root modulo
$4$ but not modulo $8$, so that $v_2(c^2) + 2 \geq v_2(4N)$ is needed in
that case.
\end{proof}

In the following, arithmetical conditions on a prime $p$ to be
representable by the principal form of discriminant $D$ will be
needed.
We take the following form of Dirichlet's theorem from
\cite[Ch. 4]{Buell89} (alternatively, see \cite[Chap 18,
G]{Cohn78}). For an integer $p$, let $\chi_4(p) = \legendre{-1}{p}$ and
$\chi_8(p) = \legendre{2}{p}$. The {\em generic characters} of $D =
c^2 \Delta$ are defined as follows:
\begin{alphenumerate}
 \item $\legendre{p}{q}$ for all odd primes $q$ dividing $D$;
 \item if $D$ is even:
  \begin{romanenumerate}
   \item $\chi_4(p)$ if $D/4 \equiv 3, 4, 7\pmod 8$;
   \item $\chi_8(p)$ if  $D/4 \equiv 2\pmod 8$;
   \item $\chi_4(p) \cdot \chi_8(p)$ if  $D/4 \equiv 6\pmod 8$;
   \item $\chi_4(p)$ and $\chi_8(p)$ if  $D/4 \equiv 0\pmod 8$.
  \end{romanenumerate}
\end{alphenumerate}
Note that if $D$ is fundamental (i.e., $c=1$), then case (iv) cannot
occur and in case (i), we may have $D/4 \equiv 3, 7\pmod 8$ only.

\begin{theorem}\label{thm-Dirichlet}
An integer $p$ such that $\gcd(p, 2cD)=1$ is representable by some
class of forms in the principal genus of discriminant $D$ if and only
if all generic characters $\chi(p)$ have value $+1$.
In particular, this condition is necessary for representability
by the principal class.
\end{theorem}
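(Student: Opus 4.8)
The plan is to deduce the statement from the classical genus theory of primitive binary quadratic forms of discriminant $D$. Recall that the $\Sl_2(\Z)$-equivalence classes of such forms, under Gauss composition, form a finite abelian group $C(D)$, and that the generic characters listed above assemble into a group homomorphism $\Phi \colon C(D) \to \{\pm 1\}^r$. By definition the \emph{principal genus} is the genus of the principal form, i.e.\ the fibre of $\Phi$ over the identity, and since the principal form represents $1$ and every character takes the value $+1$ at $1$, a class lies in the principal genus exactly when all its character values equal $+1$. The whole statement will therefore follow once I establish two points: \textbf{(a)} the symbols $\chi(p)$ are genus invariants, whose value on a class representing $p$ is the quantity written above; and \textbf{(b)} the vanishing of all obstructions, i.e.\ all $\chi(p) = +1$, already forces $p$ to be represented by \emph{some} form of discriminant $D$.

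For (a) I would use that, because $\gcd(p, 2cD) = 1$, I may restrict to primitive representations, so that a form $f$ representing $p$ is $\Sl_2(\Z)$-equivalent to one of the shape $[p, B, C]$ with $B^2 - 4 p C = D$. Reading each generic character off this normal form rewrites it as the corresponding symbol in $p$ --- $\legendre{p}{q}$ for odd $q \mid D$, and $\chi_4(p)$, $\chi_8(p)$, or their product for the even part --- which is precisely $\chi(p)$. The genuine content is independence of the choices: if $p$ and $p'$ are represented by forms in the same genus, then $\chi(p) = \chi(p')$. This I would obtain from the relation $B^2 - 4 A C = D$, which pins down the residue of $A = p$ modulo the prime powers dividing $D$ and hence shows the symbols depend only on $p$ modulo a fixed modulus and on the genus; the case distinctions (i)--(iv) in the definition are exactly the bookkeeping needed to fix the $2$-part of this modulus.

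For (b) I would invoke the elementary representability criterion: an integer $p$ with $\gcd(p, 2D) = 1$ is primitively represented by some primitive form of discriminant $D$ if and only if $D$ is a square modulo $4p$, equivalently $\legendre{D}{p} = +1$, since then $[p, B, (B^2 - D)/(4p)]$ is such a form. It remains to note that $\legendre{D}{p} = \legendre{\Delta}{p}$ is, via quadratic reciprocity, a product of a subset of the generic characters: each $\legendre{p}{q}$ for odd $q \mid \Delta$ turns into $\legendre{q^\ast}{p}$ with $q^\ast = (-1)^{(q-1)/2} q$, and the remaining even prime-discriminant factor is supplied precisely by the $\chi_4$, $\chi_8$ contribution prescribed by $D/4 \bmod 8$. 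Thus all $\chi(p) = +1$ forces $\legendre{D}{p} = +1$, so $p$ is represented; by (a) the representing class then lies in the kernel of $\Phi$, i.e.\ in the principal genus. The final sentence of the theorem is immediate, since the principal class is one class of the principal genus.

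The main obstacle is the $2$-adic bookkeeping. The odd primes are handled routinely by reciprocity, but matching the even part --- verifying that the characters prescribed in cases (i)--(iv) are the correct genus characters and simultaneously reproduce both the well-definedness modulus in (a) and the subproduct equal to $\legendre{\Delta}{p}$ in (b) --- must be checked separately for each residue of $D/4$ modulo $8$, and the non-fundamental factor $c$ (already visible through the $v_p(c)$ in Lemma~\ref{lemma:eq2}) has to be carried along throughout. This structural input --- that the generic characters separate the genera and cut out the principal genus as their common kernel --- is exactly what I would quote from \cite[Ch.~4]{Buell89} or \cite[Chap.~18]{Cohn78} rather than re-derive.
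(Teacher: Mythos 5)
The paper offers no proof of this theorem at all: it is quoted as Dirichlet's theorem from \cite[Ch.~4]{Buell89} (alternatively \cite[Chap.~18]{Cohn78}), so there is no internal argument to compare yours against. Your reconstruction is the standard genus-theory derivation, and it is sound in outline; moreover its structural core --- that the generic characters are well defined on classes, multiplicative under composition, and cut out the principal genus as their common kernel --- is exactly what you yourself defer to the same two references, so in substance you and the authors lean on the same source. What you add beyond the citation is the reduction in your part (b): since $\gcd(p,c)=1$ one has $\legendre{D}{p}=\legendre{c^2\Delta}{p}=\legendre{\Delta}{p}$, and via the prime-discriminant factorisation of $\Delta$ and quadratic reciprocity this Jacobi symbol is a subproduct of the listed generic characters (the even part works out in every case of the table: the character attached to $D/4 \bmod 8$ always contains whichever of $\legendre{-4}{p}=\chi_4(p)$, $\legendre{8}{p}=\chi_8(p)$, $\legendre{-8}{p}=\chi_4(p)\chi_8(p)$ occurs in $\Delta$, while the extra symbols $\legendre{p}{q}$ for $q\mid c$ are simply not needed). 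Hence triviality of all characters forces representability, and part (a) then places the representing class in the principal genus. This is the correct classical argument.

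One caveat you should make explicit: your step ``$D$ is a square modulo $4p$, equivalently $\legendre{D}{p}=+1$'' is valid only for $p$ prime; for composite $p$ a Jacobi symbol $+1$ produces no square root, and indeed the ``if'' direction of the theorem as literally stated for an ``integer $p$'' is false. For instance with $D=-4$ the only generic character is $\chi_4$, and $\chi_4(21)=+1$, yet $21$ is not represented by the principal form $x^2+y^2$. This is a defect of the statement rather than of your argument --- the paper only ever applies the theorem to $p=\Norm(\pi)$ prime, and Buell's version is for primes --- but since your proof hinges on primality at precisely this step, a blind proof ought to flag the hypothesis rather than use it silently.
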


%%%%%%%%%% SS
\subsection{Class invariants}

\begin {theorem}
\label {th:full}
Let $N$ be an integer and $t = \frac {24}{\gcd (N - 1, 24)}$. If $t$ is
odd and $N$ is not a square, let $s = 2t$, otherwise, let $s = t$.
Suppose $D$ satisfies Lemma \ref{lemma:eq2}.
Consider an $N$-system of forms $[A_i, B_i, C_i]$ with roots
$\alpha_i = \frac {- B_i + \sqrt D}{2 A_i}$
such that $B_i \equiv B \pmod {2 N}$, as introduced in
\ref{th:N-system}. Then the singular values $\w_N^s (\alpha_i)$ lie
in the ring class field $K_D$, and they form a complete set of Galois
conjugates.
\end {theorem}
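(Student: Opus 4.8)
The plan is to check the hypotheses of Theorems~\ref{th:main} and~\ref{th:N-system} for the function $f=\w_N^s$ and then read off the two assertions in turn. First I would record the analytic and modular input. By the choice of $s$ made at the beginning of this section together with Theorem~\ref{th:F24N}, the power $\w_N^t$ lies in $\F_N$ and $\w_N^s$ is invariant under $\Gamma^0(N)$ whenever $s=t$; in the remaining case $s=2t$ one has $\w_N^{2t}=(\w_N^t)^2\in\F_N$ as well, since $\F_N$ is a field. In all cases $\w_N^s$ is invariant under $\Gamma^0(N)$, so $\w_N^s\in\C_{\Gamma^0(N)}$, and both $\w_N^s$ and $\w_N^s\circ S$ have rational $q$-expansions. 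Moreover, since $\eta$ is holomorphic and nowhere vanishing on $\HH$, the quotient $\w_N=\eta(z/N)/\eta(z)$ is holomorphic and nonzero on $\HH$; hence $\w_N^s$ has no pole on $\HH$, and in particular none of the $\alpha_i$ is a pole.

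Next I would exhibit a form fulfilling the divisibility condition of Theorem~\ref{th:main} with $n=N$. This is where the assumption that $D$ satisfies Lemma~\ref{lemma:eq2} enters: it provides a square root $B$ of $D$ modulo $4N$. One then selects the $N$-system (which exists by Theorem~\ref{th:N-system}) so that $B_i\equiv B\pmod{2N}$. Because $B_i\equiv B_1\pmod{2N}$ is built into the definition of an $N$-system, this single congruence propagates to all indices: $B_i^2\equiv B^2\equiv D\pmod{4N}$, whence $4A_iC_i=B_i^2-D\equiv0\pmod{4N}$, and since $\gcd(A_i,N)=1$ we conclude $N\mid C_i$ for every $i$. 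In particular $[A_1,B_1,C_1]$ is a primitive form of discriminant $D$ with $\gcd(A_1,N)=1$ and $N\mid C_1$.

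With this in hand, Theorem~\ref{th:main}, applied to $f=\w_N^s\in\C_{\Gamma^0(N)}$ and $\alpha=\alpha_1$, shows that $\w_N^s(\alpha_1)\in\K_D$. Finally, since $\w_N^s\in\F_N$, its transform $\w_N^s\circ S$ has a rational $q$-expansion, and $\w_N^s(\alpha_1)\in\K_D$, Theorem~\ref{th:N-system} applies to the very same $N$-system and yields that the values $\w_N^s(\alpha_i)$, $i=1,\dots,h(D)$, form a complete set of conjugates of $\w_N^s(\alpha_1)$ over $\K$. This is the second assertion.

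Most of the difficulty is absorbed by the quoted results, so the only genuinely new step is the arithmetic translation of the middle paragraph: converting the solvability statement of Lemma~\ref{lemma:eq2} into an $N$-system all of whose forms satisfy $N\mid C_i$ while retaining $\gcd(A_i,N)=1$. The point to watch is that the congruence $B_i\equiv B_1\pmod{2N}$ intrinsic to an $N$-system is exactly what lets the single square-root condition $B^2\equiv D\pmod{4N}$ control every $C_i$ at once; the remaining verifications that the hypotheses of Theorems~\ref{th:main} and~\ref{th:N-system} hold with $n=N$ are then routine.
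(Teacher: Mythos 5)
Your proposal is correct and follows essentially the same route as the paper: both reduce the statement to Lemma~\ref{lemma:eq2} (existence of the square root $B$ of $D$ modulo $4N$) combined with Theorems~\ref{th:F24N}, \ref{th:main} and~\ref{th:N-system}, with the choice of $s$ guaranteeing $\Gamma^0(N)$-invariance and rationality of the $q$-expansions of $\w_N^s$ and $\w_N^s \circ S$. The only cosmetic difference is that the paper exhibits the single form $[1,B,(B^2-D)/4]$ with $N \mid C$ to feed into Theorem~\ref{th:main}, whereas you propagate the congruence $B_i \equiv B \pmod{2N}$ to deduce $N \mid C_i$ for every form of the system; both verifications are routine and equivalent.
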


\begin {proof}
Once the existence of $B$ is verified, the form $[1, B, C]$ with
$C = \frac {B^2 - D}{4}$ is of discriminant $D$ and satisfies $N \mid C$.
The assertion of the theorem is then a direct
consequence of Theorems~\ref {th:main} and~\ref {th:F24N}.
\end {proof}

Sometimes, the characteristic polynomial of $\w_N^s$ is real, so that its
coefficients lie in $\Z$ instead of the ring of integers of $\Q (\sqrt D)$.
It is then interesting to determine the pairs of quadratic forms that lead to
complex conjugates.

\begin {theorem}
\label {th:reality}
Under the assumptions of Theorem~\ref {th:full}, let $B \equiv 0 \pmod N$,
which is possible whenever $N$ is odd and $N \mid D$,
or $N$ is even and $4 N \mid D$. Then the characteristic polynomial of $\w_N^s$
is real. More precisely, if $\alpha_i$ and
$\alpha_j$ are roots of inverse forms of the $N$-system, then
$\w_N^s (\alpha_j) = \overline {\w_N^s (\alpha_i)}$.
\end {theorem}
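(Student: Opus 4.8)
The plan is to exploit the rationality of the $q$-expansion of $\eta$, which forces a clean behaviour of $\w_N^s$ under complex conjugation, and then to identify $-\overline{\alpha_i}$ with the root of the inverse form. First I would record the relevant functional equation. Since $\eta(z) = q^{1/24}\prod_{m\geq 1}(1-q^m)$ has rational, hence real, coefficients when expanded in powers of $q^{1/24}$, and $q = e^{2\pi i z}$ satisfies $\overline{q} = e^{2\pi i(-\overline z)}$, one obtains $\overline{\eta(z)} = \eta(-\overline z)$ as an identity on $\HH$. Taking the quotient gives $\overline{\w_N(z)} = \overline{\eta(z/N)}/\overline{\eta(z)} = \eta(-\overline z/N)/\eta(-\overline z) = \w_N(-\overline z)$, and therefore $\overline{\w_N^s(z)} = \w_N^s(-\overline z)$ for every $z\in\HH$. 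Note that this requires no normalisation, since we evaluate $\w_N^s$ directly rather than through its transform under $S$.

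Next I would identify the point $-\overline{\alpha_i}$. Writing $\alpha_i = \frac{-B_i+\sqrt D}{2A_i}$ with $D<0$, so that $\overline{\sqrt D} = -\sqrt D$, we have $-\overline{\alpha_i} = \frac{B_i + \sqrt D}{2A_i}$, which is precisely the root of the opposite form $[A_i, -B_i, C_i]$. The opposite form represents the inverse class, has the same discriminant $D$ and the same leading coefficient $A_i$ coprime to $N$. The hypothesis $B\equiv 0\pmod N$ enters here: since $B_i \equiv B_1 \equiv 0 \pmod N$, we get $-B_i \equiv B_i \equiv B_1 \pmod{2N}$, so that $[A_i,-B_i,C_i]$ again satisfies the $N$-system congruence on its middle coefficient. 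Hence the $N$-system may be chosen stable under the involution $[A,B,C]\mapsto[A,-B,C]$, and then $[A_i,-B_i,C_i]$ is the representative $[A_j,B_j,C_j]$ of the inverse class, with root $\alpha_j = -\overline{\alpha_i}$.

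Combining the two steps yields $\overline{\w_N^s(\alpha_i)} = \w_N^s(-\overline{\alpha_i}) = \w_N^s(\alpha_j)$, which is the refined assertion. The reality of the characteristic polynomial then follows formally: by Theorem~\ref{th:N-system} its roots are exactly the $\w_N^s(\alpha_i)$, and the involution $i\mapsto j$ shows that this multiset is stable under complex conjugation, with ambiguous forms (equal to their own inverse) contributing real roots. Consequently every elementary symmetric function of the roots is fixed by conjugation, so $H_D[\w_N^s]$ has real coefficients; being moreover the characteristic polynomial over $\K$ of an algebraic integer, these coefficients lie in $\K\cap\R=\Q$ and are integral, hence in $\Z$.

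The main obstacle I expect is the bookkeeping around the $N$-system in the second step: one must verify that the opposite form is not merely equivalent to some $N$-system form but can be taken as a genuine representative satisfying all the congruence conditions, and that this is exactly what $B\equiv 0\pmod N$ secures. The side claim that such a $B$ exists when $N$ is odd with $N\mid D$, or $N$ is even with $4N\mid D$, is a short direct verification: in the even case $B=0$ already solves $B^2\equiv D\pmod{4N}$ together with $B\equiv 0\pmod N$, while in the odd case one takes $B\equiv 0\pmod N$ and adjusts it modulo $4$ to match $D\bmod 4$.
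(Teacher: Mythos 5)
Your proposal is correct and takes essentially the same approach as the paper: the hypothesis $B \equiv 0 \pmod N$ makes the $N$-system stable under the involution $[A_i,B_i,C_i] \mapsto [A_i,-B_i,C_i]$, whose root is $-\overline{\alpha_i}$, and the reality of the $q$-expansion coefficients yields $\overline{\w_N^s(\alpha_i)} = \w_N^s(-\overline{\alpha_i}) = \w_N^s(\alpha_j)$. The only cosmetic difference is that you derive the conjugation identity from $\eta$ itself rather than directly from the rational $q$-expansion of $\w_N$, which is the same idea.
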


\begin {proof}
Notice that $B \equiv 0 \pmod N$ and $B_i \equiv B \pmod {2 N}$
imply $-B_i \equiv B \pmod {2 N}$, so that $[A_i, -B_i, C_i]$,
the inverse form of $[A_i, B_i, C_i]$, satisfies the $N$-system
constraint; thus
$
\w_N^s (\alpha_j)
= \w_N^s \left( \frac {B_i + \sqrt {D}}{2 A_i} \right)
= \w_N^s (\overline {- \alpha_i}).
$
On the other hand, $q (\overline {- \alpha_i}) = \overline {q (\alpha_i)}$,
which implies $\w_N (\overline {-\alpha_i}) = \overline {\w_N (\alpha_i)}$
since $\w_N$ has a rational $q$-expansion.
\end {proof}

These first results, direct consequences of the Shimura reciprocity law, are
meant to set the stage for the detailed and much more involved analysis of
lower powers in the following chapters.
For $\gcd (N, 6) = 1$, \cite [Theorem~20]{HaVi97} determines a
$48$-th root of unity~$\zeta$ and an exponent $e \mid s$
such that $\zeta \w_N^e$ yields a class invariant. With a bit of work,
it can be shown that $\zeta^{s / e} = 1$ in our context, which provides
an alternative proof of Theorem~\ref {th:full} without giving the algebraic
conjugates of the singular value.

%%%%% S
\section{Explicit Galois action}
\label{sct:lowerbis}

Throughout the remainder of this section, we assume that $N$ is a
square or $e$ is even, so that $f = \w_N^e$ and $f \circ S$ have
rational $q$-expansions by Theorem~\ref {th:F24N}. Let $\alpha$ be
a root of the primitive quadratic form $[A, B, C]$ of discriminant
$D$ with $\gcd (A, N) = 1$.
By Theorems~\ref {th:F24N} and~\ref {th:shimura}, the singular
value $f (\alpha)$ lies in the ray class field modulo
$c \frac {t}{e} N$ over $\K$, and the Galois action of ideals
in $\OK$ can be computed explicitly. We eventually need to show that
the action of principal prime ideals generated by elements in
$\OO$ is trivial, which implies that the singular value lies
in the ring class field $\K_D$. Then Theorems~\ref {th:F24N}
and~\ref{th:N-system} show that the conjugates
are given by the singular values in a $\frac {t}{e}N$-system.

We are only interested in the situation that $N \mid C$. Notice
that under $\gcd (A, N) = 1$ this is equivalent to
$4 N \mid 4 A C = B^2 - D$, or $B^2 \equiv D \pmod {4 N}$.
The remainder of this section is devoted to computing in
this case
the Galois action of principal prime ideals $(\pi)$ with
$\pi \in \OO$ coprime to $6 c N$ on the
singular values according to the arithmetic properties of $N$ and
$D$. \S\ref {sct:specialization} applies
these results to the determination of class invariants.

To apply Shimura reciprocity in the formulation of
Theorem~\ref {th:shimura}, we need to explicitly write down adapted
bases for the different ideals. So let
$\af = \begin {pmatrix} A \alpha \\ A \end {pmatrix}_\Z$ be an ideal of
$\OO = \begin {pmatrix} A \alpha \\ 1 \end {pmatrix}_\Z$ with basis quotient
$\alpha$.
Without loss of generality, we may assume that $p = \Norm (\pi) \mid C$
by suitably modifying
$\alpha$: Indeed, notice that the quadratic form associated to
$\alpha' = \alpha - 24 k N$ for some $k \in \Z$ is given by
$[A, B', C'] = [A, B + 2 A (24 k N), A (24 k N)^2 + B (24 k N) + C]$.
This form still satisfies $N \mid C'$, and furthermore
$f (\alpha') = f (\alpha)$ since $f$ is invariant under translations
by $24 N$ according to Theorem~\ref {th:F24N}.
Since $p$ splits in $\OO$ and is prime to $c$, the equation $A X^2 + B X + C$
has a root $x$ modulo $p$. Choosing $k \in \Z$ such that
$k \equiv x (24 N)^{-1} \pmod p$, which is
possible since $p \nmid 6 N$, we obtain $p \mid C'$.

Let $\pi = u + v A \alpha$ with $u$, $v \in \Z$. From
\begin{equation}
\label{eq:cond}
p = \Norm (\pi) = u (u - v B) + v^2 A C
\end{equation}
and $p \mid C$ we deduce that $p$ divides $u$ or $u' = u - v B$.
Using $A \overline \alpha = - A \alpha - B$ and
$N (A \alpha) = AC$, we compute
\[
\overline \p \af = \overline \pi \begin {pmatrix} A \alpha \\ A \end {pmatrix}
= \begin {pmatrix} u A \alpha + v A C \\ u A - v A^2 \alpha - v A B \end {pmatrix}
= \begin {pmatrix} u & v C \\ - v A & u - v B \end {pmatrix}
\begin {pmatrix} A \alpha \\ A \end {pmatrix}
\]
So if $p \mid u$, the matrix $M$ of Theorem~\ref {th:shimura} is given by
\[
M = \begin {pmatrix} u & v C \\ - v A & u - v B \end {pmatrix}
=
\begin {pmatrix} p & 0 \\ 0 & 1 \end {pmatrix}
M_1
\text { with }
M_1 =
\begin {pmatrix} \frac {u}{p} & v \frac {C}{p} \\ - v A & u' \end {pmatrix}
\in \Gamma^0 (N)
\]
since $N \mid C$ and $p \nmid N$.

If $f$ is invariant under $M_1^{-1}$, the rationality of its
$q$-expansion implies that
\[
f \circ m M^{-1}
= f \circ M_1^{-1} \circ \begin {pmatrix} 1 & 0 \\ 0 & p \end {pmatrix}
= f,
\]
so that
\[
f (\alpha)^{\sigma (\p)}
= f (M \alpha)
= f \left( \frac {u \alpha + v C}{- v A \alpha + u - v B} \right)
= f \left( \frac {\overline \pi \alpha}{\overline \pi} \right)
= f (\alpha).
\]

For $p \mid u'$, we decompose in a similar manner
\[
M = M_2 \begin {pmatrix} 1 & 0 \\ 0 & p \end {pmatrix}
= M_2 S \begin {pmatrix} p & 0 \\ 0 & 1 \end {pmatrix} S
\text { with }
M_2 = \begin {pmatrix} u & v \frac {C}{p} \\
-v A & \frac {u'}{p} \end {pmatrix}
\in \Gamma^0 (N),
\]
and the rationality of the $q$-expansion of $f \circ S$ allows
to conclude if $f$ is invariant under $M_2^{-1}$.

So we need the transformation of $f$ under
$$M_1^{-1} = \begin {pmatrix} u' & -v \frac {C}{p} \\
v A & \frac {u}{p} \end {pmatrix}.$$
Rewriting \eqref {eq:epsilon}, it is given by
$f \circ M_1^{-1} = \zeta_{24}^{e \theta} f$ with
\begin{equation}\label{eq:theta}
\begin {split}
\theta =
& (N-1) v \left(u' \frac{C}{Np} + A \left(\frac{u}{p} (1-{u}'^2)
- u'\right)\right) \\
& + 3 v_1 A_1 (N_1-1) (u'-1) + \frac {3 \lambda (N)
({u'}^2-1)}{2}.
\end {split}
\end{equation}
We obtain invariance provided $e \theta\equiv 0\bmod 24$. (The
treatment of $M_2^{-1}$ is completely analogous and omitted.)
In the following, we classify the
values of $D$ and $B$ for which $\theta$ is $0$
modulo some divisor of $24$.
It is natural to study separately
$\theta\bmod 3$ and $\theta\bmod 2^{\xi}$ for $1\leq \xi \leq 3$
depending on the value of $N$. We will give code names to the
following propositions for future use.

%%%%%%%%%% SS
\subsection{The value of \texorpdfstring {$\theta$}{theta} modulo
\texorpdfstring {$3$}{3}}
\label {ssct:theta3}

To be able to use some exponent $e$ not divisible by~$3$, we
need to impose $3 \mid \theta$. From the reduction of (\ref{eq:theta})
modulo $3$, namely
$$\theta = (N-1) v \left(u' \frac{C}{Np} + A \left(\frac{u}{p} (1-{u}'^2)
- u'\right)\right) \bmod 3,$$
we immediately see that $3 \mid \theta$ for $N\equiv 1\bmod 3$
without any further condition, which is coherent with $3 \nmid s$ in this
case.

For $N \not\equiv 1 \pmod 3$, we impose $B^2 \equiv D \pmod {4N}$ to
obtain divisibility of $C$ by $N$ (see the discussion above), and
define $r \in \{ 0, 1, 2 \}$ such that
\begin {equation}
\label {eq:r3}
A \frac {C}{N} = \frac {B^2 - D}{4 N} \equiv r \pmod 3.
\end {equation}
Notice that $r = 1$ implies $A \equiv \frac {C}{N} \pmod 3$,
while $r = 2$ implies $A \equiv - \frac {C}{N} \pmod 3$.

%%%%%%%%%%%%%%% SSS
\subsubsection{The case \texorpdfstring {$N\equiv 0\bmod 3$}{N=0 mod 3}}

\begin{proposition}[PROP30]\label{prop:Neq0mod3}
Let $N \equiv 0 \pmod 3$, $B^2 \equiv D \pmod {4 N}$
and $r$ as in \eqref {eq:r3}.
Then $3 \mid \theta$ if
\begin {alphenumerate}
\item
$3 \mid D$ and $r = 1$;
\item
$D \equiv 1 \pmod 3$ and $r=2$.
\end {alphenumerate}
In these cases, $B$ satisfies the following congruences modulo~$3$:
\begin {alphenumerate}
\item
$3 \mid B$;
\item
$3 \nmid B$.
\end {alphenumerate}
\end{proposition}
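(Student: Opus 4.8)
The plan is to work entirely modulo $3$ and exploit the reduced formula for $\theta$ already displayed in the excerpt, namely
\[
\theta \equiv (N-1) v \left(u' \frac{C}{Np} + A \left(\frac{u}{p} (1-{u'}^2) - u'\right)\right) \pmod 3.
\]
Since $N \equiv 0 \pmod 3$, we have $N - 1 \equiv -1 \pmod 3$, so this factor is a unit modulo $3$ and can be discarded; the divisibility $3 \mid \theta$ is therefore equivalent to the vanishing of the bracketed expression. First I would recall that $p = \Norm(\pi)$ is coprime to $6cN$, so $p$ is invertible modulo $3$ and in particular $3 \nmid p$; moreover, by the normalisation carried out before \eqref{eq:cond}, we have $p \mid C$, and since $3 \mid N$ while $3 \nmid p$, the quantity $\frac{C}{Np}$ is a well-defined integer to be reduced modulo $3$. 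The key arithmetic input is the relation $A \frac{C}{N} \equiv r \pmod 3$ from \eqref{eq:r3}, together with $D = B^2 - 4AC$, which lets me translate hypotheses on $D \bmod 3$ into information on $A$, $C/N$, and ultimately on $B \bmod 3$.

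The main body of the argument is a case split on whether $3 \mid u$ or $3 \mid u'$, paralleling the two Shimura-reciprocity decompositions (the $M_1$ and $M_2$ cases) in the surrounding text. In each case one of $u, u'$ is divisible by $3$, and from $p = u(u - vB) + v^2 AC = u u' + v^2 A C$ together with $3 \nmid p$ one deduces that the other of $u, u'$ is a unit modulo $3$ and that $3 \nmid v$. I would then substitute $u \equiv 0$ (resp.\ $u' \equiv 0$) into the bracket and simplify. When $3 \mid u$ the term $\frac{u}{p}(1 - {u'}^2)$ drops out, leaving $u'\!\left(\frac{C}{Np} - A\right)$ up to the unit $v$; I would rewrite $\frac{C}{Np}$ via $\frac{C}{N} = A^{-1} r \pmod 3$ (legitimate since $\gcd(A,N)=1$ forces $3 \nmid A$ when $3 \mid C/N$ is false, which I must track) and reduce the vanishing condition to a congruence purely in terms of $r$. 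The symmetric case $3 \mid u'$ kills $u' \frac{C}{Np}$ and $A u'$, leaving $A \frac{u}{p}(1 - {u'}^2) \equiv A \frac{u}{p}$, whose vanishing again reduces to a condition on $r$ and $A$.

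The decisive step — and the one I expect to be the main obstacle — is assembling these local conditions into the clean dichotomy of the statement: that $3 \mid D$ forces $r = 1$ (with $3 \mid B$), while $D \equiv 1 \pmod 3$ forces $r = 2$ (with $3 \nmid B$), and that no other residue of $D$ modulo $3$ yields a consistent solution. Here I would use $D \equiv B^2 - 4AC \equiv B^2 - AC \pmod 3$ and $AC = N \cdot A\frac{C}{N} \equiv 0 \pmod 3$ (since $3 \mid N$), so that $D \equiv B^2 \pmod 3$; this single congruence simultaneously pins down $D \bmod 3 \in \{0,1\}$ (a square modulo $3$), ties $3 \mid D$ to $3 \mid B$ and $D \equiv 1$ to $3 \nmid B$, and must be cross-checked for compatibility with the value of $r$ produced by the case analysis above. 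The delicate point is verifying that the two independent derivations — one from forcing $3 \mid \theta$, the other from the identity $D \equiv B^2$ — agree and jointly exclude $D \equiv 2 \pmod 3$, which is exactly what makes the stated correspondence between conditions (a),(b) on $(D,r)$ and conditions (a),(b) on $B \bmod 3$ precise rather than merely sufficient.
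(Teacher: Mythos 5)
Your reduction modulo $3$ and your observation that $D \equiv B^2 \pmod 3$ (valid because $3 \mid N \mid C$ kills the $4AC$ term) are fine, but the core of your plan rests on a case split that cannot occur. You propose to distinguish ``$3 \mid u$'' from ``$3 \mid u'$'', transposing the $M_1$/$M_2$ dichotomy of the surrounding text; but that dichotomy concerns divisibility by $p$, not by $3$. In the present situation $3 \mid N \mid C$, so the norm equation \eqref{eq:cond} reads $p = uu' + v^2AC \equiv uu' \pmod 3$, and $3 \nmid p$ therefore forces \emph{both} $u$ and $u'$ to be units modulo $3$. Each branch of your split is vacuous (for instance $3 \mid u$ would give $p \equiv v^2AC \equiv 0 \pmod 3$, a contradiction --- your claim that it merely forces ``the other of $u,u'$ to be a unit and $3 \nmid v$'' overlooks $3 \mid C$), and the one case that actually occurs, namely $u$ and $u'$ both prime to $3$, is never treated in your plan.

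The paper's proof exploits exactly that fact: since ${u'}^2 \equiv 1 \pmod 3$, the middle term $\frac{u}{p}(1-{u'}^2)$ drops out of \eqref{eq:theta}, leaving $\theta \equiv \pm v\left(\frac{C}{Np} - A\right) \pmod 3$. The genuine case distinction is then on $B \bmod 3$ (equivalently $D \bmod 3$), which pins down $p \bmod 3$ through $p \equiv uu'$: if $3 \mid B$ then $u' \equiv u$ and $p \equiv u^2 \equiv 1 \pmod 3$, so $\frac{C}{Np} \equiv \frac{C}{N}$, and the hypothesis $r = 1$ (i.e.\ $A \equiv \frac{C}{N} \pmod 3$) gives $3 \mid \theta$; if $3 \nmid B$ one may assume $3 \nmid v$, whence $u' \not\equiv u$, $p \equiv 2 \pmod 3$, $\frac{C}{Np} \equiv -\frac{C}{N}$, and $r = 2$ (i.e.\ $A \equiv -\frac{C}{N} \pmod 3$) concludes. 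Note also a secondary misreading: the conditions on $r$ in the statement are hypotheses, not consequences to be derived. Given $3 \mid D$, the value of $r$ is not forced to be $1$ --- the remark following PROP32 exhibits precisely the situation $3 \mid D$, $r = 0$, where divisibility fails --- so your hope of deducing the $(D,r)$ dichotomy from $D \equiv B^2 \pmod 3$ alone would not have succeeded; only the correspondence between $D \bmod 3$ and $B \bmod 3$ comes for free.
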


\begin{proof}
Since $3 \mid N \mid C$ and $3 \nmid p$,
$u^2 \equiv {u'}^2\equiv 1\pmod 3$ by \eqref {eq:cond} and
\[
\theta \equiv \pm v \left(\frac{C}{Np} - A\right) \bmod 3.
\]

\begin {alphenumerate}
\item
If $3 \mid B$, or equivalently $3 \mid D$, then $p\equiv u^2 \equiv 1\pmod 3$
in \eqref{eq:cond}. The desired result follows from~\eqref {eq:r3}.
\item
If $3 \nmid B$, which is equivalent with $D \equiv 1 \pmod 3$,
only the case $3 \nmid v$ needs to be examined. Then
$u \not\equiv u' \pmod 3$
and $p \equiv 2 \pmod 3$, and again \eqref {eq:r3} allows to
conclude.
\end {alphenumerate}
\end{proof}

%%%%%%%%%%%%%%% SSS
\subsubsection{The case \texorpdfstring {$N\equiv 2\bmod 3$}{N=2 mod 3}}

\begin{proposition}[PROP32]\label{prop:Neq2mod3}
Let $N \equiv 2 \pmod 3$, $B^2 \equiv D \pmod {4 N}$
and $r \in \{1, 2\}$ as in \eqref {eq:r3}.
If $D \equiv r \pmod 3$, then $3 \mid \theta$ and $3 \mid B$.
\end{proposition}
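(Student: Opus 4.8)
The plan is to treat the two conclusions separately, first establishing $3 \mid B$ and then using this to force $3 \mid \theta$. Throughout I will exploit that $r \in \{1,2\}$ is nonzero, which together with $A \frac{C}{N} \equiv r \pmod 3$ from \eqref{eq:r3} forces $3 \nmid A$ (and hence $A^{-1} \equiv A \pmod 3$).

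For $3 \mid B$ I would start from the discriminant identity $B^2 - D = 4 A C$ and reduce modulo $3$. Since $N \equiv 2 \pmod 3$, I write $AC = N \cdot A\frac{C}{N} \equiv 2r \pmod 3$, whence $B^2 \equiv D + 2r \pmod 3$. Feeding in the hypothesis $D \equiv r \pmod 3$ gives $B^2 \equiv 3r \equiv 0 \pmod 3$, and since $3$ is prime this yields $3 \mid B$. This congruence is visibly unchanged by the admissible translation $\alpha \mapsto \alpha - 24kN$ used to arrange $p \mid C$, as that alters neither $B$ nor $A\frac{C}{N}$ modulo $3$, so I may assume $p \mid C$ in what follows.

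For $3 \mid \theta$ I would invoke the reduction of \eqref{eq:theta} modulo $3$ recorded in Section~\ref{ssct:theta3}, together with $N - 1 \equiv 1 \pmod 3$, to reduce to showing $v E \equiv 0 \pmod 3$, where $E = u' \frac{C}{Np} + A\left(\frac{u}{p}(1 - u'^2) - u'\right)$ and $Np \mid C$ makes $\frac{C}{Np} \in \Z$. The established $3 \mid B$ gives $u' \equiv u \pmod 3$. If $3 \mid v$ there is nothing to prove; if $3 \mid u$ then also $3 \mid u'$ and $3 \mid \frac{u}{p}$ (as $3 \nmid p$), so every term of $E$ vanishes. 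This isolates the single case $3 \nmid uv$, in which $u'^2 \equiv u^2 \equiv 1$ kills the middle term and leaves $E \equiv u'\left(\frac{C}{Np} - A\right) \pmod 3$.

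The main obstacle is precisely this remaining case, and the plan is to resolve it through the norm equation \eqref{eq:cond}. Reducing $p = u u' + v^2 AC$ modulo $3$ and using $u u' \equiv u^2 \equiv 1$ and $AC \equiv 2r$ gives $p \equiv 1 + 2r \pmod 3$. For $r = 1$ this forces $p \equiv 0 \pmod 3$, contradicting $3 \nmid p$; the case is therefore vacuous and $3 \mid \theta$ holds trivially. For $r = 2$ it gives $p \equiv 2 \pmod 3$, and then $\frac{C}{Np} \equiv r A p^{-1} \equiv A \pmod 3$ (using $\frac{C}{N} \equiv r A^{-1}$, $A^{-1} \equiv A$, and $p^{-1} \equiv 2$), so $E \equiv 0$. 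In all cases $3 \mid \theta$ for the matrix $M_1^{-1}$; the companion matrix $M_2^{-1}$ arising when $p \mid u'$ is handled by the same argument with the roles of $u$ and $u'$ interchanged.
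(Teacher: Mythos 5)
Your proposal is correct and follows essentially the same route as the paper: deduce $3 \mid B$ (hence $u' \equiv u \pmod 3$) from \eqref{eq:r3}, dispose of the cases where $3$ divides $u$ or $v$, and in the remaining case use the norm equation \eqref{eq:cond} to get $p \equiv 1 + 2r \equiv 1 - r \pmod 3$, which makes $r = 1$ vacuous and for $r = 2$ forces $\frac{C}{Np} \equiv A \pmod 3$. Your added remarks (invariance of the hypotheses under the translation $\alpha \mapsto \alpha - 24kN$, explicit deferral of $M_2^{-1}$ by symmetry) only make explicit what the paper leaves implicit.
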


\begin{proof}
Notice that $D \equiv r \pmod 3$ is equivalent with
$3 \mid B$ by \eqref {eq:r3}.
Then $u'\equiv u\pmod 3$ and
$$\theta \equiv u v \left(\frac{C}{Np} + \frac{A}{p} (1-u^2)
- A\right) \pmod 3.$$
If $3$ divides $u$ or $v$, we are done.

Otherwise, $u^2 \equiv v^2 \equiv 1\pmod 3$,
which implies
$$\theta \equiv \pm \left(\frac{C}{Np} -A\right) \pmod 3.$$
Writing $p \equiv 1 + A C \equiv 1 - r \pmod 3$, we see that this case
is possible only for $r = 2$ and $p \equiv 2 \pmod 3$,
and then $A \equiv - \frac {C}{N} \pmod 3$ and $3 \mid \frac{C}{Np} - A$.
\end {proof}

Note that the proposition does not hold for $r=0$, since then $3 \mid D$,
$3 \mid B$, $3 \mid AC$, and exactly one of $A$ and $C$ is divisible by $3$
(if both were, then $[A, B, C]$ would not be primitive), causing
$\theta\not\equiv 0\bmod 3$ unless one of $u$ or $v$ is divisible by~$3$.

%%%%%%%%%% SS
\subsection{The value of \texorpdfstring {$\theta$}{theta} modulo
powers of \texorpdfstring {$2$}{2}}
\label {ssct:theta2}

%%%%%%%%%%%%%%% SSS
\subsubsection{The case \texorpdfstring {$N$}{N} odd}
\label{ssct:Nodd}

Since $N_1=N$ and $\lambda (N) = 0$, \eqref{eq:theta} becomes
$$\theta \equiv (N-1) \rho \pmod 8$$
for
$$\rho = v \left(u' \frac{C}{Np} + A \left(\frac{u}{p} (1-{u}'^2)
- u'\right)\right) + 3 v_1 A_1 (u'-1).$$
So $\theta$ is divisible by~$8$ if $N \equiv 1\pmod 8$,
which is the case in particular if $N$ is a square.
Otherwise, $e$ is supposed to be even, so $e \theta$ is divisible
by $4$; if $N \equiv 1 \pmod 4$, $e \theta$ is even divisible by~$8$.
So the only remaining case of interest is $N \equiv 3 \pmod 4$;
then for $e \equiv 2 \pmod 4$, $8 \mid e \theta$ is equivalent with
$\rho$ even. We have
$$\rho \equiv v \big( u' C + A (u (1+u')+u') \big) + u'+1 \bmod 2.$$

\begin{proposition}[PROP21]\label{prop:Dodd}
Let $N$ be odd. If $D$ is odd, then $\theta \equiv (N-1) \rho \pmod 8$
with $\rho$ even.
\end{proposition}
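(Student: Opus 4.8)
The plan is to reduce the whole statement to showing that $\rho$ is \emph{even}, since the congruence $\theta \equiv (N-1)\rho \pmod 8$ for odd $N$ has already been recorded just above. Working modulo $2$, the two facts I would exploit are that $D$ odd forces $B$ odd (because $D = B^2 - 4AC$), and that $p = \Norm(\pi)$ is odd (because $\pi$ was chosen coprime to $6cN$).

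First I would turn ``$B$ odd'' into the parity relation $u' = u - vB \equiv u + v \pmod 2$ and substitute it into the reduction
$$\rho \equiv v\big(u'C + A(u(1+u')+u')\big) + u' + 1 \pmod 2$$
established above. Using $x^2 \equiv x \pmod 2$ throughout, the bracketed combination $u(1+u')+u'$ collapses to $u+uv+v$, and after multiplying out every term the expression simplifies to
$$\rho \equiv vC(u+1) + Av + u + v + 1 \pmod 2.$$
In parallel I would reduce the norm equation \eqref{eq:cond}: with $v^2 \equiv v$ and $u' \equiv u+v$ one gets $p = uu' + v^2 AC \equiv u(1+v) + vAC \pmod 2$, and imposing $p \equiv 1$ leaves exactly two admissible parity patterns — either $v$ even and $u$ odd, or $v$ odd and both $A$ and $C$ odd.

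Finally I would feed each pattern into the simplified form of $\rho$. When $v$ is even, $\rho \equiv u+1 \equiv 0$ since $u$ is odd; when $v$ is odd with $A \equiv C \equiv 1$, the surviving terms are $(u+1)+1+u+1+1 \equiv 0$. In both admissible cases $\rho$ is even, which is the assertion. The only delicate point is carrying out the two modulo-$2$ reductions (of $\rho$ and of the norm) consistently and without dropping a term; once the norm constraint pins down the two parity patterns, the case check is immediate, and — pleasantly — one never has to separate $D \equiv 1$ from $D \equiv 5 \pmod 8$.
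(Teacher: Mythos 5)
Your proposal is correct and follows essentially the same route as the paper: both reduce to showing $\rho$ even, use $B$ odd (from $D$ odd) to get $u' \equiv u+v \pmod 2$, extract the parity constraints from the norm equation \eqref{eq:cond} with $p$ odd, and finish by a two-case parity check ($v$ even with $u$ odd, versus $v$ odd with $A$, $C$ odd — the paper phrases its first case as ``one of $v$, $A$, $C$ even'' but immediately deduces $v$ even, so the cases coincide). The only cosmetic difference is that you fully expand $\rho \bmod 2$ symbolically before casing, while the paper evaluates $\rho$ within each case.
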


\begin{proof}
Since $B$ is odd, $u' \equiv u+v \pmod 2$.

If one of $v$, $A$ and $C$ is even, then $u$ and $u'$ are odd by
\eqref {eq:cond} (so that in fact $v$ is even), and $\rho$ is even.

Otherwise, $v$, $A$ and $C$ are odd, $u' = u+1\pmod 2$ and
$\rho$ is even as well.
\end{proof}

%%%%%%%%%%%%%%% SSS
\subsubsection{The case \texorpdfstring {$N$}{N} even}

Let $N = 2^{\lambda (N)} N_1$ with $N_1$ odd and
$\lambda (N) \geq 1$. We study divisibility of $\theta$ by
$2^{\xi}$ for increasing values of $\xi$. The value $\xi = 3$ is
of interest only when $e$ is odd, in which case $N$ and thus $N_1$
are squares. We start with an elementary remark.

\begin{lemma}\label{lem:thetamod4}
If $2 \mid N\mid C$, then
\begin {alphenumerate}
\item
$u$ and $u'$ are odd and
\begin{equation}\label{eq:thetamod4}
\theta \equiv (N-1) v u' \left(\frac{C}{Np} - A\right) \pmod 4;
\end{equation}
\item
moreover, if $4\mid C$, then $2\mid v B$.
\end {alphenumerate}
\end{lemma}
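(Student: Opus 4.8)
The plan is to prove the two parts of Lemma~\ref{lem:thetamod4} by reducing the general formula \eqref{eq:theta} modulo~$4$ using the hypothesis $2 \mid N \mid C$, and then extracting the parities of $u$ and $u'$ directly from the norm equation \eqref{eq:cond}. The starting observation is that since $2 \mid N$ and $N \mid C$, we have $2 \mid C$, and since $p = \Norm(\pi)$ is coprime to $6cN$, in particular $p$ is odd. Feeding $p \mid C$ and $2 \mid C$ into \eqref{eq:cond}, namely $p = u(u - vB) + v^2 AC$, forces $u(u - vB) = u u'$ to be odd, so both $u$ and $u'$ are odd. This immediately handles the parity claim and will be the key simplification throughout.

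For part~(a), I would reduce each summand of \eqref{eq:theta} modulo~$4$. With $u, u'$ odd we have $u'^2 \equiv 1 \pmod 8$ and hence ${u'}^2 - 1 \equiv 0 \pmod 8$, so the final term $\tfrac{3\lambda(N)({u'}^2-1)}{2}$ vanishes modulo~$4$ (indeed modulo a higher power). For the middle term $3 v_1 A_1 (N_1 - 1)(u' - 1)$, since $N$ is even $N_1$ is odd, and I would argue that $(u'-1)$ is even (as $u'$ is odd) combined with $(N_1 - 1)$ being even, giving divisibility of this term by~$4$. The first term $(N-1)v\bigl(u'\tfrac{C}{Np} + A(\tfrac{u}{p}(1 - {u'}^2) - u')\bigr)$ is where the surviving contribution lives: since $1 - {u'}^2 \equiv 0 \pmod 8$ and $p$ is odd (a unit mod~$4$), the term $A \cdot \tfrac{u}{p}(1 - {u'}^2)$ drops out modulo~$4$, leaving $(N-1)v(u'\tfrac{C}{Np} - Au')= (N-1)vu'\bigl(\tfrac{C}{Np} - A\bigr)$, which is exactly \eqref{eq:thetamod4}.

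For part~(b), assuming additionally $4 \mid C$, I want to show $2 \mid vB$. Here I would return to the norm equation \eqref{eq:cond} read modulo~$4$: with $4 \mid C$ the term $v^2 A C$ vanishes modulo~$4$, so $p \equiv u(u - vB) = u^2 - uvB \pmod 4$. Since $u$ is odd, $u^2 \equiv 1 \pmod 8$, and $p$ is odd; writing $p \equiv 1 \pmod 2$ and comparing parities forces $uvB$ to be even, whence $vB$ is even because $u$ is odd. This gives the claim.

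The main obstacle I anticipate is the careful bookkeeping of which powers of~$2$ actually divide each term, since the claim is only modulo~$4$ but several intermediate quantities (like $1 - {u'}^2$) are divisible by~$8$; I must avoid over-claiming and be precise about the odd factor $p$ in the denominators, verifying that $\tfrac{C}{Np}$ and $\tfrac{u}{p}$ are genuine integers mod~$4$ (justified by $N \mid C$, $p \mid C$ with $p \nmid N$, and $p \mid u$ in the relevant branch). The parity argument from \eqref{eq:cond} is the linchpin: once $u, u'$ odd is established, every reduction becomes routine, so the real content is simply organising these divisibility facts cleanly.
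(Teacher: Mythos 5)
Your proposal is correct and takes essentially the same route as the paper's proof: both parts rest on extracting the oddness of $u$ and $u'$ from \eqref{eq:cond} (using $2 \mid C$ and $p$ odd), after which ${u'}^2 \equiv 1 \pmod 8$ and the oddness of $N_1$ annihilate every term of \eqref{eq:theta} modulo $4$ except $(N-1)vu'\left(\frac{C}{Np}-A\right)$, while for (b) one reduces \eqref{eq:cond} modulo $4$ to $p \equiv u(u-vB)$ and concludes $2 \mid vB$ from $u$, $p$ odd. The only microscopic slip is the parenthetical claim that $\frac{3\lambda(N)({u'}^2-1)}{2}$ vanishes modulo a higher power than $4$ --- it need not (e.g.\ $u'=3$, $\lambda(N)=1$ gives $12$) --- but nothing in your argument depends on this.
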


\begin{proof}
\begin {alphenumerate}
\item
$u$ and $u'$ are odd by \eqref {eq:cond},
so that ${u'}^2\equiv 1\pmod 8$. Since $N_1$ is odd, almost all terms
disappear from \eqref{eq:theta}.
\item
We have $p = u^2 + v (-u B + v A C)\equiv u (u - v B) \bmod
4$. Since $u$ is odd by (a), we deduce that $v B$ must be even.
\end {alphenumerate}
\end{proof}

As discussed above, $N \mid C$ is equivalent with
$B^2 \equiv D \pmod {4 N}$. Then
$A \frac {C}{N} = \frac {B^2 - D}{4 N}$; by gradually imposing more
restrictions modulo powers of~$2$ times $4 N$, we fix $A \frac {C}{N}$
modulo powers of~$2$.

\begin{proposition}[PROP20]\label{prop:Neventheta2}
When $N$ is even, $\theta$ is even in the following cases:
\begin {alphenumerate}
\item
$B^2 \equiv D + 4N \pmod {8N}$;
\item
$B^2 \equiv D \pmod {8N}$ and $D \equiv 1 \pmod 8$.
\end {alphenumerate}
\end{proposition}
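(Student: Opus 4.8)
The plan is to reduce the whole statement to a single parity computation built on the $\theta \bmod 4$ formula of Lemma~\ref{lem:thetamod4}. Since $N$ is even, part~(a) of that lemma applies: $u$ and $u'$ are odd, and $\theta \equiv (N-1)\, v\, u' \left(\frac{C}{Np} - A\right) \pmod 4$ as in \eqref{eq:thetamod4}. First I would reduce this congruence modulo~$2$; because $N-1$ and $u'$ are both odd, it collapses to $\theta \equiv v\left(\frac{C}{Np} - A\right) \pmod 2$. Thus everything comes down to proving, in each case, that $v\left(\frac{C}{Np}-A\right)$ is even. Two standing facts will be recorded at the outset: the hypothesis $\gcd(A,N)=1$ together with $N$ even forces $A$ odd; and $Np \mid C$ (since $p \mid C$ and $\gcd(p,N)=1$), so $\frac{C}{Np}$ is an integer whose parity equals that of $\frac{C}{N}$.

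The arithmetic link I would exploit is the identity $A\,\frac{C}{N} = \frac{B^2 - D}{4N}$, valid because $N \mid C$. In case~(a), the hypothesis $B^2 \equiv D + 4N \pmod{8N}$ says exactly that $\frac{B^2-D}{4N}$ is odd; as $A$ is odd, this forces $\frac{C}{N}$, and hence $\frac{C}{Np}$, to be odd. Then $\frac{C}{Np} - A$ is even, so $\theta$ is even no matter what $v$ is, and case~(a) is finished with no further effort.

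Case~(b) is where the extra hypothesis $D \equiv 1 \pmod 8$ enters, and I expect the deduction that $v$ is even to be the main obstacle. Here $B^2 \equiv D \pmod{8N}$ makes $\frac{B^2-D}{4N}$ even, so (again because $A$ is odd) $\frac{C}{N}$ and $\frac{C}{Np}$ are even and $\frac{C}{Np}-A$ is \emph{odd}; evenness of $\theta$ can therefore no longer be supplied by the second factor and must instead come from $v$. To pin down the parity of $v$ I would use $D \equiv 1 \pmod 8$, which in particular makes $D$ odd, so that $B^2 \equiv D \pmod{8N}$ forces $B$ odd. Then the oddness of both $u$ and $u'$ from Lemma~\ref{lem:thetamod4} does the work: since $u' = u - vB$ with $u$, $u'$ odd and $B$ odd, the product $vB$ is even, whence $v$ is even. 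This gives $\theta \equiv v(\cdots) \equiv 0 \pmod 2$ and completes the proof. The only genuine subtlety is recognising that in case~(b) the parity of $\theta$ is governed by $v$ rather than by $\frac{C}{Np}-A$, and that $D\equiv 1\pmod 8$ is precisely what fixes the parity of $B$, and hence of $v$.
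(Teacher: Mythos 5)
Your proof is correct and follows essentially the same route as the paper: both rest on the $\theta \bmod 4$ formula of Lemma~\ref{lem:thetamod4} together with the identity $A\,\frac{C}{N}=\frac{B^2-D}{4N}$, settling case (a) by the parity of $\frac{C}{Np}-A$ and case (b) by showing $v$ is even via $B$ odd. The only (harmless) deviation is that you extract $2\mid vB$ directly from the oddness of $u$ and $u'=u-vB$ given by Lemma~\ref{lem:thetamod4}(a), whereas the paper first deduces $4\mid C$ and invokes Lemma~\ref{lem:thetamod4}(b) --- the same fact by a marginally longer route.
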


\begin{proof}
\begin {alphenumerate}
\item
The conditions imply that $A (C/N)$ is odd, and
Lemma~\ref{lem:thetamod4}(a) allows to conclude since $p$ is odd.
\item
In that case $A (C/N)$ is even. Since $A$ is prime to $N$, it is
odd and therefore $C/N$ is even, which implies in turn $4 \mid C$.
By Lemma~\ref{lem:thetamod4}(b), we get $2 \mid v B$. Since
$D$ is odd, $B$ is odd and $v$ is even, and (\ref{eq:thetamod4})
finishes the proof.
\end {alphenumerate}
\end{proof}

%%%%%%%%%%%%%%%%%%%%
%%%%% theta mod 4 starts here
%%%%%%%%%%%%%%%%%%%%
\subsubsection*{Divisibility of \texorpdfstring {$\theta$}{theta} by $4$}

We begin with a purely arithmetical lemma that will give us necessary
conditions on the parameters for the equation $B^2\equiv D + r (4N)
\bmod (16N)$ to have a solution.
\begin{lemma}
\label{lem:DB4}
Let $r \in \{0, 1, 2, 3\}$ and $N$ be even. Given $D$,
suppose the equation $B^2 \equiv D + 4 r N \pmod {16 N}$ admits a
solution in $B$. Then either $D\equiv 1\bmod 8$ which implies $B$ is
odd, or $D$ is even and $D$ satisfies
one of the conditions of the following table depending on $rN\bmod 8$,
which in turn gives properties of $B$.
$$\begin{array}{|c|c|c|c|}\hline
r N \bmod 8 & \text{condition on } D & \Rightarrow D/4 \bmod 8 & B/2 \\ \hline
0 & 4 \bmod 32   & 1 & \text{odd} \\
  & 16 \mid D    & 0 &\text{even} \\ \hline
2 & 24 \bmod 32  & 6 & 0\bmod 4 \\
  & 28 \bmod 32  & 7 & \text{odd} \\
  &  8 \bmod 32  & 2 & 2 \bmod 4 \\ \hline
4 & 16 \mid D    & 0 & \text{even} \\
  & 20\bmod 32   & 5 & \text{odd} \\ \hline
6 & 8 \mid\mid D & 0 & 0\bmod 4 \\
  & 12 \bmod 32  & 3 & \text{odd} \\ \hline
\end{array}$$
\end{lemma}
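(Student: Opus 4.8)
The plan is to reduce the congruence $B^2 \equiv D + 4rN \pmod{16N}$ to a congruence purely modulo a power of $2$, then classify solutions by $2$-adic valuation. First I would observe that since $N$ is even, $16N$ is divisible by $32$, so any solution $B$ must in particular satisfy $B^2 \equiv D + 4rN \pmod{32}$; conversely, the odd part $N_1$ contributes nothing obstructive because $D + 4rN$ is automatically a square modulo $N_1$ whenever the full congruence is solvable (the relevant content lives at the prime $2$, as in Lemma~\ref{lemma:eq2}). So the heart of the matter is to understand which residues are squares modulo $32$, and to read off the consequences for $D \bmod 32$ and for the $2$-adic behaviour of $B$.

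The key structural fact I would use is the classification of squares in $\Z/2^k\Z$: an odd number is a square modulo $32$ iff it is $\equiv 1 \pmod 8$, and $4$ is a square modulo $32$ (with even square root) while $4u$ for $u \equiv 3 \pmod 4$ is not, etc. Concretely I would split into cases according to the parity of $B$. If $B$ is odd then $B^2 \equiv 1 \pmod 8$, forcing $D + 4rN \equiv 1 \pmod 8$; since $N$ is even, $4rN \equiv 0 \pmod 8$, so $D \equiv 1 \pmod 8$, which is exactly the first alternative in the statement. If $B$ is even, write $B = 2B'$ so $B^2 = 4B'^2$ and the congruence becomes $4B'^2 \equiv D + 4rN \pmod{16N}$; in particular $D \equiv -4rN \equiv 4rN \pmod 4$, so $D$ is even, and dividing the mod-$32$ reduction by $4$ gives $B'^2 \equiv D/4 + rN \pmod 8$. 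This last congruence, together with the value of $rN \bmod 8$, is what pins down $D/4 \bmod 8$ and the residue of $B' = B/2$.

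The bulk of the work is then a finite case analysis: for each of the four values $rN \equiv 0, 2, 4, 6 \pmod 8$, I would determine which residues $D/4 \bmod 8$ make $D/4 + rN$ a square modulo $8$ (recalling the squares mod $8$ are $0, 1, 4$), and for each such residue track whether $B'$ is odd, $\equiv 0 \pmod 4$, $\equiv 2 \pmod 4$, etc. The entries are then refined from ``mod $8$'' to ``mod $32$'' conditions on $D$ by lifting: for instance when $D/4 \equiv 0 \pmod 8$ one must further distinguish $8 \,\|\, D$ from $16 \mid D$, which is where the finer residues like $24 \bmod 32$ or $8 \bmod 32$ in the table originate. Each line of the table corresponds to choosing the parity/valuation of $B'$ consistent with $4B'^2 \equiv D + 4rN$ modulo $32$ (and modulo $16N$, but the extra factor $N_1$ is inert).

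The main obstacle I expect is \emph{bookkeeping precision} rather than any single hard idea: one must be careful that the modulus is $16N$, not merely $16$ or $32$, so that ``solvable'' means solvable modulo $16N$ and the conditions on $D$ must be exactly those that survive lifting through the full $2$-adic modulus $2^{\lambda(16N)}$. The subtle point is to check that imposing the derived condition on $D \bmod 32$ (or $8 \mid\mid D$, $16 \mid D$) is not only necessary but sufficient for a solution modulo $16N$ — i.e. that once the mod-$32$ obstruction is cleared, Hensel-type lifting to higher powers of $2$ dividing $N$ succeeds. Since $\Delta$ is a square modulo $8$ but not modulo higher powers of $2$ (as noted in the proof of Lemma~\ref{lemma:eq2}), I would verify that the tabulated conditions are precisely calibrated so that the valuation of $D + 4rN$ at $2$ matches an even square, making the lift automatic; this consistency check across all the table rows is where the care is needed.
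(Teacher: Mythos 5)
Your proposal follows essentially the same route as the paper's proof: reduce modulo $8$ to dispose of odd $D$ (since $N$ even makes $4rN \equiv 0 \pmod 8$, forcing $D \equiv 1 \pmod 8$ and $B$ odd), then for even $D$ write $B = 2B'$ and divide by $4$ to get $B'^2 \equiv D/4 + rN \pmod 8$, building the table from the fact that the only squares modulo $8$ are $\{0,1,4\}$. Your final paragraph's concern about sufficiency (Hensel-type lifting back up to the full modulus $16N$) is unnecessary, since the lemma asserts only that solvability \emph{implies} the tabulated conditions, which is exactly the direction your case analysis establishes.
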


\begin{proof}
Since $B^2 \equiv D \bmod 8$, the only possible value for odd $D$ is
$D \equiv 1\bmod 8$, giving $B$ odd. If $D$ is even, then
$$\left(\frac{B}{2}\right)^2 \equiv \frac{D}{4} + r N \bmod 8$$
and since $N$ is even, the above table makes sense.

Remembering that the only squares modulo 8 are $\{0, 1, 4\}$, the
table is easily constructed and left as an exercise to the reader.
\end{proof}

Now, we are ready to extend the result of
Proposition~\ref{prop:Neventheta2} by considering $B^2 \equiv D + r (4
N) \pmod {16 N}$ with $r \in \{ 1, 3 \}$, which yields $A \frac{C}{N}
\equiv r\pmod 4$. Note that case (b) cannot be extended and we leave
the proof of this to the reader.

\begin{proposition}[PROP44]\label{prop:44}
Let $N$ be even, and suppose $B^2 \equiv D + 4 N \pmod {16 N}$ has a solution.
Then $\theta$ is divisible by $4$
if one of the following conditions is met:
\begin {alphenumerate}
\item
$D \equiv 1 \pmod 8$;
\item
$16\mid D$;
\item
$2\mid\mid N$ and $4\mid\mid D$.
\end {alphenumerate}
\end{proposition}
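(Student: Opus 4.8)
The plan is to build on the reduction modulo~$4$ furnished by Lemma~\ref{lem:thetamod4}(a): since $2 \mid N \mid C$, the coefficients $u$ and $u'$ are odd and \eqref{eq:thetamod4} gives $\theta \equiv (N-1)\, v\, u' \left(\frac{C}{Np} - A\right) \pmod 4$. The hypothesis $B^2 \equiv D + 4N \pmod{16N}$ is exactly $A\,\frac{C}{N} = \frac{B^2 - D}{4N} \equiv 1 \pmod 4$, so $A$ and $\frac{C}{N}$ are odd with $A \equiv \frac{C}{N} \pmod 4$, and, $p$ being odd, $\frac{C}{Np}$ is an odd integer. Hence $\frac{C}{Np} - A$ is even, and as $N-1$ and $u'$ are odd I may write $\theta \equiv 2 v w \pmod 4$ with $2w = \frac{C}{Np} - A$. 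So everything reduces to showing that $v w$ is even, i.e.\ that in each case \emph{$v$ is even or $\frac{C}{Np} \equiv A \pmod 4$}; using $A \equiv \frac{C}{N} \pmod 4$ one checks that the second alternative is equivalent to $p \equiv 1 \pmod 4$.

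In case~(a), $D \equiv 1 \pmod 8$ forces $D$ odd, hence $B$ odd (since $B^2 = D + 4AC$); as $u$ and $u'$ are odd, $u - u' = vB$ is even, so $v$ is even and we are done. In case~(b), I would first fix the $2$-adic valuations: as $A\,\frac{C}{N}$ is odd, $v_2(B^2 - D) = v_2(4N) = 2 + \lambda(N)$, while $16 \mid D$ gives $v_2(D) \geq 4$. If $\lambda(N) = 1$ this would make $v_2(B^2) = 3$, which is impossible, so no $B$ exists and the statement is vacuous; thus $\lambda(N) \geq 2$, whence $4 \mid C$ (because $\frac{C}{N}$ is odd) and $v_2(B^2) \geq 4$, i.e.\ $4 \mid B$. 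Then $u' = u - vB \equiv u \pmod 4$, and \eqref{eq:cond} gives $p = u u' + v^2 A C \equiv u^2 \equiv 1 \pmod 4$, as required.

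In case~(c), $2 \mid\mid N$ and $4 \mid\mid D$ give $v_2(C) = \lambda(N) = 1$ and $v_2(B^2 - D) = 3 \neq 2 = v_2(D)$, so $v_2(B) = 1$; thus $C \equiv B \equiv 2 \pmod 4$. If $v$ is even we conclude immediately. If $v$ is odd, then $u' = u - vB \equiv u - 2 \pmod 4$, so $u u' \equiv 3 \pmod 4$, whereas $v^2 A C \equiv A C \equiv 2 \pmod 4$; by \eqref{eq:cond}, $p \equiv 3 + 2 \equiv 1 \pmod 4$, and again the claim holds. (The admissible residues of $B$ in (b) and (c) can equally be read off Lemma~\ref{lem:DB4} with $r = 1$.) The bookkeeping of $2$-adic valuations is routine; the one genuinely delicate step is case~(c), where neither alternative holds unconditionally and the argument must turn $v$ \emph{odd}---the apparent obstruction---into the very condition $u \not\equiv u' \pmod 4$ that yields $p \equiv 1 \pmod 4$. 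Not overlooking the vacuous sub-case $\lambda(N) = 1$ in (b) is the other easy-to-miss point.
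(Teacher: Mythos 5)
Your proof is correct, and in cases (b) and (c) it takes a genuinely different route from the paper's. Case (a) is identical in both: $D$ odd forces $B$ odd, so $vB = u - u'$ with $u, u'$ odd gives $2 \mid v$, and \eqref{eq:thetamod4} concludes. For (b) and (c), however, the paper invokes Theorem~\ref{thm-Dirichlet}: since $p = \Norm(\pi)$ with $\pi \in \OO$ is represented by the principal form, every generic character of $D$ takes the value $+1$ at $p$; in particular $\chi_4(p) = 1$, i.e.\ $p \equiv 1 \pmod 4$, whenever $D/4 \equiv 0, 3, 4, 7 \pmod 8$. That settles (b) at once, and (c) follows because the table of Lemma~\ref{lem:DB4} shows that $D/4$ odd together with solvability forces $2 \mid\mid N$ and $D/4 \equiv 3, 7 \pmod 8$. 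You reach the same pivot $p \equiv 1 \pmod 4$ by elementary means, reading it off the norm equation \eqref{eq:cond} after pinning down $B$ and $C$ modulo~$4$ via $2$-adic valuations: in (b), $4 \mid B$ and $4 \mid C$ give $p \equiv uu' \equiv u^2 \equiv 1 \pmod 4$; in (c), $B \equiv C \equiv 2 \pmod 4$ and $v$ odd give $uu' \equiv 3$, $v^2AC \equiv 2$, hence $p \equiv 1 \pmod 4$. In effect you re-prove the relevant instance of genus theory by hand, since $p = u^2 - Buv + ACv^2$ is a value of a form in the principal class. What your route buys is self-containedness (no appeal to genus theory) plus the observation, absent from the paper, that the sub-case $2 \mid\mid N$ with $16 \mid D$ in (b) is vacuous; what the paper's route buys is brevity, uniformity across (b) and (c), and scalability --- the same character argument (with $\chi_8$ added) drives the subsequent mod-$8$ analysis in PROP8, where hand computations like yours would become substantially heavier.
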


\begin{proof}
If $D$ is odd, the condition follows from Lemma~\ref{lem:DB4}.
Then $u' = u - v B$ leads to $2\mid v$ and $4 \mid \theta$.

Assuming $D$ even, Theorem~\ref{thm-Dirichlet} implies that
$\chi_4(p) = 1$ (or, equivalently, $p\equiv 1\pmod 4$) when $D/4 \bmod 8
\in \{3, 4, 7, 0\}$, which
immediately settles case (b). When $D/4$ is odd, we see that we cannot
have the case $4\mid N$ when comparing with the table of Lemma
\ref{lem:DB4}, and this gives us (c).

In the other cases, when $p\equiv 3\bmod 4$, we get $v$ odd since
$AC\equiv 2\bmod 4$ and there is no reason to have $\theta\equiv
0\bmod 4$.
\end{proof}

\begin{proposition}[PROP412]\label{prop:412}
Let $N$ be even, and suppose $B^2 \equiv D + 12 N \pmod {16 N}$.
Then $\theta$ is divisible by $4$
if one of the following conditions is met:
\begin {alphenumerate}
\item
$D \equiv 1 \pmod 8$;
\item
$8 \mid\mid D$ and $2 \mid\mid N$;
\item
$4 \mid\mid D$ and $4 \mid N$.
\end {alphenumerate}
In the cases of $D$ even, $B$ satisfies the following congruences modulo~$4$:
\begin {alphenumerate}
\setcounter {enumi}{1}
\item
$4 \mid B$;
\item
$2 \mid\mid B$.
\end {alphenumerate}
\end{proposition}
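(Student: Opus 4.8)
The plan is to follow the same skeleton as the proof of Proposition~\ref{prop:44}, but to exploit an asymmetry between the residues $r=1$ and $r=3$: for $r=3$ the value of $p \bmod 4$ that makes $4 \mid \theta$ automatic is exactly the one produced by an \emph{odd} $v$, so no appeal to Dirichlet's theorem will be needed. First I would record the consequences of the hypothesis. Since $12N \equiv 0 \pmod{4N}$, the assumption $B^2 \equiv D + 12N \pmod{16N}$ gives $B^2 \equiv D \pmod{4N}$, hence $N \mid C$; moreover
\[
A \frac{C}{N} = \frac{B^2 - D}{4N} \equiv 3 \pmod 4 .
\]
As $\gcd(A,N)=1$ with $N$ even, $A$ is odd, so $\frac{C}{N}$ is odd and $v_2(C) = v_2(N)$.

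Next I would apply Lemma~\ref{lem:thetamod4}(a): since $2 \mid N \mid C$, the integers $u, u'$ are odd and
\[
\theta \equiv (N-1)\, v\, u' \left( \frac{C}{Np} - A \right) \pmod 4 .
\]
Because $N-1$ and $u'$ are odd, $4 \mid \theta$ is equivalent to $4 \mid v\bigl(\frac{C}{Np} - A\bigr)$. Using $\frac{C}{N} \equiv 3A \pmod 4$ and $p$ odd, I would compute $\frac{C}{Np} - A \equiv A(3p-1) \pmod 4$, which equals $2 \pmod 4$ when $p \equiv 1 \pmod 4$ and $0 \pmod 4$ when $p \equiv 3 \pmod 4$. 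Hence $4 \mid \theta$ as soon as $v$ is even \emph{or} $p \equiv 3 \pmod 4$, and the whole proof reduces to controlling these two quantities.

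For case (a), $D \equiv 1 \pmod 8$ is odd, so Lemma~\ref{lem:DB4} forces $B$ odd; then $u' = u - vB$ with $u,u'$ odd gives $v$ even, whence $4 \mid \theta$. For the even-$D$ cases (b) and (c) I would first read off from Lemma~\ref{lem:DB4} (equivalently, from $(B/2)^2 \equiv D/4 + 3N \pmod 8$) the claimed $2$-adic behaviour of $B$, namely $4 \mid B$ in case (b) and $2 \mid\mid B$ in case (c). The crucial step is then to reduce the norm identity $p = u(u - vB) + v^2 A C$ of \eqref{eq:cond} modulo $4$. In case (b) one has $2 \mid\mid N$, hence $2 \mid\mid C$ and $AC \equiv 2 \pmod 4$, while $4 \mid B$ yields $u u' \equiv 1 \pmod 4$; in case (c) one has $4 \mid N$, hence $4 \mid C$, while $2 \mid\mid B$ yields $u u' \equiv 1 - 2v \pmod 4$. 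In both situations the contributions combine to $p \equiv 1 + 2v \pmod 4$, so that $v$ odd forces $p \equiv 3 \pmod 4$ and $v$ even forces $p \equiv 1 \pmod 4$. By the dichotomy of the previous paragraph, $4 \mid \theta$ in either parity of $v$, which settles both cases.

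The main obstacle is the $2$-adic bookkeeping needed to collapse the two superficially different computations in (b) and (c) to the single congruence $p \equiv 1 + 2v \pmod 4$: one must correctly pair the valuation of $C$ (governed by whether $2 \mid\mid N$ or $4 \mid N$) with the residue of $B$ modulo $4$ extracted from Lemma~\ref{lem:DB4}, and verify that the two cases yield the same relation. Once this identity is in hand the conclusion is immediate, and—unlike Proposition~\ref{prop:44}—no appeal to Dirichlet's theorem (Theorem~\ref{thm-Dirichlet}) is required, precisely because for $r=3$ the automatically available residue $p \equiv 3 \pmod 4$ is already the favourable one.
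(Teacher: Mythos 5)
Your proof is correct and follows essentially the same route as the paper's: the reduction via Lemma~\ref{lem:thetamod4}, the extraction of $4 \mid B$ resp.\ $2 \mid\mid B$ from Lemma~\ref{lem:DB4}, and the norm-equation argument giving ``$v$ even or $p \equiv 3 \pmod 4$'' (your unified congruence $p \equiv 1 + 2v \pmod 4$) are exactly the paper's steps for cases (b) and (c). The only difference is presentational: the paper additionally invokes Theorem~\ref{thm-Dirichlet} to explain why the residues $D/4 \in \{3, 4, 7, 0\} \pmod 8$ are excluded from the statement, whereas you correctly observe that proving the stated cases themselves requires no genus-theoretic input.
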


\begin{proof}
The proof for $D$ odd as well as the case distinctions for $D$ even are
the same as in Proposition~\ref {prop:44}. However, we now have
$A \frac {C}{N} \equiv -1 \pmod 4$.

In the cases where $\chi_4(p) = 1$ (i.e., $D/4 \in \{3, 4, 7, 0\}$),
we get $p \equiv 1 \pmod 4$ and $\frac {C}{N p} - A \equiv 2
\pmod 4$. Since there is no compelling reason why $v$ should be even,
$\theta$ may or may not be divisible by $4$.

So we have to turn our attention to the four other cases, i.e., $D/4
\in \{1, 2, 5, 6\}$, with Lemma~\ref{lem:DB4} in mind.
If $4 \mid B$, $8 \mid\mid D$ and $2 \mid\mid N$, then $2 \mid\mid C$,
and either $v$ is even or $p \equiv 3 \pmod 4$. In both cases,
Lemma~\ref {lem:thetamod4} shows that $4 \mid \theta$.
If $2 \mid\mid B$ and $4 \mid\mid D$, suppose that furthermore $4 \mid N$.
Then $4 \mid A C$, and again $v$ is even or $p \equiv 3 \pmod 4$.
\end {proof}

%%%%%%%%%%%%%%%%%%%%
%%%%% theta mod 8
%%%%%%%%%%%%%%%%%%%%

\subsection* {Divisibility of \texorpdfstring {$\theta$}{theta} by $8$}

As discussed at the beginning of \S\ref {ssct:Nodd}, for
generating class fields we are only interested in $\theta \bmod 8$
when $N$ is a square, that is, $\lambda (N)$ is even and $N_1$ is
a square; in particular, $N_1 \equiv 1 \pmod 8$. Then the
following generalisation of Lemma~\ref {lem:thetamod4} is
immediately seen to hold:

\begin {lemma}
\label {lem:thetamod8}
If $N$ is an even square dividing $C$, then
\[
\theta \equiv (N-1) v u' \left(\frac{C}{Np} -A \right) \pmod 8.
\]
\end {lemma}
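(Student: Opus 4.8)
The plan is to reduce the definition \eqref{eq:theta} of $\theta$ modulo $8$ term by term, exploiting the two structural consequences of $N$ being an even square that are recorded in the paragraph preceding the lemma: first, $\lambda(N)$ is even (and hence at least $2$); second, $N_1$ is an odd square, so that $N_1 \equiv 1 \pmod 8$. These two facts are exactly what make the last two summands of \eqref{eq:theta} disappear modulo $8$, leaving only the first.

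First I would note that the hypothesis $2 \mid N \mid C$ lets me invoke Lemma~\ref{lem:thetamod4}(a), which guarantees that $u$ and $u'$ are odd; consequently ${u'}^2 \equiv 1 \pmod 8$ and $1 - {u'}^2 \equiv 0 \pmod 8$. I would also recall from the construction of $M_1$ that, in the case $p \mid u$, both $\frac{u}{p}$ and $\frac{C}{Np}$ are genuine integers, the former because $p \mid u$ and the latter because $N \mid C$ and $p \mid C$ with $\gcd(N,p) = 1$; so every quantity appearing in \eqref{eq:theta} is an ordinary integer and the congruences below make sense.

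Next I would kill the middle summand $3 v_1 A_1 (N_1 - 1)(u' - 1)$ using $N_1 - 1 \equiv 0 \pmod 8$. For the last summand $\frac{3 \lambda(N)({u'}^2 - 1)}{2}$, I would use that ${u'}^2 - 1 = (u'-1)(u'+1)$ is a product of two consecutive even integers, hence divisible by $8$; thus $\frac{{u'}^2 - 1}{2}$ is divisible by $4$, and the even factor $\lambda(N)$ supplies the remaining factor of $2$, giving divisibility by $8$. Finally, in the first summand the term $A \frac{u}{p}(1 - {u'}^2)$ vanishes modulo $8$ since $1 - {u'}^2 \equiv 0 \pmod 8$, leaving precisely $(N-1) v u' \left(\frac{C}{Np} - A\right)$, which is the claimed congruence.

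The argument is essentially a bookkeeping exercise, and the only point requiring care is the half-integer last summand: one must combine $4 \mid \frac{{u'}^2 - 1}{2}$ with the parity of $\lambda(N)$ to reach $8$, rather than reading off ${u'}^2 - 1 \equiv 0 \pmod 8$ in isolation. No genuine obstacle arises, which is consistent with the text describing this generalisation of Lemma~\ref{lem:thetamod4} as being immediately seen to hold.
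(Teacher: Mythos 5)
Your proof is correct and follows exactly the route the paper intends: the paper states the lemma as ``immediately seen to hold'' from the two facts recorded just before it ($\lambda(N)$ even and $N_1 \equiv 1 \pmod 8$ since $N$ is an even square), together with Lemma~\ref{lem:thetamod4}(a), and your write-up simply supplies those omitted details, including the one genuinely delicate point that $4 \mid \frac{{u'}^2-1}{2}$ must be combined with $2 \mid \lambda(N)$ to kill the last summand modulo $8$.
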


From the results obtained for $B^2 \equiv D + 4 r N \pmod {16 N}$ for
$r \in \{1, 3\}$, it is natural to look at $B^2\equiv D+ 4 r N\pmod {32N}$
for $r\in \{1, 3, 5, 7\}$. Then $A \frac{C}{N} \equiv r \pmod 8$.

\begin{proposition}[PROP8]
Let $N$ be an even square, and suppose $B^2\equiv D+ 4 r N\pmod {32N}$.
Then $\theta$ is divisible by $8$ if one of the following conditions holds:
\begin {alphenumerate}
\item
$r=3$ or $r = 7$, and $D \equiv 1 \pmod 8$;
\item
$r=1$, and $32 \mid D$;
\item
$r=5$, and $16 \mid\mid D$.
\end {alphenumerate}
In the cases of $D$ even, $B$ satisfies the following congruences modulo~$8$:
\begin {alpharabenumerate}[b]
\item
$4 \mid\mid B$ if $4 \mid\mid N$;
\item
$8 \mid B$ if $16 \mid N$.
\end {alpharabenumerate}
\begin {alpharabenumerate}[c]
\item
$4 \mid\mid B$ if $16 \mid N$;
\item
$8 \mid B$ if $4 \mid\mid N$.
\end {alpharabenumerate}
\end{proposition}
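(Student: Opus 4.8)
The plan is to start from Lemma~\ref{lem:thetamod8}, which, since $N$ is an even square dividing $C$, already reduces the claim to showing $8 \mid v u' \left( \frac{C}{Np} - A \right)$; as $N-1$ and $u'$ are odd, only the factor $v \left( \frac{C}{Np} - A \right)$ matters. Writing the hypothesis $B^2 \equiv D + 4rN \pmod{32N}$ in the form $A \frac{C}{N} \equiv r \pmod 8$ and using that $A$ is odd (so $A^2 \equiv 1 \pmod 8$), I would compute $\frac{C}{Np} - A \equiv A (r p^{-1} - 1) \pmod 8$, whence the $2$-adic valuation of $\frac{C}{Np} - A$ matches that of $r - p$ modulo $8$. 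The whole problem thus becomes to bound $v_2(v) + v_2(r - p)$ below by $3$.

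For $D$ odd --- where solvability of $B^2 \equiv D + 4rN \pmod{32N}$ forces $D \equiv 1 \pmod 8$ and hence $B$ odd --- the two summands are linked. From $u' = u - vB$ together with $u, u'$ odd (Lemma~\ref{lem:thetamod4}) one gets $v$ even; then $4 \mid v^2$ and $4 \mid C$ turn \eqref{eq:cond} into $p \equiv u u' \equiv 1 - u v B \pmod 8$, so that $v_2(v) = 1 \Leftrightarrow p \equiv 3 \pmod 4$ and $v_2(v) \geq 2 \Leftrightarrow p \equiv 1 \pmod 4$. For $r \equiv 3 \pmod 4$, that is $r \in \{3, 7\}$, these correlations are exactly the favourable ones: $p \equiv 3 \pmod 4$ gives $v_2(v) = 1$ but $v_2(r - p) \geq 2$, whereas $p \equiv 1 \pmod 4$ gives $v_2(v) \geq 2$ but $v_2(r - p) = 1$; in either case the sum is at least $3$. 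The same bookkeeping shows that $r \in \{1, 5\}$ fails for odd $D$, which is precisely why those values of $r$ reappear only with even $D$ in cases (b) and (c).

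For $D$ even I would invoke Theorem~\ref{thm-Dirichlet}: the generic characters $\chi_4(p) = \legendre{-1}{p}$ and $\chi_8(p) = \legendre{2}{p}$ attached to $D$ must all equal $+1$, and for the relevant residues of $D/4 \bmod 8$ this pins $p$ down modulo $8$, hence fixes $v_2(r - p)$. In parallel, a refinement of Lemma~\ref{lem:DB4} modulo $32N$ records when $B^2 \equiv D + 4rN \pmod{32N}$ is solvable and what it forces on $D$ and on $B \bmod 8$; this is where the split between $4 \| N$ and $16 \mid N$ enters and produces the stated congruences on $B$. Combining the value of $p \bmod 8$ with the parity of $v$ read off from \eqref{eq:cond} then yields $8 \mid \theta$ in cases (b) ($r = 1$, $32 \mid D$) and (c) ($r = 5$, $16 \| D$), exactly in the spirit of the proofs of Propositions~\ref{prop:44} and~\ref{prop:412}.

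The main obstacle is this last, $D$-even bookkeeping: one must simultaneously juggle the solvability table modulo $32N$, the generic-character constraints on $p$, and the $2$-adic valuations of $v$ and of $\frac{C}{Np} - A$, all the while tracking the two sub-cases $4 \| N$ and $16 \mid N$ that toggle the congruence satisfied by $B$. The $D$-odd case, by contrast, is short once one observes the correlation between $v_2(v)$ and $p \bmod 4$ coming from \eqref{eq:cond}.
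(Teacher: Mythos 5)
Your overall skeleton coincides with the paper's: reduce via Lemma~\ref{lem:thetamod8} to showing $v_2(v) + v_2\bigl(\tfrac{C}{Np} - A\bigr) \geq 3$, identify $v_2\bigl(\tfrac{C}{Np} - A\bigr)$ with $v_2(r - p)$ using $A\tfrac{C}{N} \equiv r \pmod 8$, and handle $D$ odd through the correlation between $v_2(v)$ and $p \bmod 4$ coming from \eqref{eq:cond}. Your case (a) is correct and is essentially the paper's argument, and case (b) also goes through: $32 \mid D$ puts $D/4 \equiv 0 \pmod 8$ into case (iv) of the generic-character list, so Theorem~\ref{thm-Dirichlet} forces $\chi_4(p) = \chi_8(p) = 1$, hence $p \equiv 1 \pmod 8$ and $v_2(r-p) \geq 3$ outright.

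The gap is in case (c). For $16 \mid\mid D$ one has $D/4 \equiv 4 \pmod 8$, which falls under case (i) of the generic-character list: \emph{only} $\chi_4(p)$ is a generic character there, so Theorem~\ref{thm-Dirichlet} yields merely $p \equiv 1 \pmod 4$ and does \emph{not} pin $p$ down modulo $8$ --- your key enabling claim fails exactly in this case. A priori $p \equiv 1 \pmod 8$ remains possible, and then $v_2(r - p) = v_2(5 - p) = 2$ exactly, so your bound would require $v$ even; but nothing in your sketch excludes $v$ odd, and the parity of $v$ cannot be ``read off'' from \eqref{eq:cond} independently. The paper closes this case by the reverse inference: dispose of $v$ even immediately (then $v_2(v) + v_2(r-p) \geq 1 + 2 = 3$ using only $\chi_4(p)=1$), and for $v$ odd insert the congruences on $B \bmod 8$ ($4 \mid\mid B$ if $16 \mid N$, $8 \mid B$ if $4 \mid\mid N$ --- precisely the ones you plan to derive from the solvability analysis) together with $AC \equiv rN \pmod 8$ into $p \equiv u^2 - uvB + v^2AC \pmod 8$ from \eqref{eq:cond}, obtaining $p \equiv 5 \equiv r \pmod 8$ and hence $8 \mid \tfrac{C}{Np} - A$ regardless of $v$. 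So \eqref{eq:cond} is used to compute $p \bmod 8$ from the assumed parity of $v$, not the parity of $v$ from a known $p$; without this step your plan stalls at the sub-case $16 \mid\mid D$, $p \equiv 1 \pmod 8$, $v$ odd, which only the norm-equation computation shows to be vacuous.
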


\begin{proof}
Since $4 \mid N \mid C$, we have $p \equiv u (u - v B) \pmod 4$ by
\eqref {eq:cond}.

For $D$ odd, $B$ is odd and $v$ is even as seen in
Proposition~\ref {prop:44}. If $v$ is divisible by $4$, then
$\theta$ is divisible by~$8$ by Lemma~\ref {lem:thetamod8}. If
$2 \mid\mid v$, then $p \equiv 3 \pmod 4$; if furthermore $r \equiv 3 \pmod 4$,
then $4 \mid \frac {C}{N p} - A$, and $8 \mid \theta$ by
Lemma~\ref {lem:thetamod8}.

In the remaining cases of the proposition, $16 \mid D$, $4 \mid B$,
$r \equiv 1 \pmod 4$
and $p \equiv 1 \pmod 4$. If $v$ is even, Lemma~\ref {lem:thetamod8}
implies that $8 \mid \theta$. From now on, we assume that $v$ is odd.
Then $p = u^2 - u v B + A C \pmod 8$, and we need to verify that
$8 \mid \frac {C}{N p} - A$.

The results now follow from close inspection of
$$AC \equiv r N \pmod 8
\text { and }
\left( \frac {B}{4} \right)^2 \equiv \frac {D}{16} + r \frac {N}{4}
\pmod 8.$$

Consider first the case $r = 1$ and $32\mid D$. By
Theorem~\ref{thm-Dirichlet}, we have $\chi_4(p)=\chi_8(p)=1$,
which yields $p\equiv 1\bmod 8$ and implies the desired divisibility
of $\frac {C}{N p} - A$ by~$8$.

Consider now $r = 5$; it is sufficient to show that $p \equiv 5 \pmod 8$.
If $16 \mid\mid D$ and $16 \mid N \mid C$, then $B \equiv 4 \pmod 8$
and $p \equiv 5 \pmod 8$. If $16 \mid\mid D$ and $4 \mid\mid N$, then
$AC \equiv 4 \pmod 8$ and $32 \mid D + 4 r N$, whence $8 \mid B$ and
$p \equiv 5 \pmod 8$.
\end{proof}

%%%%% S
\section{Lower powers of \texorpdfstring {$\w_N$}{wN}}
\label{sct:specialization}

The aim of this section is to determine conditions under which singular
values of lower powers of $\w_N$ than those given in Theorem~\ref {th:full}
yield class invariants. When $N$ is not a square, only even powers are
possible by Theorems~\ref {th:F24N} and~\ref {th:main}. So we specialise
the propositions of \S\ref{sct:lowerbis} according to the value of
$N \pmod {12}$. When $N$ is a square, odd powers may yield class invariants,
and we need to distinguish more finely modulo~$24$. Note that then
$N \in \{0, 1, 4, 9, 12, 16 \} \pmod {24}$.

Throughout this section, we use the notation of Theorem~\ref {th:full}.
The number $\alpha$ is a root of the quadratic form $[A, B, C]$ of
discriminant $D$ and $N$ is an integer such that $A$ is prime to $N$ and
$B$ is a square root of $D$ modulo~$4 N$ according to
Lemma~\ref {lemma:eq2}, so that $N \mid C$. The canonical power
$s$ such that $\w_N^s (\alpha)$ is a class invariant, that is,
$\w_N^s (\alpha) \in \K_D$, is defined as
in Theorem~\ref {th:full}, and we wish to determine the minimal
exponent $e$ such that $\w_N^e (\alpha)$ is still a class invariant.
The general procedure is as follows: Given the value of $N$,
we assemble the propositions of \S\ref{sct:lowerbis} (using
their code names throughout) and deduce from them conditions on $B$
as well as the period of $D$ for which class invariants are
obtained. In general, we can combine a condition on $B$ related to
$\theta\bmod 3$ and another one related to $\theta\bmod 2^{\xi}$. The
Chinese remainder theorem is then used to find compatible values.
When no particular condition modulo~$3$ or powers of~$2$ is imposed,
that is, $e$ and $s$ have the same $3$-adic or $2$-adic valuation,
then Theorem~\ref {th:full} already leads to the desired conclusion.

Once a power $\w_N^e (\alpha)$ is identified as a class invariant,
its conjugates may be obtained by an $M$-system for $M = \frac {s}{e} N$
containing $[A, B, C]$ as shown through Theorems~\ref {th:N-system}
and~\ref {th:F24N}. In more detail, one may proceed as follows:
\begin {enumerate}
\item
Determine a form $[A, B, C]$ with root $\alpha$ satisfying
$\gcd (A, M) = 1$ and the constraint on $B$ so that
$\w_N^e (\alpha)$ is a class invariant; in general, one may
choose $A = 1$.
\item
Enumerate all reduced forms $[a_i, b_i, c_i]$, $i = 1, \ldots, h (D)$
of discriminant $D$, numbered in such a way that
$[a_1, b_1, c_1] \equiv [A, B, C]$.
\item
Let $[A_1, B_1, C_1] = [A, B, C]$.
For $i \geq 2$, find a form $[A_i, B_i, C_i] \equiv [a_i, b_i, c_i]$
such that $\gcd (A_i, M) = 1$ and $B_i \equiv B \pmod {2 M}$, using,
for instance, the algorithm of \cite[Prop.~3]{Schertz02},
\cite[Th.~3.1.10]{Schertz09}.
\end {enumerate}
Then a floating point approximation of the class polynomial can be
computed as
\[
\prod_{i = 1}^{h_D} \big( X - \w_N^e (\alpha_i) \big)
\]
with $\alpha_i = \frac {- B_i + \sqrt D}{2 A_i}$.
Using the algorithms of \cite{Enge09}, one obtains a quasi-linear
complexity in the total size of the class polynomial.

Note that the conditions on $B$ of \S\ref {sct:lowerbis} can be
summarised as $B^2 \equiv D + 4 r N \pmod {4 R N}$, where $r$ is
defined modulo~$R$ and the only primes dividing $R$ are $2$ and $3$.
For the sake of brevity, we denote such a
condition by $r$:$R$. So if no particular condition
beyond $B^2 \equiv D \pmod {4 N}$ is required, this is denoted by
0:1.

We will give more details for the first non-trivial cases and be
briefer in the sequel, since the results rapidly become unweildy. We
add numerical examples for these cases.

%%%%%%%%%% SS
\subsection{The case \texorpdfstring {$N$}{N} odd}
\label {sct:Nodd}

%%%%%%%%%%%%%%% SSS
\subsubsection{\texorpdfstring {$N\not\equiv 0\bmod 3$}{N!=0 mod 3}}

This is the simplest case. We may use PROP32, PROP21 or both
of them.
Whenever $N \equiv 2 \pmod 3$ and $3 \nmid D$, then PROP32 applies;
moreover, the resulting condition $3 \mid B$ is automatically satisfied,
and we gain a factor of $3$ in the exponent. Similarly if $D$ is odd,
then PROP21 applies without any restriction on $B$, and we gain a factor
of $2$ in the exponent.

\begin{center}
\begin{tabular}{|r|r||l|l||r||l|}\hline
$N \bmod {12}$ &  $s$ & $B$   & $D$               & $e$ & proposition(s) \\ \hline
$  5$          &  $6$ & 1:3 & $D\equiv 1\bmod 3$       & $2$ & PROP32 \\
$  5$          &  $6$ & 2:3 & $D\equiv 2\bmod 3$       & $2$ & PROP32 \\
\hline
$  7$          &  $4$ & 0:1   & $2 \nmid D$       & $2$ & PROP21 \\
\hline
$ 11$          & $12$ & 0:1   & $2 \nmid D$       & $6$ & PROP21 \\
$ 11$          & $12$ & 1:3 & $D\equiv 1\bmod 3$       & $4$ & PROP32 \\
$ 11$          & $12$ & 2:3 & $D\equiv 2\bmod 3$       & $4$ & PROP32 \\
$ 11$          & $12$ & 1:3 & $D\equiv 1\bmod 6$ & $2$ & PROP32+PROP21 \\
$ 11$          & $12$ & 2:3 & $D\equiv 5\bmod 6$ & $2$ & PROP32+PROP21 \\
\hline
\end{tabular}
\end{center}
Letting $D = c^2\Delta$, we put $\omega = \sqrt{\Delta/4}$ if $4 \mid
\Delta$ and $\omega = (1+\sqrt{\Delta})/2$ otherwise. Here are some
numerical examples:
$$\begin{array}{|r|c|r|l|}\hline
N & f & -D & H_{D}[f] \\ \hline
5 & \w_{5}^{2} & 11 &
X-\omega-1
\\
5 & \w_{5}^{2} & 4 &
X-1-2\,\omega
\\
\hline
7 & \w_{7}^{2} & 3 &
X-3\,\omega+1
\\
\hline
11 & \w_{11}^{6} & 39 &
{X}^{4}+ \left( 27\,\omega-73 \right) {X}^{3}+ \left( 1656\,\omega-
8914 \right) {X}^{2}\\
&&&+ \left( 7947\,\omega-139058 \right) X-515016\,
\omega+1000693
\\
11 & \w_{11}^{4} & 8 &
X+7+6\,\omega
\\
11 & \w_{11}^{4} & 28 &
X+8\,\omega-7
\\
11 & \w_{11}^{2} & 11 &
X-2\,\omega+1
\\
11 & \w_{11}^{2} & 7 &
X-2\,\omega+3
\\
\hline
\end{array}$$

%%%%%%%%%% SS
\subsubsection{The case \texorpdfstring {$N\equiv 3\pmod {12}$}{N=3 mod 12}}

The situation becomes more intricate when $\gcd (N, 6) \neq 1$.
For $N \equiv 3 \pmod {12}$, we have $s=12$, and $N$ cannot be a square.
Therefore we need an even exponent~$e$.
Since already the full power $\w_N^{12}$ can only be used when
$D$ is a square modulo $4N$, we only have to consider
$D \in \{ 0, 1, 4, 9 \} \pmod {12}$. Then PROP30 applies; moreover,
PROP21 applies whenever $D$ is odd, resulting in the following table.

\begin{center}
\begin{tabular}{|r|r||l|l||r||l|}\hline
$N\bmod {12}$ &  $s$ & $B$ & $D \bmod 12$ & $e$ & propositions(s)  \\
\hline
$3$           & $12$ & 0:1 & $1, 9$       & $6$ & PROP21           \\
$3$           & $12$ & 1:3 & $0, 9$       & $4$ & PROP30(a)        \\
$3$           & $12$ & 2:3 & $1, 4$       & $4$ & PROP30(b)        \\
$3$           & $12$ & 1:3 & $9$          & $2$ & PROP30(a)+PROP21 \\
$3$           & $12$ & 2:3 & $1$          & $2$ & PROP30(b)+PROP21 \\
\hline
\end{tabular}
\end{center}

The entries in the first and last line for $D \equiv 1 \pmod {12}$
may seem redundant; but note that they induce differently severe
restrictions on $B$. The entry $D \equiv 1 \pmod {12}$ in the third
line, as well as $D \equiv 9 \pmod {12}$ in the second line, are
redundant, however: Since PROP21 does not induce any additional
restriction on $B$, the lower exponent is available for precisely
the same quadratic forms. In the following, we will present only
tables that have been reduced accordingly.

However, the previous table does not yet contain the full truth.
A line in the table means that if there is a solution to
$B^2 \equiv D + 4rN \pmod {4RN}$ with $D$ in the given residue
class $D_0$ modulo $12$, then $\w_N^e$ yields a class invariant.
Examining this equation modulo the part of $4 R N$ that contains
only $2$ and $3$ yields further restrictions. Write $N = N_6 N'$
such that the only primes dividing $N_6$ are $2$ and $3$ and
$\gcd (N', 6) = 1$. Then we need to ensure that $D + 4 r N \equiv D$
is a square modulo $N'$; this is guaranteed by Lemma~\ref {lemma:eq2},
since otherwise we would not even consider the full power $\w_N^s$.
We furthermore need to examine under which conditions
\[
D + 4 N_6 r N' \text { is a square modulo $4 R N_6$ and }
D \equiv D_0 \pmod {12}.
\]
Concerning the second to last line, for instance, the condition
becomes
\[
D + 12 \, \frac {N}{3}  \text { is a square modulo $36$ and }
D \equiv 9 \pmod {12}.
\]
Thus,
$D + 12 \, \frac {N}{3} \equiv 9 \pmod {36}$, and
depending on $\frac {N}{3} \bmod 3$, only one value of
$D \pmod {36}$ remains.

For $N= 3$, for instance, or more generally
$\frac {N}{3} \equiv 1 \pmod 3$, we obtain the following
class invariants.

\begin{center}
\begin{tabular}{|l|l|r|}\hline
$B$ & $D\bmod 36$ &  $e$ \\\hline
0:1 & $0, 12$     & $12$ \\
0:1 & $9, 21$     &  $6$ \\
1:3 & $24$        &  $4$ \\
2:3 & $4, 16, 28$ &  $4$ \\
1:3 & $33$        &  $2$ \\
2:3 & $1, 13, 25$ &  $2$ \\
\hline
\end{tabular}
\end{center}
To illustrate this, we give the following table of examples:
% everything happens mod M=36
$$\begin{array}{|r|c|r|l|}\hline
N & f & -D & H_{D}[f] \\ \hline
3 & \w_{3}^{12} & 24 &
{X}^{2}-162\,X+729
\\
3 & \w_{3}^{6} & 15 &
{X}^{2}-3\, \left( 2\,\omega-1 \right) X-27
\\
3 & \w_{3}^{4} & 12 &
    X-3
\\
3 & \w_{3}^{4} & 8 &
    X-1-2\,\omega
\\
3 & \w_{3}^{2} & 3 &
    X-\omega-1
\\
3 & \w_{3}^{2} & 11 &
    X-\omega
\\
\hline
\end{array}$$
%%%%%%%%%% SS
\subsubsection{The case \texorpdfstring {$N\equiv 9\bmod 12$}{N=9 mod 12}}

We have $s = 3$ for squares in that family (for instance, $N = 3^{2n}$)
and may then reach $\w_N$. Otherwise, $s= 6$, and the only possible
smaller exponent is $2$.

\begin{center}
\begin{tabular}{|r|r||l|l||r||l|}\hline
$N$ & $s$
  & $B$
  & $D$
  & $e$ & propositions(s) \\ \hline
$9 \bmod 12$, $\neq\Box$ & 6 & 1:3 & $0\bmod 3$ & 2 & PROP30a \\
$9 \bmod 12$, $\neq\Box$ & 6 & 2:3 & $1\bmod 3$ & 2 & PROP30b \\
$9 \bmod 12$, $=\Box$ & 3 & 1:3 & $0\bmod 3$ & 1 & PROP30a    \\
$9 \bmod 12$, $=\Box$ & 3 & 2:3 & $1\bmod 3$ & 1 & PROP30b    \\
\hline
\end{tabular}
\end{center}
We give two examples, one for $N=21$, the second for $N=9$. For the
former, we find
\begin{center}
\begin{tabular}{|l|l|r|}\hline
$B$ & $D\bmod 252$ & $e$ \\
\hline
0:1& 0, 9, 21, 36, 57, 72, 81, 84, 93, 120, 144, 156, 165, 189, 225,
228 & 6 \\
1:3& 60, 105, 141, 168, 177, 204, 240, 249 & 2 \\
2:3& 1, 4, 16, 25, 28, 37, 49, 64, 85, 88, 100, 109, 112, 121, 133, 148,
& 2 \\
   & 169, 172, 184, 193, 196, 205, 217, 232  & \\
\hline
\end{tabular}
\end{center}
% everything happens mod M=252
$$\begin{array}{|r|c|r|l|}\hline
N & f & -D & H_{D}[f] \\ \hline
21 & \w_{21}^{6} & 24 &
    {X}^{2}+ \left( 108+102\,\omega \right) X-6345+2754\,\omega
\\
21 & \w_{21}^{2} & 3 &
    X+\omega+4
\\
21 & \w_{21}^{2} & 20 &
    {X}^{2}+ \left( -2+4\,\omega \right) X-19-4\,\omega
\\
\hline
\end{array}$$
For $N = 9$, we get:
\begin{center}
\begin{tabular}{|l|l|r|}\hline
$B$ & $D\bmod 108$ & $e$ \\
\hline
0:1& 9, 36 & 3 \\
1:3& 0, 45, 72, 81 & 1 \\
2:3& 1, 4, 13, 16, 25, 28, 37, 40, 49, 52, & 1 \\
   & 61, 64, 73, 76, 85, 88, 97, 100 & \\
\hline
\end{tabular}
\end{center}
% everything happens mod M=108
$$\begin{array}{|r|c|r|l|}\hline
N & f & -D & H_{D}[f] \\ \hline
9 & \w_{9}^{3} & 72 &
    {X}^{2}-18\,X+27
\\
9 & \w_{9} & 27 &
    X-\omega-1
\\
9 & \w_{9} & 8 &
    X-1-\omega
\\
\hline
\end{array}$$

%%%%%%%%%% SS
\subsection{The case \texorpdfstring {$N$}{N} even}

A look at \S\ref{sct:lowerbis} immediately shows the complexity
of the results when $N$ is even. We distinguish the cases
$\lambda = 1$ (in which $N$ cannot be a square) and
$\lambda \geq 2$ with $N$ a square or not.

%%%%%%%%%%%%%%% SSS
\subsubsection{The case \texorpdfstring {$\lambda = 1$}{lambda=1}}
\label{sssct:lambda1}

Three values are concerned, namely $N\bmod 12 \in \{2, 6, 10\}$. We
have $s = 24$ for $N \bmod 12 \in \{ 2, 6 \}$, whereas $s = 8$
for $N \equiv 10 \pmod {12}$.

\begin{center}
\small
\begin{tabular}{|@{}r|r@{\,}||l|l@{\,}||r@{\,}||l@{}|}\hline
\tiny $N\bmod12$ &  $s$ & $B$          & $D$                            &  $e$ & proposition(s) \\
\hline
 $2$       & $24$ & 1:2          & ---                            & $12$ & PROP20a \\
 $2$       & $24$ & 0:2          & $1\bmod8$                      & $12$ & PROP20b \\
 $2$       & $24$ & 1:3          & $1 \bmod 3$                    &  $8$ & PROP32 \\
 $2$       & $24$ & 2:3          & $2 \bmod 3$                    &  $8$ & PROP32 \\
 $2$       & $24$ & 1:4          & $1$, $4\bmod8$; $0\bmod16$     &  $6$ & PROP44  \\
 $2$       & $24$ & 3:4          & $1\bmod8$; $8\bmod16$          &  $6$ & PROP412ab \\
 $2$       & $24$ & 1:2$\cap$1:3 & $1 \bmod 3$                    &  $4$ & PROP20a+PROP32\\
 $2$       & $24$ & 1:2$\cap$2:3 & $2 \bmod 3$                    &  $4$ & PROP20a+PROP32\\
 $2$       & $24$ & 0:2$\cap$1:3 & $1 \bmod 24$                   &  $4$ & PROP20b+PROP32\\
 $2$       & $24$ & 0:2$\cap$2:3 & $17 \bmod 24$                  &  $4$ & PROP20b+PROP32\\
 $2$       & $24$ & 1:4$\cap$1:3 & $1$, $4\bmod24$; $16\bmod48$   &  $2$ & PROP44+PROP32\\
 $2$       & $24$ & 1:4$\cap$2:3 & $17$, $20\bmod24$; $32\bmod48$ &  $2$ & PROP44+PROP32\\
 $2$       & $24$ & 3:4$\cap$1:3 & $1\bmod24$; $40\bmod48$        &  $2$ & PROP412ab+PROP32\\
 $2$       & $24$ & 3:4$\cap$2:3 & $17\bmod24$; $8\bmod48$        &  $2$ & PROP412ab+PROP32\\
\hline
 $6$       & $24$ & 1:2          & ---                            & $12$ & PROP20a \\
 $6$       & $24$ & 0:2          & $1\bmod8$                      & $12$ & PROP20b \\
 $6$       & $24$ & 1:3          & $0\bmod3$                      &  $8$ & PROP30a\\
 $6$       & $24$ & 2:3          & $1\bmod3$                      &  $8$ & PROP30b\\
 $6$       & $24$ & 1:4          & $1$, $4\bmod8$; $0\bmod16$     &  $6$ & PROP44  \\
 $6$       & $24$ & 3:4          & $1\bmod8$; $8\bmod16$          &  $6$ & PROP412ab \\
 $6$       & $24$ & 1:2$\cap$1:3 & $0\bmod3$                      &  $4$ & PROP20a+PROP30a\\
 $6$       & $24$ & 1:2$\cap$2:3 & $1\bmod3$                      &  $4$ & PROP20a+PROP30b\\
 $6$       & $24$ & 0:2$\cap$1:3 & $9 \bmod 24$                   &  $4$ & PROP20b+PROP30a\\
 $6$       & $24$ & 0:2$\cap$2:3 & $1 \bmod 24$                   &  $4$ & PROP20b+PROP30b\\
 $6$       & $24$ & 1:4$\cap$1:3 & $9$, $12\bmod24$; $0\bmod48$   &  $2$ & PROP44+PROP30a\\
 $6$       & $24$ & 1:4$\cap$2:3 & $1$, $4\bmod24$; $16\bmod48$   &  $2$ & PROP44+PROP30b\\
 $6$       & $24$ & 3:4$\cap$1:3 & $9\bmod24$; $24\bmod48$        &  $2$ & PROP412ab+PROP30a\\
 $6$       & $24$ & 3:4$\cap$2:3 & $1\bmod24$; $40\bmod48$        &  $2$ & PROP412ab+PROP30b\\
\hline
$10$       &  $8$ & 1:2          & ---                            &  $4$ & PROP20a \\
$10$       &  $8$ & 0:2          & $1 \bmod 8$                    &  $4$ & PROP20b \\
$10$       &  $8$ & 1:4          & $1$, $4\bmod8$; $0\bmod16$     &  $2$ & PROP44  \\
$10$       &  $8$ & 3:4          & $1\bmod8$; $8\bmod16$          &  $2$ & PROP412ab \\
\hline
\end{tabular}
\end{center}

The case $N=2$ corresponds to Weber's classical functions. We present the
case $N=6$ in more detail, illustrating the complexity of the process.

\begin{center}
\begin{tabular}{|l|l|r|}\hline
$B$ & $D\bmod 288$ & $e$ \\\hline
0:1& 0, 36, 96, 132, 144, 180, 240, 276 & 24 \\
1:2& 60, 252 & 12 \\
1:3& 48, 84, 192, 228 & 8 \\
2:3& 4, 16, 52, 64, 100, 112, 148, 160, 196, 208, 244, 256 & 8 \\
3:4& 24, 72, 168, 216 & 6 \\
1:4& 9, 33, 81, 105, 153, 177, 225, 249 & 6 \\
1:4& 108, 204 & 6 \\
1:2 $\cap$ 1:3 & 156 & 4 \\
1:2 $\cap$ 2:3 & 28, 124, 220 & 4 \\
3:4 $\cap$ 1:3 & 120, 264 & 2 \\
1:4 $\cap$ 1:3 & 57, 129, 201, 273 & 2 \\
1:4 $\cap$ 1:3 & 12 & 2 \\
3:4 $\cap$ 2:3 & 40, 88, 136, 184, 232, 280 & 2 \\
1:4 $\cap$ 2:3 & 1, 25, 49, 73, 97, 121, 145, 169, 193, 217, 241, 265 & 2 \\
1:4 $\cap$ 2:3 & 76, 172, 268 & 2 \\
\hline
\end{tabular}
\end{center}

% everything happens mod M=288
$$\begin{array}{|r|c|r|l|}\hline
N & f & -D & H_{D}[f] \\ \hline
6 & \w_{6}^{24} & 12 &
X+186624
\\
6 & \w_{6}^{12} & 36 &
{X}^{2}-3888\,\omega\,X+1259712
\\
6 & \w_{6}^{8} & 60 &
{X}^{2}+ \left( 432\,\omega-720 \right) X+20736
\\
6 & \w_{6}^{8} & 32 &
{X}^{2}+ \left( 112+64\,\omega \right) X-1088-3584\,\omega
\\
6 & \w_{6}^{6} & 72 &
{X}^{2}-216\,X-5832
\\
6 & \w_{6}^{6} & 39 &
{X}^{4}+ \left( 3\,\omega-42 \right) {X}^{3}+ \left( 486\,\omega+108
 \right) {X}^{2} \\
&&&+ \left( -648\,\omega+9072 \right) X+6561\,\omega- 45198
\\
6 & \w_{6}^{6} & 84 &
{X}^{4}+ \left( 324+60\,\omega \right) {X}^{3}+14688\,{X}^{2}\\
&&&+ \left( 69984-12960\,\omega \right) X+46656
\\
6 & \w_{6}^{4} & 132 &
{X}^{4}+ \left( 144-12\,\omega \right) {X}^{3}+2196\,{X}^{2}\\
&&&+ \left( 5184+432\,\omega \right) X+1296
\\
6 & \w_{6}^{4} & 68 &
{X}^{4}+ \left( -32+4\,\omega \right) {X}^{3}+ \left( -204-96\,\omega
 \right) {X}^{2}\\
&&&+ \left( 1152-144\,\omega \right) X-752+256\,\omega
\\
6 & \w_{6}^{2} & 24 &
{X}^{2}-\omega\,X-6
\\
6 & \w_{6}^{2} & 15 &
{X}^{2}+ \left( -2\,\omega-2 \right) X+3\,\omega-3
\\
6 & \w_{6}^{2} & 276 &
{X}^{8}+ \left( -12-4\,\omega \right) {X}^{7}+ \left( 132+6\,\omega
 \right) {X}^{6}\\
&&&-144\,{X}^{5}-576\,{X}^{4}-864\,{X}^{3}+ \left( 4752-
216\,\omega \right) {X}^{2}\\
&&&+ \left( -2592+864\,\omega \right) X+1296
\\
6 & \w_{6}^{2} & 8 &
X+2+\omega
\\
6 & \w_{6}^{2} & 23 &
{X}^{3}-6\,{X}^{2}+ \left( -\omega+15 \right) X+\omega-15
\\
6 & \w_{6}^{2} & 20 &
{X}^{2}+ \left( 2-2\,\omega \right) X-4-2\,\omega
\\
\hline
\end{array}$$

%%%%%%%%%% SS
\subsubsection{The case \texorpdfstring {$\lambda \geq 2$}{lambda>=2}}

We have to study three values of $N\bmod 12$, namely, $0$, $4$ and $8$,
for which $s = 24$, $8$, and $24$, respectively. The cases $N \equiv 0$ or
$4$ authorise
squares, so that the results become somewhat lengthy.

When $N\equiv 4\bmod 12$, we find
\begin{center}
\begin{tabular}{|r|r||l|l||r||l|}\hline
$N$ &  $s$ & $B$          & $D$                            &
$e$ & proposition(s) \\
\hline
$4\bmod12$ & $8$ & 1:2 & --- & $4$ & PROP20a\\
$4\bmod12$ & $8$ & 1:2 & $1\bmod 8$ & $4$ & PROP20b\\
$4\bmod12$ & $8$ & 1:4 & $1\bmod 8$ & $2$ & PROP44a\\
$4\bmod12$ & $8$ & 1:4 & $0\bmod 16$ & $2$ & PROP44b\\
$4\bmod12$ & $8$ & 3:4 & $1\bmod 8$ & $2$ & PROP412a\\
$4\bmod12$ & $8$ & 3:4 & $4\bmod 8$ & $2$ & PROP412c\\
\hline
$4\bmod12$, $= \Box$ & $8$ & 3:8 & $1\bmod 8$ & $1$ & PROP8a\\
$4\bmod12$, $= \Box$ & $8$ & 7:8 & $1\bmod 8$ & $1$ & PROP8a\\
$4\bmod12$, $= \Box$ & $8$ & 1:8 & $0\bmod 32$ & $1$ & PROP8b\\
$4\bmod12$, $= \Box$ & $8$ & 5:8 & $16\bmod 32$ & $1$ & PROP8c\\
\hline
\end{tabular}
\end{center}

When $N\equiv 8\bmod 12$, it cannot be a square, and the results are:
\begin{center}
\begin{tabular}{|r|r||l|l||r||l|}\hline
$N\bmod12$ &  $s$ & $B$          & $D$                            &  $e$ & proposition(s) \\
\hline
$8$ & $24$ & 1:2 & --- & $12$ & PROP20a\\
$8$ & $24$ & 1:2 & $1\bmod 8$ & $12$ & PROP20b\\
$8$ & $24$ & 1:4 & $1\bmod 8$ & $6$ & PROP44a\\
$8$ & $24$ & 1:4 & $0\bmod 16$ & $6$ & PROP44b\\
$8$ & $24$ & 3:4 & $1\bmod 8$ & $6$ & PROP412a\\
$8$ & $24$ & 3:4 & $4\bmod 8$ & $6$ & PROP412c\\
$8$ & $24$ & 1:3 & $1\bmod 3$ & $8$ & PROP32\\
$8$ & $24$ & 2:3 & $2\bmod 3$ & $8$ & PROP32\\
$8$ & $24$ & 1:2 $\cap$ 1:3 & $1\bmod 3$ & $4$ & PROP20a+PROP32\\
$8$ & $24$ & 1:2 $\cap$ 2:3 & $2\bmod 3$ & $4$ & PROP20a+PROP32\\
$8$ & $24$ & 1:2 $\cap$ 1:3 & $1\bmod 24$ & $4$ & PROP20b+PROP32\\
$8$ & $24$ & 1:2 $\cap$ 2:3 & $17\bmod 24$ & $4$ & PROP20b+PROP32\\
$8$ & $24$ & 1:4 $\cap$ 1:3 & $1\bmod 24$ & $2$ & PROP44a+PROP32\\
$8$ & $24$ & 1:4 $\cap$ 2:3 & $17\bmod 24$ & $2$ & PROP44a+PROP32\\
$8$ & $24$ & 1:4 $\cap$ 1:3 & $16\bmod 48$ & $2$ & PROP44b+PROP32\\
$8$ & $24$ & 1:4 $\cap$ 2:3 & $32\bmod 48$ & $2$ & PROP44b+PROP32\\
$8$ & $24$ & 3:4 $\cap$ 1:3 & $1\bmod 24$ & $2$ & PROP412a+PROP32\\
$8$ & $24$ & 3:4 $\cap$ 2:3 & $17\bmod 24$ & $2$ & PROP412a+PROP32\\
$8$ & $24$ & 3:4 $\cap$ 1:3 & $4\bmod 24$ & $2$ & PROP412c+PROP32\\
$8$ & $24$ & 3:4 $\cap$ 2:3 & $20\bmod 24$ & $2$ & PROP412c+PROP32\\
\hline
\end{tabular}
\end{center}
Finally, for $N\equiv 0\bmod 12$, we obtain the following results:
\begin{center}
\begin{tabular}{|r|r||l|l||r||l|}\hline
$N$ &  $s$ & $B$          & $D$                            &  $e$ & proposition(s) \\
\hline
$12$ & $24$ & 1:2 & --- & $12$ & PROP20a\\
$12$ & $24$ & 1:2 & $1\bmod 8$ & $12$ & PROP20b\\
$12$ & $24$ & 1:4 & $1\bmod 8$ & $6$ & PROP44a\\
$12$ & $24$ & 1:4 & $0\bmod 16$ & $6$ & PROP44b\\
$12$ & $24$ & 3:4 & $1\bmod 8$ & $6$ & PROP412a\\
$12$ & $24$ & 3:4 & $4\bmod 8$ & $6$ & PROP412c\\
$12$ & $24$ & 1:3 & $0\bmod 3$ & $8$ & PROP30a\\
$12$ & $24$ & 2:3 & $1\bmod 3$ & $8$ & PROP30b\\
$12$ & $24$ & 1:2 $\cap$ 1:3 & $0\bmod 3$ & $4$ & PROP20a+PROP30a\\
$12$ & $24$ & 1:2 $\cap$ 2:3 & $1\bmod 3$ & $4$ & PROP20a+PROP30b\\
$12$ & $24$ & 1:2 $\cap$ 1:3 & $9\bmod 24$ & $4$ & PROP20b+PROP30a\\
$12$ & $24$ & 1:2 $\cap$ 2:3 & $1\bmod 24$ & $4$ & PROP20b+PROP30b\\
$12$ & $24$ & 1:4 $\cap$ 1:3 & $9\bmod 24$ & $2$ & PROP44a+PROP30a\\
$12$ & $24$ & 1:4 $\cap$ 2:3 & $1\bmod 24$ & $2$ & PROP44a+PROP30b\\
$12$ & $24$ & 1:4 $\cap$ 1:3 & $0\bmod 48$ & $2$ & PROP44b+PROP30a\\
$12$ & $24$ & 1:4 $\cap$ 2:3 & $16\bmod 48$ & $2$ & PROP44b+PROP30b\\
$12$ & $24$ & 3:4 $\cap$ 1:3 & $9\bmod 24$ & $2$ & PROP412a+PROP30a\\
$12$ & $24$ & 3:4 $\cap$ 2:3 & $1\bmod 24$ & $2$ & PROP412a+PROP30b\\
$12$ & $24$ & 3:4 $\cap$ 1:3 & $12\bmod 24$ & $2$ & PROP412c+PROP30a\\
$12$ & $24$ & 3:4 $\cap$ 2:3 & $4\bmod 24$ & $2$ & PROP412c+PROP30b\\
\hline
$12$ & $24$ & 3:8 & $1\bmod 8$ & $3$ & PROP8a\\
$12$ & $24$ & 7:8 & $1\bmod 8$ & $3$ & PROP8a\\
$12$ & $24$ & 1:8 & $0\bmod 32$ & $3$ & PROP8b\\
$12$ & $24$ & 5:8 & $16\bmod 32$ & $3$ & PROP8c\\
$12$ & $24$ & 3:8 $\cap$ 1:3 & $9\bmod 24$ & $1$ & PROP8a+PROP30a\\
$12$ & $24$ & 3:8 $\cap$ 2:3 & $1\bmod 24$ & $1$ & PROP8a+PROP30b\\
$12$ & $24$ & 7:8 $\cap$ 1:3 & $9\bmod 24$ & $1$ & PROP8a+PROP30a\\
$12$ & $24$ & 7:8 $\cap$ 2:3 & $1\bmod 24$ & $1$ & PROP8a+PROP30b\\
$12$ & $24$ & 1:8 $\cap$ 1:3 & $0\bmod 96$ & $1$ & PROP8b+PROP30a\\
$12$ & $24$ & 1:8 $\cap$ 2:3 & $64\bmod 96$ & $1$ & PROP8b+PROP30b\\
$12$ & $24$ & 5:8 $\cap$ 1:3 & $48\bmod 96$ & $1$ & PROP8c+PROP30a\\
$12$ & $24$ & 5:8 $\cap$ 2:3 & $16\bmod 96$ & $1$ & PROP8c+PROP30b\\
\hline
\end{tabular}
\end{center}

For $N=4$, these results translate as follows:

\begin{center}
\begin{tabular}{|l|l|r|}\hline
$B$ & $D\bmod 128$ & $e$ \\\hline
0:1& $\equiv 4 \pmod {32}$ & 8 \\
1:2& 16, 32, 80, 96 & 4 \\
3:4& $\equiv 20 \pmod {32}$ & 2 \\
1:4& 64 & 2 \\
3:8& $\equiv 1 \pmod 8$  & 1 \\
1:8& 0 & 1 \\
5:8& $\equiv 48 \pmod {64}$ & 1 \\
\hline
\end{tabular}
\end{center}

% everything happens mod M=128
$$\begin{array}{|r|c|r|l|}\hline
N & f & -D & H_{D}[f] \\ \hline
4 & \w_{4}^{8} & 28 &
X-48\,\omega+32
\\
4 & \w_{4}^{4} & 32 &
{X}^{2}-8\,\omega\,X-16
\\
4 & \w_{4}^{2} & 12 &
X-2\,\omega
\\
4 & \w_{4}^{2} & 64 &
{X}^{2}+ \left( -4-4\,\omega \right) X+4\,\omega
\\
4 & \w_{4} & 7 &
X-\omega
\\
4 & \w_{4} & 128 &
{X}^{4}+ \left( -4-2\,\omega \right) {X}^{3}+6\,\omega\,{X}^{2}+
 \left( 8-4\,\omega \right) X-4
\\
4 & \w_{4} & 16 &
X-1-\omega
\\
\hline
\end{array}$$

The precise results for $N = 16$ are the following:

\begin{center}
\begin{tabular}{|l|l|r|l|}\hline
$B$ & $D\bmod 512$ & $e$ \\\hline
0:1& $\equiv 16 \pmod {128}$ & 8 \\
1:2& 64, 128, 320, 384 & 4 \\
3:4& $\equiv 4 \pmod {32}$ & 2 \\
1:4& 256 & 2 \\
3:8& $\equiv 1\pmod 8$ & 1 \\
1:8& 0, 192, 448 & 1 \\
5:8& $\equiv 80 \pmod {128}$ & 1 \\
\hline
\end{tabular}
\end{center}

% everything happens mod M=512
$$\begin{array}{|r|c|r|l|}\hline
N & f & -D & H_{D}[f] \\ \hline
16 & \w_{16}^{8} & 112 &
{X}^{2}+ \left( 12288\,\omega-8192 \right) X-196608\,\omega-917504
\\
16 & \w_{16}^{4} & 128 &
{X}^{4}+ \left( 128+192\,\omega \right)
{X}^{3}+6656\,\omega\,{X}^{2}\\
&&&+ \left( -32768+49152\,\omega \right) X-65536
\\
16 & \w_{16}^{2} & 28 &
X+2\,\omega-4
\\
16 & \w_{16}^{2} & 256 &
{X}^{4}+ \left( 16-48\,\omega \right) {X}^{3}+ \left( -288+288\,\omega
 \right) {X}^{2}\\
&&&+ \left( 768-256\,\omega \right) X-256\,\omega
\\
16 & \w_{16} & 7 &
X-\omega-1
\\
16 & \w_{16} & 64 &
{X}^{2}-4\,X+4
\\
16 & \w_{16} & 48 &
{X}^{2}+4\,X+4
\\
\hline
\end{array}$$

%%%%%%%%%% SS
\subsection {Reality of class polynomials}

The argumentation of the proof of Theorem~\ref {th:reality} carries over to the
lower powers of $\w_N$ and shows that the class polynomial is real
whenever for some form $[A, B, C]$ in the $\frac {s}{e} N$-system the inverse
form $[A, -B, C]$ satisfies the congruence constraints of the system as well.
This is precisely the case when $B$ is divisible by $\frac
{s}{e}N$. In particular, this implies that $N \mid D$, and inspection
of the previous results proves the following theorem.

\begin {theorem}
\label {th:lowreality}
Under the general assumptions of \S\ref {sct:specialization},
the characteristic polynomial of $\w_N^e (\alpha)$ is real
whenever $N \mid D$ and $\frac {s}{e} N \mid B$.
For $e < s$, this is possible only in the following cases:
\begin {alphenumerate}
\item
$N$ odd:
\begin{center}
\begin{tabular}{|r|r||l|l||r|}\hline
$N$                    &  $s$ & $B$ & $D$             & $e$ \\
\hline
$ 5 \bmod 12$          &  $6$ & $1$:$3$ & $1\bmod 3$  & $2$ \\
$ 5 \bmod 12$          &  $6$ & $2$:$3$ & $2\bmod 3$  & $2$ \\
\hline
$11 \bmod 12$          & $12$ & $1$:$3$ & $1\bmod 3$  & $4$ \\
$11 \bmod 12$          & $12$ & $2$:$3$ & $2\bmod 3$  & $4$ \\
\hline
$ 3 \bmod 12$          & $12$ & $1$:$3$ & $6 \bmod 9$ & $4$ \\
\hline
$9\bmod12$, $\neq\Box$ &  $6$ & $1$:$3$ & $18 \bmod 27$ & $2$ \\
$9\bmod12$, $= \Box$   &  $3$ & $1$:$3$ & $18 \bmod 27$ & $1$ \\
\hline
\end {tabular}
\end {center}
\item
$2 \mid\mid N$ and $4 \mid D$
\begin {alphenumerate}
\item
$\frac {s}{e}$ is even and $8 \mid\mid D$
\item
$\frac {s}{e} = 3$
\end {alphenumerate}
\item
$4 \mid N$ and $16 \mid D$
\end {alphenumerate}
\end {theorem}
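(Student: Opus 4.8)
The plan is to split the statement into a qualitative reality criterion, which follows at once from the argument of Theorem~\ref{th:reality}, and a quantitative enumeration, which is a row-by-row inspection of the tables of \S\ref{sct:specialization}.

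First I would establish the criterion, transporting the proof of Theorem~\ref{th:reality} from the full power $s$ to a divisor $e$. By Theorems~\ref{th:F24N} and~\ref{th:N-system}, the conjugates of $\w_N^e(\alpha)$ are the values $\w_N^e(\alpha_i)$ on a $\frac{s}{e}N$-system, whose forms $[A_i,B_i,C_i]$ satisfy $B_i\equiv B\pmod{2\,\frac{s}{e}N}$. Since $\w_N$, and hence $\w_N^e$, has a rational $q$-expansion and $q(\overline{-\alpha_i})=\overline{q(\alpha_i)}$, one has $\overline{\w_N^e(\alpha_i)}=\w_N^e(\overline{-\alpha_i})=\w_N^e\!\left(\frac{B_i+\sqrt D}{2A_i}\right)$; that is, complex conjugation carries the value at $[A_i,B_i,C_i]$ to the value at the inverse form $[A_i,-B_i,C_i]$. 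The inverse form lies in the same $\frac{s}{e}N$-system exactly when $-B_i\equiv B\pmod{2\,\frac{s}{e}N}$, which, given $B_i\equiv B$, is equivalent to $\frac{s}{e}N\mid B$. Under this divisibility complex conjugation permutes the complete set of conjugates, so $H_D[\w_N^e]$ is fixed by conjugation and therefore real. Moreover $\frac{s}{e}N\mid B$ gives $N\mid B$, whence $N^2\mid B^2$; combined with $B^2\equiv D\pmod{4N}$, which holds throughout \S\ref{sct:specialization}, this forces $N\mid D$, explaining the blanket hypothesis.

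The substance of the theorem is the claim that, for $e<s$, the extra requirement $\frac{s}{e}N\mid B$ is consistent with an already identified class-invariant case only in the tabulated instances. I would read off each row of \S\ref{sct:specialization} its congruence $r$:$R$, i.e.\ $\frac{B^2-D}{4N}\equiv r\pmod R$, together with its residue class for $D$, and impose in addition $\frac{s}{e}N\mid B$. The latter bounds $v_2(B)$ and $v_3(B)$ from below, hence fixes $B^2$ modulo the relevant powers of $2$ and~$3$; through $\frac{B^2-D}{4N}\equiv r\pmod R$ this pins $D$ down modulo a power of~$2$ times a power of~$3$, and one compares the result with the row's own condition on $D$. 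Where they agree, the pair $(N,e)$ enters the reality tables; where they clash, reality is impossible. For $N=3$ (so $s=12$) the condition reads $\frac{12}{e}\cdot3\mid B$: at $e=4$ this is $9\mid B$, compatible with the row $1$:$3$, $D\equiv24\pmod{36}$ (and $24\equiv6\pmod9$); at $e=2$ it is $18\mid B$, which forces $D\equiv24\pmod{36}$ and so contradicts that row's class $D\equiv33\pmod{36}$. Hence only $e=4$, $D\equiv6\pmod9$ survives for $N\equiv3\pmod{12}$, as stated. Running the same comparison over all residues of $N$ modulo~$12$, refined modulo~$24$ when $N$ is a square (where odd exponents occur), yields the complete lists.

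The main obstacle is the even-$N$ bookkeeping: there the $2$-adic analysis interacts with the square/non-square dichotomy and with PROP20, PROP44, PROP412 and PROP8, so one must track $v_2(B)$, $v_2(D)$ and $v_2(N)=\lambda(N)$ simultaneously to see that $\frac{s}{e}N\mid B$ is attainable exactly when $2\mid\mid N$ and $4\mid D$ (sharpening to $8\mid\mid D$ when $\frac{s}{e}$ is even and to $\frac{s}{e}=3$ otherwise) or when $4\mid N$ and $16\mid D$. For odd $N$ the outcome is by contrast uniform: every surviving row has $\frac{s}{e}=3$, so reality reduces to the factor-$3$ gain $3N\mid B$ furnished by PROP30 and PROP32 together with the $3$-adic congruence on $D$ computed above, while the factor-$2$ gains of PROP21 never survive, since $2\mid B$ would force $D$ even against their odd-$D$ hypothesis; this produces the table of part~(a).
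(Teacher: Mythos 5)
Your proposal is correct and takes essentially the same approach as the paper: the reality criterion is obtained by carrying the proof of Theorem~\ref{th:reality} over to the $\frac{s}{e}N$-system, and the enumeration for $e<s$ combines the congruence $B^2\equiv D+4rN\pmod{4RN}$ (with $R=\frac{s}{e}$) with the divisibility $\frac{s}{e}N\mid B$ to pin down $D$ modulo powers of $2$ and $3$, compared row by row against the tables of \S\ref{sct:specialization}. The paper performs the identical computation, merely packaged through the substitution $B=NRB'$, $D=ND'$ and the resulting congruences modulo $3$ and $4$ (plus the observation that $r$ is odd in the relevant even-$N$ rows), so the two routes coincide.
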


\begin{proof}
We again start from $B^2 \equiv D + 4 r N \pmod {4 R N}$, where
in fact $R = \frac {s}{e}$ is a non-trivial divisor of $24$. Then
the hypotheses of the theorem translate as $B = N R B'$ and
$D = N D'$, so that
\begin{equation}\label{eq:real}
N R^2 {B'}^2\equiv D' + 4 r \pmod {4 R}.
\end{equation}
This immediately implies
\begin {align}
\label{eq:real3}
& D' \equiv -r \pmod 3 & \text { if } 3 \mid R \\
\label{eq:real4}
& 4 \mid D'            & \text { if } 2 \mid R
\end {align}

\begin {alphenumerate}
\item
The assertions are a direct consequence of \eqref{eq:real3} and
\eqref{eq:real4}, together with the tables in \S\ref{sct:Nodd}.

\item
If $N$ is even, from $N \mid D$ we immediately have $4 \mid D$.

If $R$ is even, then moreover \eqref{eq:real4} yields that $8 \mid D$.
Going through the table in \S\ref{sssct:lambda1} shows that then $r$
is odd, and \eqref{eq:real} implies
that $D' \equiv -4r \equiv 4 \pmod 8$ and $8 \mid\mid D$.

\item
If $4 \mid N$, then \eqref{eq:real} shows that $4 \mid D'$, whence $16 \mid D$.
\end {alphenumerate}
\end{proof}

We end this section with related results concerning the functions
$\sqrt{D} \, \w_N^e$. Since $\sqrt {D} \in \OO$, a singular value
$\sqrt{D} \, \w_N^e (\alpha)$ is a class invariant if and only if
$\w_N^e (\alpha)$ is, and integrality of the class polynomial coefficients
carries over. In some cases, however, the additional factor $\sqrt D$
may lead to rational class polynomials.

\begin{lemma}\label{lem:v}
Let $N \not\equiv 1 \pmod 8$,
$\alpha = \frac {-B+\sqrt{D}}{2}$ and $e$ be such that
$\frac {s}{e}$ is even, $\frac {s}{2 e} N \mid B$ and
$\frac {s}{e} N \nmid B$. Then $\w_N(\alpha)^e \in i\R$.
\end{lemma}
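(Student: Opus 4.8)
The plan is to show that $\w_N(\alpha)^e$ equals the negative of its own complex conjugate, which forces it into $i\R$. First I would exploit the rationality of the $q$-expansion of $\w_N$, exactly as in the proof of Theorem~\ref{th:reality}: writing $\w_N(z) = \sum_k c_k e^{2\pi i \beta_k z}$ with $c_k, \beta_k \in \Q$, one reads off $\overline{\w_N(\alpha)} = \w_N(-\overline{\alpha})$. Since $D < 0$ makes $\sqrt D$ purely imaginary, we have $\overline\alpha = \frac{-B - \sqrt D}{2}$ and hence $-\overline\alpha = \frac{B + \sqrt D}{2} = \alpha + B$, with both $\alpha$ and $-\overline\alpha$ lying in $\HH$. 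Thus the entire question reduces to computing the effect of the translation $z \mapsto z + B$ on $\w_N$.

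Next I would evaluate this translation. The hypothesis $\frac{s}{2e}N \mid B$ in particular gives $N \mid B$, so I set $B = mN$. Translation by $N$ is the matrix $\begin{pmatrix} 1 & N \\ 0 & 1 \end{pmatrix} \in \Gamma^0(N)$, and specialising \eqref{eq:epsilon} of Theorem~\ref{th:F24N} to $a = d = 1$, $b_0 = 1$, $c = 0$ yields $\w_N(z + N) = \zeta_{24}^{1 - N}\,\w_N(z)$. Taking $b_0 = m$ instead (or iterating) gives $\w_N(\alpha + B) = \zeta_{24}^{m(1-N)}\,\w_N(\alpha)$, so that
\[
\overline{\w_N(\alpha)^e} = \w_N(\alpha + B)^e = \zeta_{24}^{e m (1 - N)}\,\w_N(\alpha)^e .
\]
It then remains to prove that this root of unity is exactly $-1$, i.e.\ that $e m (N - 1) \equiv 12 \pmod{24}$.

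This congruence is the heart of the matter, and the place where $N \not\equiv 1 \pmod 8$ enters. Writing $g = \gcd(N - 1, 24)$ and $n' = (N-1)/g$, so that $t = 24/g$ and $t(N-1) = 24\,n'$, a short $2$-adic check shows that $N \not\equiv 1 \pmod 8$ forces $t$ to be even (whence $s = t$, since $s = 2t$ requires $t$ odd) and also forces $n'$ to be odd (since $v_2(N-1) \le 2$). The two divisibility hypotheses $\frac{s}{2e}N \mid B$ and $\frac{s}{e}N \nmid B$ translate, through $m = B/N$, into $m = \frac{t}{2e}\,\ell$ with $\ell$ odd. Substituting, $e m (N - 1) = \frac{t(N-1)}{2}\,\ell = 12\,n'\ell$, and since $n'\ell$ is odd this is $\equiv 12 \pmod{24}$; hence $\zeta_{24}^{e m (1 - N)} = \zeta_{24}^{-12} = -1$. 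Therefore $\overline{\w_N(\alpha)^e} = -\,\w_N(\alpha)^e$, i.e.\ $\w_N(\alpha)^e \in i\R$.

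The conjugation identity and the transformation under translation are routine, both being immediate consequences of preceding results. The main obstacle I anticipate is the final arithmetic bookkeeping: one must track the $2$-adic (and implicitly $3$-adic) valuations carefully enough to pin the resulting $24$th root of unity down to exactly $-1$ rather than merely $\pm 1$. The hypothesis $N \not\equiv 1 \pmod 8$ is precisely what simultaneously guarantees $s = t$ and the oddness of $n'$, the two facts that make the parity of $n'\ell$ decide the sign.
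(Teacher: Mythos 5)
Your proof is correct, but it follows a genuinely different mechanism from the paper's, even though the arithmetic core is the same. The paper argues by \emph{direct evaluation}: it splits $\w_N = f_0 f_1$ with $f_0 = q^{-\frac{N-1}{24N}}$ and $f_1$ a power series in $q^{1/N}$ with rational coefficients, observes that $N \mid B$ makes $q^{1/N}(\alpha)$ real (so $f_1(\alpha)^e \in \R$), and then computes the phase of $f_0(\alpha)^e$ outright: it is $i$ raised to the exponent $\frac{s(N-1)}{24}\cdot\frac{2eB}{sN}$, a product of two odd integers under the hypotheses. You instead determine the \emph{ratio} $\overline{\w_N(\alpha)^e}/\w_N(\alpha)^e$: conjugating the rational $q$-expansion (exactly as in the proof of Theorem~\ref{th:reality}) converts complex conjugation into the translation $z \mapsto z + B$, and the transformation law \eqref{eq:epsilon} converts that translation into the root of unity $\zeta_{24}^{em(1-N)}$, which your $2$-adic bookkeeping pins down to $-1$. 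Underneath, both proofs use the identical two facts: the pair of hypotheses $\frac{s}{2e}N \mid B$, $\frac{s}{e}N \nmid B$ produces an odd integer $\ell = \frac{2eB}{sN}$, and $N \not\equiv 1 \pmod 8$ guarantees that $n' = \frac{s(N-1)}{24}$ is odd (your exponent $12\,n'\ell$ is exactly twice the paper's exponent of $i$, as it must be, since passing from a phase to the conjugate-to-value ratio squares it). What each approach buys: the paper's evaluation is shorter and self-contained, needing nothing beyond the shape of the $q$-expansion; your route costs an appeal to \eqref{eq:epsilon}, but in exchange the reality of the $f_1$-factor never has to be checked separately (it is absorbed into the conjugation identity), and you make fully explicit a step the paper leaves implicit --- that $N \not\equiv 1 \pmod 8$ forces $t$ even, hence $s = t$, without which the claim ``$\frac{s(N-1)}{24}$ is odd'' would be false in the $s = 2t$ case.
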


\begin{proof}
Write $\w_N = f_0 f_1$, where $f_0 = q^{- \frac {N-1}{24 N}}$ and
$f_1$ is a power series in $q^{1/N}$.
Notice that if $N \mid B$, then
$q^{1/N} (\alpha) = e^{2 \pi i \alpha / N} \in \R$.
So $\w_N^e (\alpha)$ is real up to the factor $f_0 (\alpha)^e$,
which itself is real up to the factor
$e^{\frac {2 \pi i}{4} \cdot \frac {s (N-1)}{24}
\cdot \frac {2 e B}{s N}}$.
This is an odd power of $i$ under the hypotheses of the lemma;
$N \not\equiv 1 \pmod 8$ is needed to ensure that
$\frac {s (N - 1)}{24}$ is odd.
\end{proof}

\begin {lemma}
Let $f$ be a modular function and $\alpha \in \OO$ such that $f (\alpha)$ is
a class invariant and a real number. Then
$H_D [f] \in \Q [X]$.
\end {lemma}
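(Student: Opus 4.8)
The plan is to exploit the fact that a ring class field is Galois over $\Q$, not merely over $\K$, together with the hypothesis that $f(\alpha)$ is real. By Theorem~\ref{th:N-system}, $H_D[f] = \prod_{i=1}^{h(D)} \bigl(X - f(\alpha_i)\bigr)$ is the characteristic polynomial of $f(\alpha) = f(\alpha_1)$ over $\K$, so a priori its coefficients lie in $\K$. Since $\K$ is imaginary quadratic, $\K \cap \R = \Q$; hence it suffices to prove that every coefficient is real, that is, that $H_D[f]$ is fixed coefficientwise by complex conjugation.

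First I would record the two standard facts about $\K_D$. It is classical that $\K_D/\Q$ is Galois, with $\operatorname{Gal}(\K_D/\K)$ a normal (index-two) subgroup of $\operatorname{Gal}(\K_D/\Q)$, and that complex conjugation $\tau$ restricts on $\K = \Q(\sqrt{\Delta})$ to the nontrivial automorphism $\sqrt{\Delta} \mapsto -\sqrt{\Delta}$. Because $\K_D$ is Galois over $\Q$ and lies in $\C$, the automorphism $\tau$ maps $\K_D$ onto itself, so $\tau|_{\K_D} \in \operatorname{Gal}(\K_D/\Q)$.

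The heart of the argument is to show that $\tau$ permutes the multiset $\{f(\alpha_i)\}$. Writing each conjugate as $f(\alpha_i) = \sigma_i\bigl(f(\alpha)\bigr)$ with $\sigma_i$ running through $\operatorname{Gal}(\K_D/\K)$, I would compute $\tau\bigl(f(\alpha_i)\bigr) = \tau\sigma_i\bigl(f(\alpha)\bigr) = (\tau\sigma_i\tau^{-1})\bigl(\tau f(\alpha)\bigr) = (\tau\sigma_i\tau^{-1})\bigl(f(\alpha)\bigr)$, where the last equality uses that $f(\alpha) \in \R$ is fixed by $\tau$. Since $\operatorname{Gal}(\K_D/\K)$ is normal in $\operatorname{Gal}(\K_D/\Q)$, conjugation by $\tau$ is a bijection of $\operatorname{Gal}(\K_D/\K)$ onto itself, so $\{\tau\sigma_i\tau^{-1}\}$ is again a complete list of the elements of $\operatorname{Gal}(\K_D/\K)$. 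Hence $\{\tau(f(\alpha_i))\} = \{\sigma(f(\alpha)) : \sigma \in \operatorname{Gal}(\K_D/\K)\} = \{f(\alpha_i)\}$ as multisets, which gives $\tau(H_D[f]) = H_D[f]$ and shows the coefficients are real. Combined with the first paragraph, they lie in $\Q$.

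The only delicate point is the interplay between complex conjugation as an automorphism of $\C$ and the dihedral structure of $\operatorname{Gal}(\K_D/\Q)$; once the normality of $\operatorname{Gal}(\K_D/\K)$ and the reality of $f(\alpha)$ are in hand, the bijection argument is routine and I expect no computational obstacle. In particular, this lemma merely abstracts the explicit pairing $f(\alpha_j) = \overline{f(\alpha_i)}$ for inverse forms that appeared in the proof of Theorem~\ref{th:reality}.
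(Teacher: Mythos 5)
Your proof is correct, but it takes a genuinely different route from the paper's. The paper argues directly on the polynomial: since $f(\alpha)$ is real, it is a root of $\overline{H_D[f]}$ (coefficientwise complex conjugate), which is again monic of the same degree with coefficients in $\overline{\K} = \K$; because $H_D[f]$ is the minimal polynomial of $f(\alpha)$ over $\K$, divisibility forces $\overline{H_D[f]} = H_D[f]$, and the coefficients lie in $\K \cap \R = \Q$. That argument needs nothing beyond the minimality of $H_D[f]$ and the stability of $\K$ under complex conjugation. You instead invoke the structure of $\K_D$ over $\Q$: that $\K_D/\Q$ is Galois, that complex conjugation $\tau$ restricts to an automorphism of $\K_D$, and that $\operatorname{Gal}(\K_D/\K)$ is normal in $\operatorname{Gal}(\K_D/\Q)$, so that conjugation by $\tau$ permutes the Galois orbit $\{f(\alpha_i)\}$. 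This is a clean argument, and it has the merit of exposing the structural (dihedral) reason for the reality, connecting nicely to the explicit inverse-form pairing in Theorem~\ref{th:reality}; note that the normality of $\operatorname{Gal}(\K_D/\K)$ is automatic from $[\K : \Q] = 2$, so the only genuinely extra input is that $\K_D/\Q$ is Galois. That fact is classical (ring class fields are generalized dihedral over $\Q$), but it is not proved or cited in the paper, whereas the paper's divisibility argument avoids it entirely; in that sense the paper's proof is the more self-contained and elementary of the two.
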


\begin{proof}
This is a trivial application of Galois theory.
The complex conjugate $\overline {f (\alpha)}$ is a root of
$\overline {H_D[f]}$. Since $\overline {f (\alpha)} = f (\alpha)$,
this implies that $\overline {H_D[f]}$ is a multiple of the minimal
polynomial $H_D [f]$ of $f (\alpha)$, so both are the same, and
$H_D [f]$ has coefficients in $K \cap \R = \Q$.
\end {proof}

Combining the lemmata yields the following result.

\begin {theorem}
Under the general assumptions of \S\ref {sct:specialization},
the characteristic polynomial of $\sqrt {D} \, \w_N^e (\alpha)$ is real
whenever $N \not\equiv 1 \pmod 8$, $N \mid D$, $\frac {s}{e}$ is even,
$\frac {s}{2e} N \mid B$ and $\frac {s}{e} N \nmid B$.
\end {theorem}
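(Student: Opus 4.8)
The plan is simply to chain the two preceding lemmata, since all the real work has already been done there. First I would apply Lemma~\ref{lem:v} to the point $\alpha = \frac{-B + \sqrt{D}}{2}$ (that is, to the form $[1, B, C]$, which is the choice $A = 1$ permitted in the general procedure): the hypotheses $N \not\equiv 1 \pmod 8$, $\frac{s}{e}$ even, $\frac{s}{2e} N \mid B$ and $\frac{s}{e} N \nmid B$ are exactly what that lemma requires, so it yields $\w_N^e(\alpha) \in i\R$; in other words, the singular value is purely imaginary.

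Next I would use that $D < 0$, so $\sqrt{D} = i\sqrt{|D|}$ is itself purely imaginary; hence the product $\sqrt{D}\,\w_N^e(\alpha)$ of two purely imaginary numbers is a real number. Independently, the general assumptions of \S\ref{sct:specialization} supply that $\w_N^e(\alpha)$ is a class invariant, and since $\sqrt{D} \in \OO$ the remark preceding the second lemma upgrades this to the statement that $\sqrt{D}\,\w_N^e(\alpha)$ is a class invariant as well. Note here that $\frac{s}{2e} N \mid B$ forces $N \mid B$, which together with $N \mid C$ gives the stated hypothesis $N \mid D$, confirming the internal consistency of the assumptions.

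Finally I would feed $f = \sqrt{D}\,\w_N^e$ into the second lemma: having exhibited $\sqrt{D}\,\w_N^e(\alpha)$ as a class invariant that is moreover real, the lemma concludes $H_D[\sqrt{D}\,\w_N^e] \in \Q[X]$, and in particular that it is real. The argument has essentially no structural obstacle; the only point demanding genuine attention is the sign-and-parity bookkeeping \emph{inside} Lemma~\ref{lem:v}, where $N \not\equiv 1 \pmod 8$ is precisely what makes $\frac{s(N-1)}{24}$ odd and thereby forces an odd power of $i$, so that the purely-imaginary conclusion really hinges on the hypotheses rather than being automatic.
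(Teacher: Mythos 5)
Your proposal is correct and follows exactly the paper's own route: the paper proves this theorem with the single line ``Combining the lemmata yields the following result,'' and your chaining of Lemma~\ref{lem:v} (applied to the form $[1,B,C]$) with the Galois-theoretic lemma, plus the observation that $\sqrt{D}$ is purely imaginary so the product of the two imaginary quantities is real, is precisely that combination spelled out. The consistency check that $\frac{s}{2e}N \mid B$ and $N \mid C$ recover $N \mid D$ is a nice touch but not needed beyond what the paper assumes.
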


For instance, we may apply this theorem to the cases $N\in \{2, 3,
4, 7\}$, in which Propositions~\ref{prop:Dodd}
or~\ref{prop:Neventheta2} hold:
$$\begin{array}{|c|c|c|c|}\hline
N & D & B & e\\ \hline
2 & 12\bmod 16& \pm 2 & 12 \\ %% B^2 = D + 8 mod 16
  & 24 \bmod 96 & \pm 12 & 6 \\ %% B^2 = D + 24 mod 96
\hline
3 & 9\bmod 12 & \pm 3 & 6 \\
\hline
7 & 21\bmod 28 & \pm 7 & 2 \\
\hline
4 & 0\bmod 32 & \pm 4 & 4 \\ %% B^2 = D + 16 mod 32
\hline
\end{array}$$
As numerical examples, we find:
$$H_{-72}[\sqrt{-72}\, \w_2^6] = X^2 + 720 X + 576,$$
$$H_{-51}[\w_3^6](X) = X^2 + 6 \sqrt{-51} X - 27,$$
$$H_{-51}[\sqrt{-51} \, \w_3^6](X) = X^2 -306 X + 1377.$$

%%%%% S
\section{Heights and comparison with other invariants}
\label{sct:heights}

Let $f$ be a modular function yielding class invariants and $\Phi[f](F, J)$ the
associated modular polynomial such that $\Phi[f] (f, j) = 0$. It is shown in
\cite{EnMo02} that asymptotically for $|D| \to \infty$, the height of the
class invariant $f(\alpha)$ is $c (f)$ times the height of $j (\alpha)$,
where
\begin {equation}
\label {eq:c}
c(f) = \frac{\mathrm{deg}_J(\Phi[f])}{\mathrm{deg}_F(\Phi[f])}
\end {equation}
depends only on $f$. It is then clear that $c (f^r) = r c(f)$ for rational $r$.
So to obtain $c (\w_N^e)$, it is sufficient to determine the degrees of the
modular polynomials of the full power $\w_N^s$, where $s$ is as defined in
Theorem~\ref {th:full}.

%%%%%%%%%% SS
\subsection{Modular polynomials for \texorpdfstring {$\w_N^s$}{wNs}}

Since $\w_N^s$ is modular for $\Gamma^0 (N)$ by Theorem~\ref {th:F24N},
we have
\[
\PhiNc := \Phi [\w_N^s]
= \prod_{M \in \Gamma^0(N) \backslash \Gamma} (F - \w_N^s \circ M).
\]
So $\deg_F \PhiNc = \psi (N)
= N \prod_{p \text { prime, } p \mid N} \left( 1 + \frac {1}{p} \right)$.
The degree in $J$ is obtained by examining the $q$-developments
of the conjugates $\w_N^s \circ M$ of $\w_N^s$.

\begin{proposition}[Oesterl\'e]
The cosets of $\Gamma^0(N) \backslash \Gamma$ can be split into the
following three families:
$$T^{\nu} = \left(\begin{array}{cc} 1 & \nu \\ 0 & 1 \\ \end{array}\right), 0
\leq \nu < N,$$
$$S = \left(\begin{array}{cc} 0 & -1 \\ 1 & 0 \\ \end{array}\right),$$
$$M_{k, k'} = \left(\begin{array}{cc} k & k k'-1 \\1 & k'\\\end{array}\right)$$
with $1 < k < N$, $\gcd(k, N) > 1$ and $0 \leq k' < \mu(k)$ where
$\mu(k)$ is the smallest integer for which $\gcd(\mu(k) k -1, N)=1$.
\end{proposition}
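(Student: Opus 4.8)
The plan is to recognise the listed matrices as a transversal of the right cosets $\Gamma^0(N) \backslash \Gamma$ through the classical identification of these cosets with the projective line. First I would record that for $g = \begin{pmatrix} a & b \\ c & d\end{pmatrix} \in \Gamma$ the right coset $\Gamma^0(N) g$ depends only on the top row $(a,b)$ modulo $N$ and only up to multiplication by a unit: indeed $\Gamma^0(N) g = \Gamma^0(N) g'$ iff $g' g^{-1} \in \Gamma^0(N)$, whose upper right entry is $a b' - a' b$, so the condition is $a b' \equiv a' b \pmod N$, i.e. $[a:b] = [a':b']$ in $\mathbb{P}^1(\Z/N\Z)$. (Equivalently, conjugation by $S$ turns $\Gamma^0(N)$ into $\Gamma_0(N)$ and reduces everything to the standard parametrisation of $\Gamma_0(N) \backslash \Gamma$ by bottom rows.) Since $|\mathbb{P}^1(\Z/N\Z)| = \psi(N) = \deg_F \PhiNc$ equals the number of cosets, it suffices to check that the top rows of the listed matrices represent each point of $\mathbb{P}^1(\Z/N\Z)$ exactly once.

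Next I would compute these top rows: $T^\nu$ gives $(1,\nu)$, $S$ gives $(0,-1)$, and $M_{k,k'}$ gives $(k, k k'-1)$, all three lying in $\Gamma$ (the determinant of $M_{k,k'}$ is $k k' - (k k'-1) = 1$). The cleanest bookkeeping uses the unit-invariant $\delta = \gcd(a,N)$, which is constant on a coset. The family $T^\nu$ realises exactly the $N$ points with $\delta = 1$ (first coordinate a unit, normalised to $1$), pairwise distinct since $[1:\nu] = [1:\nu']$ forces $\nu = \nu'$; the single matrix $S$ realises the unique point with $\delta = N$, namely $[0:1]$. Thus families (1) and (2) account, without overlap, for all points with $\delta \in \{1, N\}$, and family (3), whose points all satisfy $1 < \delta < N$ because $1 < k < N$ and $\gcd(k,N) > 1$, is automatically disjoint from them.

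It then remains to prove that $(k,k') \mapsto [k : k k'-1]$ is a bijection from the index set of family (3) onto the set of points with $1 < \gcd(a,N) < N$; this is the heart of the matter and the step I expect to be hardest. Writing $M_{k,k'} = T^k S T^{k'}$ shows that for fixed $k$ varying $k'$ shifts the coset by $T^{k'}$, and a direct computation of $T^k S T^{m} S^{-1} T^{-k}$ gives $\Gamma^0(N) M_{k,k_1'} = \Gamma^0(N) M_{k,k_2'}$ exactly when $k^2 (k_1' - k_2') \equiv 0 \pmod N$. The genuine difficulty is that distinct admissible first coordinates $k$ that are unit multiples of one another modulo $N$ can represent the same projective point, so family (3) is \emph{not} simply one full period of $k'$ per $k$; the bound $0 \le k' < \mu(k)$ is precisely what trims these overlaps. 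Concretely, $k' = 0$ yields the unit second coordinate $-1$, and $\mu(k)$ is the first later index at which $k k' - 1$ is again coprime to $N$, so the chosen range isolates one representative per orbit.

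To finish I would establish the bijection by proving injectivity and then matching cardinalities. Injectivity reduces, via the top-row criterion $u k_1 \equiv k_2$, $u(k_1 k_1' - 1) \equiv k_2 k_2' - 1 \pmod N$ for a unit $u$, to a short elimination of $u$ that forces $(k_1, k_1') = (k_2, k_2')$ once one invokes the constraints $0 \le k_i' < \mu(k_i)$ together with $\gcd(k_i k_i' - 1, k_i) = 1$. Surjectivity then follows from the counting identity $\sum_{1 < k < N,\ \gcd(k,N) > 1} \mu(k) = \psi(N) - N - 1$, which I would obtain by a sieve / Chinese-remainder argument: for each such $k$ the condition $\gcd(mk - 1, N) = 1$ only constrains $m$ modulo the primes $p \mid N$ with $p \nmid k$ (primes dividing $k$ give $mk - 1 \equiv -1$ automatically), so $\mu(k)$ is the least positive integer avoiding the forbidden residues $k^{-1} \bmod p$, and weighing these counts against the fibres of $[a:b] \mapsto \gcd(a,N)$ over $\mathbb{P}^1(\Z/N\Z)$ yields the identity, most transparently after reduction to prime powers. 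This last combinatorial identity, rather than any single matrix computation, is where I expect the real work to lie.
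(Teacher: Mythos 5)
The paper gives you nothing to compare against here: the proposition is credited to Oesterl\'e and stated without proof, so your attempt has to stand on its own. Its architecture is sound. The identification of $\Gamma^0(N)\backslash\Gamma$ with $\mathbb{P}^1(\Z/N\Z)$ via top rows modulo units, the invariant $\delta=\gcd(a,N)$, and the treatment of the families $T^\nu$ ($\delta=1$) and $S$ ($\delta=N$) are complete and correct. Moreover, your deferred injectivity step does go through essentially as you indicate: coset equality is exactly the vanishing modulo $N$ of the upper-right entry of $M_{k_1,k_1'}M_{k_2,k_2'}^{-1}$, which is
\[
k_1-k_2+k_1k_2(k_1'-k_2') \equiv 0 \pmod N,
\]
and if $d=k_1'-k_2'>0$, then $0<d<\mu(k_1)$, so by the \emph{defining minimality} of $\mu(k_1)$ there is a prime $p\mid\gcd(dk_1-1,N)$; reducing the displayed relation modulo $p$ and using $dk_1\equiv 1\pmod p$ forces $p\mid k_1$, contradicting $p\mid dk_1-1$. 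Hence $d=0$, and then $k_1=k_2$ since both lie in $(0,N)$. (Note the auxiliary fact you invoke, $\gcd(k_ik_i'-1,k_i)=1$, is not the relevant one; what is needed is that $\gcd(dk_i-1,N)>1$ for all $0<d<\mu(k_i)$.)

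The genuine gap is the surjectivity half. Your counting identity $\sum\mu(k)=\psi(N)-N-1$ is true, but the sketch stops exactly where the work starts, and the hint ``most transparently after reduction to prime powers'' points in the wrong direction. After your (correct) observations -- $\mu(k)$ is the least positive $m$ avoiding $m\equiv k^{-1}\pmod p$ for the primes $p\mid N$, $p\nmid k$, and for fixed $\delta=\gcd(k,N)$ these tuples of forbidden residues equidistribute over $\prod_{p\mid N,\,p\nmid\delta}(\Z/p\Z)^*$ as $k$ varies -- what remains is the identity
\[
\sum_{(f_p)} L\big((f_p)\big)=\prod_p p,
\qquad
L\big((f_p)\big)=\min\{m\geq 1 : m\not\equiv f_p \pmod p \text{ for all } p\},
\]
the sum being over all tuples of nonzero residues. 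This does \emph{not} reduce to one prime at a time: $L$ is a simultaneous-avoidance minimum and does not factor over the primes (for $p\in\{2,3\}$ one has $L(1,2)=4$, while the one-prime minima are $2$ and $1$). The identity is provable, e.g.\ via the bijection $\big((f_p),m\big)\mapsto\big(f_p-m+1 \bmod p\big)_p$ from $\{((f_p),m): 1\leq m\leq L((f_p))\}$ onto $\prod_p\Z/p\Z$, but that idea is absent from your proposal, and it is the crux. Alternatively you can avoid counting altogether by a descent that uses $\mu$ exactly as injectivity does: every coset with $1<\delta<N$ has some representative with top row $(k,kk'-1)$, $k'\geq 0$ (choose a unit $u\equiv -b^{-1}\pmod\delta$ and solve for $k'$); and if $k'\geq\mu(k)$, then $1-\mu(k)k$ is invertible modulo $N$, so the pair $\big(k_1,\,k'-\mu(k)\big)$ with $k_1\equiv k(1-\mu(k)k)^{-1}\pmod N$ represents the same coset with strictly smaller $k'$. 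Descent terminates below $\mu$, giving surjectivity directly and your counting identity as a corollary rather than as an input.
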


Using \eqref {eq:w_N|M}, we find
\begin{proposition}
$$(\w_N^s \circ T) (z) = \w_N(z+\nu)^s, 0 \leq \nu < N,$$
$$(\w_N^s \circ S) (z) = \left( \sqrt{N} \, \frac {\eta(N z)}{\eta(z)} \right)^s,$$
$$(\w_N^s \circ M_{k, k'}) (z) = \left( \zeta_{k, k'} \sqrt {\delta_k} \,
\frac{\eta\left( \frac {\delta_k z +
c_{k, k'}}{N/\delta_k} \right) }{\eta(z)}\right)^s,$$
where $\delta_k = \gcd(k, N)$, $\zeta_{k, k'}$ is a 24-th root
of unity and $c_{k, k'}$ is a rational integer.
\end{proposition}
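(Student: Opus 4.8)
The plan is to evaluate the transformation formula \eqref{eq:w_N|M} from the proof of Theorem~\ref{th:F24N} on each of the three families of coset representatives furnished by Oesterl\'e's proposition, and then to raise each result to the power~$s$. Two of the three families are immediate. For $T^\nu$ the action is the translation $z \mapsto z + \nu$, so that $(\w_N^s \circ T^\nu)(z) = \w_N(z+\nu)^s$ directly from the definition of the action. For $S$ the identity $\w_N \circ S = \sqrt N\,\eta(Nz)/\eta(z)$ was already established in the proof of Theorem~\ref{th:F24N} (there $\delta = N$, $u = 0$, $v = 1$), and raising to the $s$-th power yields the claimed expression.

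The only genuine computation concerns $M_{k,k'} = \begin{pmatrix} k & kk'-1 \\ 1 & k' \end{pmatrix}$. I would substitute $a = k$, $b = kk'-1$, $c = 1$, $d = k'$ into \eqref{eq:w_N|M}. First, $\delta = \gcd(a, N) = \gcd(k, N) = \delta_k$, and I would fix integers $u, v$ with $\delta_k = uk + vN$. The numerator argument then simplifies via
\[
ub + vNd = u(kk'-1) + vNk' = k'(uk + vN) - u = k'\delta_k - u,
\]
which is a rational integer; calling it $c_{k,k'}$ reproduces the factor $\eta\bigl((\delta_k z + c_{k,k'})/(N/\delta_k)\bigr)/\eta(z)$ together with the scalar $\sqrt{\delta_k}$. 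It remains to see that the prefactor $\varepsilon\begin{pmatrix} k/\delta_k & -v \\ N/\delta_k & u \end{pmatrix} \varepsilon(M_{k,k'})^{-1}$ collapses to a single $24$-th root of unity $\zeta_{k,k'}$. Both matrices lie in $\Sl_2(\Z)$ (their determinants are $(ku + vN)/\delta_k = 1$ and $kk' - (kk'-1) = 1$), and each is already in the normalised form of Theorem~\ref{th:transformation}, since its lower-left entry, $N/\delta_k > 0$ respectively $1 > 0$, is positive. By the shape of $\varepsilon$ in that theorem — a Legendre symbol, equal to $\zeta_{24}^{0}$ or $\zeta_{24}^{12}$, times an explicit power of $\zeta_{24}$ — each $\varepsilon$ is a $24$-th root of unity; the product of one such root of unity with the reciprocal of another is therefore again a $24$-th root of unity, and raising the whole expression to the power $s$ gives the proposition.

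There is no deep obstacle here: the statement is a direct specialisation of \eqref{eq:w_N|M} to the three representatives. The only steps demanding care are the arithmetic simplification $ub + vNd = k'\delta_k - u$ and the observation that the product of $\varepsilon$-factors is a root of unity. Since the proposition asserts only that $\zeta_{k,k'}$ is \emph{some} $24$-th root of unity and that $c_{k,k'}$ is \emph{some} rational integer, I need not evaluate either explicitly; indeed their precise values are irrelevant to the degree computation of $\PhiNc$ that this proposition feeds into, where only the shape of the $q$-expansions of the conjugates matters.
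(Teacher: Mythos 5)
Your proposal is correct and follows the same route as the paper, which itself gives no detailed proof beyond the remark ``Using \eqref{eq:w_N|M}, we find'': one specialises the transformation formula \eqref{eq:w_N|M} to each family of coset representatives, with $T^\nu$ and $S$ immediate and $M_{k,k'}$ handled by the substitution $a=k$, $b=kk'-1$, $c=1$, $d=k'$, $\delta=\delta_k$. Your explicit simplification $ub+vNd=k'\delta_k-u$ and the observation that the quotient of the two $\varepsilon$-factors is a $24$-th root of unity are exactly the details the paper leaves to the reader.
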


The proposition shows in particular that all conjugates of $\w_N^s$ have integral
and that $\w_N^s$ and $\w_N^s \circ S$ have rational $q$-expansions.
The $q$-expansion principle now implies that $\PhiNc \in \Z [F, J]$,
cf.~\cite [\S 3]{Deuring58}

\begin{theorem}
$$\deg_J \Phi_N^c = \frac{s}{24} (N-1 + S(N))$$
where
\begin {equation}
\label {eq:S}
S(N) = \sum_{k : 1 < k < N, 1 < \delta_k=\gcd(k, N) < \sqrt{N}}
\mu(k) \, \left( 1 - \frac {\delta_k^2}{N} \right).
\end {equation}
\end{theorem}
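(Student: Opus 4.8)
The plan is to read $\deg_J \PhiNc$ as the degree of $\w_N^s$ viewed as a map $X_0(N)\to\mathbb P^1$, and to evaluate this degree as the total order of the poles of $\w_N^s$. Since $\eta$ does not vanish on $\HH$, the function $\w_N^s$ is holomorphic and nowhere zero on $\HH$, so all of its zeros and poles are concentrated at the cusps, and those lying over the cusp $\infty$ of $X(1)$ are visible in the $q$-expansions of the conjugates $\w_N^s\circ M$ furnished by the previous proposition. First I would make precise that $\deg_J\PhiNc$ equals the sum of the pole orders, measured in the parameter $q=e^{2\pi i z}$, of the $\w_N^s\circ M$ as $M$ ranges over coset representatives of $\Gamma^0(N)\backslash\Gamma$. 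Writing
\[
\PhiNc=\prod_{M}\bigl(F-\w_N^s\circ M\bigr)=\sum_{r}(-1)^r e_r\,F^{\psi(N)-r},
\]
each coefficient $e_r$ is a $\Gamma$-invariant function holomorphic on $\HH$, hence a polynomial in $j$ whose $J$-degree equals its pole order in $q$, because $j\sim q^{-1}$ at the cusp. The pole order of the elementary symmetric function $e_r$ is the sum of the $r$ largest pole orders occurring among the conjugates; taking $r$ equal to the number of conjugates that actually have a pole singles out a unique dominant monomial, so no cancellation of leading terms can occur, and $\deg_J\PhiNc$ is exactly the total pole order of $\w_N^s$ at the cusps.

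It then remains to extract the leading $q$-power of each of the three families, using $\eta(w)\sim q_w^{1/24}$ with $q_w=e^{2\pi i w}$. For the translates $T^\nu$ one has $\w_N\sim q^{-(N-1)/(24N)}$, coming from $\eta(z/N)/\eta(z)$, so each of the $N$ representatives contributes a pole of order $\tfrac{s(N-1)}{24N}$, for a total of $\tfrac{s}{24}(N-1)$. For $S$ the conjugate $\bigl(\sqrt N\,\eta(Nz)/\eta(z)\bigr)^s\sim q^{s(N-1)/24}$ has a zero, hence contributes nothing to the polar degree. For $M_{k,k'}$ the argument of the numerator $\eta$ yields $\eta\bigl(\tfrac{\delta_k z+c_{k,k'}}{N/\delta_k}\bigr)\sim q^{\delta_k^2/(24N)}$ up to a root of unity, so the conjugate has $q$-order $\tfrac{s}{24}\bigl(\tfrac{\delta_k^2}{N}-1\bigr)$; this is a genuine pole, of order $\tfrac{s}{24}\bigl(1-\tfrac{\delta_k^2}{N}\bigr)$, exactly when $\delta_k<\sqrt N$, and it occurs with multiplicity $\mu(k)$. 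Summing the polar contributions of the three families gives $\tfrac{s}{24}(N-1)+\tfrac{s}{24}S(N)$, which is the asserted value.

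The main obstacle is the first step: turning the heuristic ``$\deg_J=$ total pole order'' into a rigorous statement and ruling out cancellation in the leading terms of the $e_r$. I would handle this by arguing on the single coefficient $e_{r_0}$ with $r_0$ the number of polar conjugates, whose top $q$-term is the product of the nonzero leading coefficients of those conjugates and is therefore not cancelled by any other subset of the same size. A secondary point to treat carefully is the boundary case $\delta_k=\sqrt N$, where the conjugate has $q$-order $0$ and is correctly excluded from $S(N)$ by the strict inequality $\delta_k<\sqrt N$; the complementary range $\delta_k>\sqrt N$ produces zeros and is likewise, consistently, absent from the polar count. As a sanity check, for $N=2$ (where $s=24$ and the $M_{k,k'}$ family is empty) the formula gives $\deg_J\PhiNc=\tfrac{24}{24}(1+0)=1$, matching the explicit $\Phi_2^c$ recorded in Section~\ref{sct:wN}.
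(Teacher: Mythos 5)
Your proposal is correct and follows essentially the same route as the paper: both identify $\deg_J\PhiNc$ with the total polar $q$-order of the conjugates by isolating the coefficient (your $e_{r_0}$) obtained from multiplying exactly the conjugates of strictly negative order, and both then compute the orders of the three coset families identically (translates contributing $\frac{s(N-1)}{24}$, the $S$-conjugate contributing nothing, and the $M_{k,k'}$ contributing $\frac{s}{24}\bigl(1-\frac{\delta_k^2}{N}\bigr)$ with multiplicity $\mu(k)$ when $\delta_k<\sqrt N$). The only difference is that you spell out the no-cancellation argument explicitly, whereas the paper delegates it to the analogous reasoning cited from \cite{EnSc05}.
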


\begin{proof}
Consider $\PhiNc$ as a polynomial in $F$ with coefficients in $\Z [J]$.
Following the same reasoning as in \cite{EnSc05}, we see that the
coefficient of highest degree in $J$ is obtained when all conjugates are
multiplied together whose $q$-expansions have strictly negative order;
since the $q$-expansion of $j$ starts with $q^{-1}$, the degree in $J$ is
then the opposite of this order.
The $\w_N(z+\nu)^s$ have negative order $- \frac {s (N-1) }{24 N}$
and contribute a total of $- \frac {s (N-1)}{24}$.
The function $\w_N^s \circ S$ has positive order.
The conjugates coming from $M_{k, k'}$ have order
$\frac {s}{24} \left( \frac {\delta_k^2}{N} - 1 \right)$, which is
negative whenever $\delta_k < \sqrt N$.
\end{proof}

Let us note a list of useful corollaries.
\begin{proposition}
When $N = \ell^n$ for a prime $\ell$ and $n \geq 1$, then
$$S(N)= \left\{\begin{array}{ll}
(\ell^m-1)^2 & \text{ if } n = 2m+1,\\
(\ell^m-1)(\ell^{m+1}-1) & \text{ if } n = 2m+2.
\end{array}\right.$$
\end{proposition}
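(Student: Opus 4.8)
The plan is to evaluate the general formula
$$
S(N) = \sum_{1 < \delta_k = \gcd(k,N) < \sqrt N} \mu(k) \left( 1 - \frac {\delta_k^2}{N} \right)
$$
in the prime-power case $N = \ell^n$. The key simplification is that for $N = \ell^n$, the only possible values of $\delta_k = \gcd(k, N)$ are the divisors $\ell^d$ for $1 \le d \le n$, and the constraint $1 < \delta_k < \sqrt N = \ell^{n/2}$ restricts $d$ to the range $1 \le d < n/2$ (with strict inequality, so when $n$ is even we must check whether $d = n/2$ contributes). First I would reorganise the sum by grouping the indices $k$ according to the value $d = v_\ell(\gcd(k,N)) = \min(v_\ell(k), n)$, i.e.\ according to $\delta_k = \ell^d$.

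Next I would compute, for each fixed $d$ with $1 \le d < n/2$, the inner contribution $\sum_k \mu(k)$ over those $k$ in the range $1 < k < N$ with $\gcd(k, N) = \ell^d$, since the factor $(1 - \delta_k^2/N) = (1 - \ell^{2d - n})$ is constant on each such group. The quantity $\mu(k)$ is the number of admissible $k'$, namely the smallest integer with $\gcd(\mu(k) k - 1, N) = 1$; for $N = \ell^n$ and $\gcd(k,N) = \ell^d$ with $d \ge 1$, the integer $\ell^d k' - \text{(something)}$ analysis shows $\mu(k) = \ell^{\min(d, n-d)}$ or a similar closed form that I would pin down by a direct coprimality computation. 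Summing $\mu(k)$ over the relevant residues of $k$ modulo $N$ with $v_\ell(k) = d$ exactly, I expect each group to contribute a clean power of $\ell$, so that $S(N)$ becomes a finite geometric-type sum $\sum_{d} \ell^{(\text{exponent in } d)} (1 - \ell^{2d-n})$.

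The main obstacle, and the step requiring the most care, will be the correct bookkeeping at the boundary $d = n/2$ when $n$ is even, together with the exact value of $\mu(k)$ and the count of admissible $k$ in each group. When $n = 2m+2$ is even, the divisor $\ell^{m+1} = \sqrt{\ell^{n}} \cdot \ell^{?}$ sits right at or beyond the threshold $\sqrt N$, and one must verify that the term with $\delta_k = \ell^{n/2}$ contributes nothing because $1 - \delta_k^2/N = 0$ there, so it can be harmlessly included or excluded; this is what produces the two different closed forms $(\ell^m - 1)^2$ versus $(\ell^m - 1)(\ell^{m+1} - 1)$. I would finish by collapsing the resulting geometric sum and factoring: for $n = 2m+1$ the telescoping yields a perfect square $(\ell^m - 1)^2$, while for $n = 2m+2$ the extra middle range shifts one factor to give $(\ell^m - 1)(\ell^{m+1} - 1)$. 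A consistency check on small cases such as $N = \ell$ (where $S(N) = 0$), $N = \ell^2$, and $N = \ell^3$ would confirm the formula before writing out the full induction or direct summation.
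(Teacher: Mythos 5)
Your overall strategy---grouping the $k$ in the defining sum for $S(N)$ by $\delta_k = \ell^d$, evaluating $\mu(k)$, counting the $k$ in each group, and collapsing the resulting geometric sums---is exactly the paper's proof. But the one arithmetic fact the whole computation hinges on is left open in your plan, and your provisional answer for it is wrong. For $N = \ell^n$, every $k$ occurring in the sum is divisible by $\ell$ (since $\gcd(k,N) = \ell^d$ with $d \ge 1$), so $k - 1 \equiv -1 \pmod \ell$ and hence $\gcd(k-1, \ell^n) = 1$; by the definition of $\mu$, this gives $\mu(k) = 1$ for \emph{every} such $k$, with no dependence on $d$ whatsoever. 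Your guess $\mu(k) = \ell^{\min(d,\,n-d)}$ would derail the computation: the group with $\gcd(k,N) = \ell^d$ has $(\ell-1)\ell^{n-d-1}$ elements, so with your $\mu$ it would contribute $(\ell-1)\ell^{n-1}\left(1 - \ell^{2d-n}\right)$, making the total of order $m(\ell-1)\ell^{n-1}$, which is far larger than the claimed value. With the correct $\mu(k)=1$, each group contributes just its cardinality times the weight, and
\[
S(N) = \sum_{d=1}^{m} (\ell-1)\,\ell^{n-d-1}\left(1 - \ell^{2d-n}\right)
= \ell^{n-m-1}(\ell^m - 1) - (\ell^m - 1)
= \left(\ell^{n-m-1}-1\right)\left(\ell^m-1\right),
\]
which specialises to the two stated forms. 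So, as written, the plan fails at its central step; the repair is a one-line coprimality observation, after which the rest of your outline goes through.

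A secondary slip: the dichotomy between $(\ell^m-1)^2$ and $(\ell^m-1)(\ell^{m+1}-1)$ is not produced by the boundary term $\delta_k = \ell^{n/2}$. In both parities the admissible range is $1 \le d \le m$ (the boundary divisor exists only for even $n$, is excluded by the strict inequality $\delta_k < \sqrt N$, and would contribute $0$ anyway, as you correctly note); the difference between the two closed forms comes solely from evaluating $\ell^{n-m-1}$, which equals $\ell^m$ when $n = 2m+1$ and $\ell^{m+1}$ when $n = 2m+2$.
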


\begin{proof}
The $k$ occurring in \eqref {eq:S} are the $(k_1 + \ell k_2) \ell^r$
with $1 \leqslant k_1 < \ell$, $1 \leqslant r \leqslant m$
and $0 \leqslant k_2 < \ell^{n-r-1}$ (so that $k < N$);
they yield $\delta_k = \ell^r$ and $\mu (k) = 1$. Hence,
\[
S (N)
= \sum_{r=1}^m (\ell - 1) \ell^{n-r-1} \left( 1 - \ell^{2 r - n} \right)
= \left( \ell^{n-m-1} - 1 \right) \left( \ell^m - 1 \right).
\]
\end{proof}

\begin{corollary}
When $N$ is prime or the square of a prime,
then $\deg_J \PhiNc = \frac {s(N-1)}{24}$.
\end{corollary}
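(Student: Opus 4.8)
The plan is to observe that this corollary is an immediate specialisation of the preceding Theorem together with either the preceding Proposition or the definition \eqref{eq:S} of $S(N)$. The Theorem already gives $\deg_J \PhiNc = \frac{s}{24}\left(N - 1 + S(N)\right)$, so the entire claim reduces to checking that $S(N) = 0$ when $N = \ell$ or $N = \ell^2$ for a prime $\ell$.

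First I would verify $S(N) = 0$ directly from \eqref{eq:S}, which is the cleanest route. The sum is indexed by those $k$ with $1 < \delta_k = \gcd(k, N) < \sqrt{N}$, so it suffices to show this index set is empty. When $N = \ell$ is prime, the only possible values of $\gcd(k, N)$ are $1$ and $\ell$; the constraint $1 < \delta_k$ forces $\delta_k = \ell = N$, which violates $\delta_k < \sqrt{N}$. When $N = \ell^2$, for $1 \leq k < N$ the gcd can only be $1$ or $\ell$ (the value $\ell^2$ is excluded since $k < \ell^2$), and there is no divisor strictly between $1$ and $\sqrt{N} = \ell$. In both cases the sum is empty, whence $S(N) = 0$.

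Substituting $S(N) = 0$ into the Theorem then yields $\deg_J \PhiNc = \frac{s(N-1)}{24}$, as claimed. Alternatively, one may simply quote the preceding Proposition: a prime corresponds to $n = 1$, i.e.\ $m = 0$, giving $S(N) = (\ell^0 - 1)^2 = 0$, while the square of a prime corresponds to $n = 2$, again $m = 0$, giving $S(N) = (\ell^0 - 1)(\ell - 1) = 0$. Either way the boundary value $m = 0$ makes the exceptional sum vanish.

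There is no genuine obstacle in this argument; it is a routine boundary-case specialisation, and the only point deserving a word of care is the observation that for $N = \ell^2$ the gcd $\ell^2$ cannot occur in the range $1 \leq k < N$, so that the divisor condition $1 < \delta_k < \ell$ is genuinely unsatisfiable.
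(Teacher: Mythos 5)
Your proof is correct and matches the paper's (implicit) argument: the corollary follows at once from the degree formula $\deg_J \PhiNc = \frac{s}{24}(N-1+S(N))$ once one knows $S(N)=0$, which is exactly the $m=0$ case of the preceding Proposition (equivalently, the index set in \eqref{eq:S} is empty for $N=\ell$ or $\ell^2$). Both of your routes to $S(N)=0$ are valid and amount to the same observation, so there is nothing to add.
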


\begin{proposition}
When $N = p_1 p_2$ for two primes $p_2 \geq p_1$, then $S (N) =
p_2- p_1$.
\end{proposition}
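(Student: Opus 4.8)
The plan is to make the sum \eqref{eq:S} completely explicit for $N=p_1p_2$ by first identifying which indices $k$ survive the constraints and then evaluating the multiplicity $\mu(k)$ for each. The only divisors of $N$ lying strictly between $1$ and $N$ are $p_1$ and $p_2$, so any contributing $k$ has $\delta_k=\gcd(k,N)\in\{p_1,p_2\}$. When $p_1<p_2$ we have $p_1<\sqrt{N}<p_2$, so the condition $\delta_k<\sqrt{N}$ retains exactly the $k$ with $\delta_k=p_1$ and discards those with $\delta_k=p_2$; when $p_1=p_2$ the condition $\delta_k<\sqrt N=p_1$ is never met, the sum is empty, and $S(N)=0=p_2-p_1$, so I may assume $p_1<p_2$ throughout.

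First I would describe the surviving indices. The $k$ with $1<k<N$ and $\gcd(k,N)=p_1$ are precisely $k=jp_1$ for $j=1,\dots,p_2-1$: indeed $p_1\mid k$ forces $k=jp_1$ with $1\le j\le p_2-1$, and then $p_2\nmid j$ holds automatically, so $\gcd(k,N)=p_1$. For every such $k$ the weight $1-\delta_k^2/N=1-p_1^2/(p_1p_2)=(p_2-p_1)/p_2$ is constant, so it remains only to compute $\sum_k\mu(k)$ and multiply by $(p_2-p_1)/p_2$.

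Next I would evaluate $\mu(k)$, taken as the smallest positive integer with $\gcd(\mu k-1,N)=1$ as in the Oesterl\'e proposition. The key observation is that $p_1\mid k$ gives $\mu k-1\equiv-1\pmod{p_1}$ for every $\mu$, so $\mu k-1$ is automatically coprime to $p_1$ and the defining condition reduces to $\mu k\not\equiv1\pmod{p_2}$. Since $k$ is a unit modulo $p_2$, the congruence $\mu k\equiv1\pmod{p_2}$ has the unique solution $\mu\equiv k^{-1}\pmod{p_2}$; hence $\mu(k)=1$ unless $k\equiv1\pmod{p_2}$, in which case $\mu=1$ fails while $\mu=2$ succeeds (using $p_2\ge3$), giving $\mu(k)=2$. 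Among $j=1,\dots,p_2-1$ exactly one value satisfies $jp_1\equiv1\pmod{p_2}$, namely $j\equiv p_1^{-1}$, so precisely one surviving $k$ has $\mu(k)=2$ and the remaining $p_2-2$ have $\mu(k)=1$.

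Finally I would assemble the pieces: $\sum_k\mu(k)=(p_2-2)\cdot1+2=p_2$, whence $S(N)=\tfrac{p_2-p_1}{p_2}\cdot p_2=p_2-p_1$. The one genuinely delicate point — and the main obstacle — is the computation of $\mu(k)$, in particular recognising the single exceptional index with $\mu(k)=2$. This exceptional contribution is exactly what raises the naive count $\sum_k1=p_2-1$ to $p_2$, and it must be tracked carefully (together with the degenerate cases $p_1=p_2$ and small $p_2$) in order to arrive at the clean value $p_2-p_1$.
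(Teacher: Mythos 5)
Your proof is correct and follows essentially the same route as the paper's: restrict to the multiples $k = jp_1$ with $1 \le j < p_2$, observe that exactly one of them (the one with $jp_1 \equiv 1 \pmod{p_2}$) has $\mu(k) = 2$ while the rest have $\mu(k) = 1$, and multiply the resulting count $p_2$ by the constant weight $1 - p_1^2/N$. The only cosmetic difference is that you dispose of the case $p_1 = p_2$ directly by noting the sum is empty, whereas the paper refers back to its prime-power proposition; your fuller justification of the values of $\mu(k)$ is a welcome elaboration of what the paper leaves implicit.
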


\begin{proof}
The case $p_1=p_2$ is already proven. So it remains to consider
$p_1 < \sqrt{N} < p_2$, and the integers $k$ contributing
to $S (N)$ are the $\tilde k p_1$ with $1\leq \tilde k <
p_2$. Among these, only one is such that $\gcd(k-1, N)\neq 1$,
namely the $k$ with $\tilde k \equiv 1/p_1\pmod {p_2$}; for this one,
$\mu(k) = 2$. Therefore
$$S (N) = \big( (p_2-2) \cdot 1 + 1 \cdot 2 \big)
\left( 1- \frac {p_1^2}{N} \right) = p_2 - p_1.
$$
\end{proof}

With some more effort, the constant coefficient $\PhiNc(0, J)$
could be obtained as the product of all conjugates, but it is not needed
in the following.

%%%%%%%%%% SS
\subsection{Heights}
\label {ssct:heights}

Knowing the degrees of the modular polynomials, we can compare class
invariants obtained from $\w_N^e$ among themselves and with others
using \eqref {eq:c}. Of special interest is the infinite family of
invariants obtained in \cite{EnSc04} from the double $\eta$-quotients
\[
\w_{p_1, p_2}^\sigma (z) = \left( \frac {\eta \left( \frac {z}{p_1} \right)
\eta \left( \frac {z}{p_2} \right)}{\eta \left( \frac {z}{p_1 p_2} \right)
\eta (z)} \right)^\sigma,
\]
where $p_1$, $p_2$ are (not necessarily distinct) primes and
$\sigma = \frac {24}{\gcd (24, (p_1 - 1)(p_2 - 1))}$.
These functions yield class invariants whenever
$\legendre {D}{p_1} = \legendre {D}{p_2} = 1$, and in some cases
when $\legendre {D}{p_1} = 0$ or $\legendre {D}{p_2} = 0$,
see \cite[Cor.~3.1]{EnSc04}. The degrees of their modular
polynomials have been worked out in \cite[Th.~9]{EnSc05}, and we
summarise the results in the following table, in which $\ell$
and $p_1 \neq p_2$ are supposed to be prime numbers.

{
\renewcommand {\arraystretch}{1.4}
$$\begin{array}{|c|c|c|}\hline
f & c(f) & \deg_J \PhiNc \\ \hline
\w_{\ell}^e      & \frac {e (\ell-1)}{24 (\ell+1)} & \frac{s
(\ell-1)}{24} \\
\w_{\ell^2}^e    & \frac {e (\ell-1)}{24 \ell} & \frac{\ell^2-1}{24}
\text{ if } \ell > 3\\
\w_{p_1 p_2}^e   & \frac {e (p_2-1)}{24 (p_2 + 1)} & \frac{s
(p_2-1)(p_1+1)}{24} \\
\w_N^e           & \frac {e (N-1+S(N))}{24 \psi (N)} & \frac{s
(N-1+S(N))}{24}\\
\hline
\w_{\ell, \ell}^e & \frac {e (\ell-1)^2}{12 \ell (\ell + 1)} &
\frac{\sigma (\ell-1)^2}{12} \\
\w_{p_1, p_2}^e   & \frac {e (p_1-1)(p_2-1)}{12 (p_1 + 1)(p_2 + 1)} &
\frac{\sigma (p_1-1) (p_2-1)}{12}\\
\hline
\end{array}$$
}

Notice that asymptotically for $\ell$ or $p_1$, $p_2 \to \infty$,
the factors $c (f)$ tend to $\frac {e/2}{12}$ for $\w_\ell^e$ (here,
$e$ is necessarily even), $\frac {e}{12}$ for the double $\eta$ quotients
and $\frac {e}{24}$ for $\w_{\ell^2}^e$. For any discriminant $D$, there
are suitable choices of primes in arithmetic progressions modulo~$D$
such that $e/2 = 1$ resp. $e = 1$ are reachable, and $c (f)$ may become
arbitrarily close to $\frac {1}{12}$ resp. $\frac {1}{24}$. However,
at the same time, the degrees of $\PhiNc$ in $F$ and $J$ tend to
infinity, which may be undesirable in complex multiplication
applications where the modular polynomial needs to be factored
over a finite field.

In Table~\ref {tab:comparison}, we list in decreasing order of
attractiveness the
functions $f$ together with the factors $1 / c (f)$ they allow
to gain in height compared to $j$ and with the degree of the modular
polynomial in $J$, thus completing the tables of \cite{EnMo02} and
\cite[p.~21]{Enge07}. We limit ourselves to functions gaining a factor
of at least~$13$ and with degree in $J$ at most~$20$.
The function $\w_2$ is in fact the Weber function $\wfone$,
and leads to the same height as the other two Weber functions
$\wf$ and $\wftwo$.
Notice that, as indicated by the explicit formul{\ae}, transformation
levels divisible by~$2$ or~$3$ (or, in general, small primes)
tend to yield smaller class invariants.

\newcommand {\entree}[2]{\genfrac{}{}{0pt}{1}{#1}{#2}}
\begin {table}[hbt]
\label {tab:comparison}
\caption {Comparison of class invariants: height factor and degree in $J$}
\renewcommand {\arraystretch}{0.7}
\renewcommand {\arraycolsep}{0.3em}
$$
\begin{array}{cccccccccccc}
& \entree{\w_{2}}{72, 1}
&>& \entree{\w_{4}}{48, 1}
&>& \entree{\w_{2, 73}}{37, 6}
&>& \entree{\w_{2, 97}}{147/4, 8}
&>& \entree{\w_{9}}{36, 1}
&=& \entree{\w_{2}^{2}}{36, 1}
\\
\\
>& \entree{\w_{16}}{32, 6}
&>& \entree{\w_{25}}{30, 1}
&>& \entree{\w_{3, 13}}{28, 2}
&=& \entree{\w_{49}}{28, 2}
&>& \entree{\w_{81}}{27, 12}
&>& \entree{\w_{11^{2}}}{132/5, 5}
\\
\\
>& \entree{\w_{13^{2}}}{26, 7}
&>& \entree{\w_{17^{2}}}{51/2, 12}
&>& \entree{\w_{3, 37}}{76/3, 6}
&=& \entree{\w_{19^{2}}}{76/3, 15}
&>& \entree{\w_{3, 61}}{124/5, 10}
&>& \entree{\w_{5, 7}}{24, 2}
\\
\\
=& \entree{\w_{2}^{3}}{24, 1}
&=& \entree{\w_{6}^{2}}{24, 6}
&=& \entree{\w_{4}^{2}}{24, 1}
&=& \entree{\w_{3}^{2}}{24, 1}
&>& \entree{\w_{5, 13}}{21, 4}
&=& \entree{\w_{2, 13}^{2}}{21, 2}
\\
\\
>& \entree{\w_{12}^{2}}{144/7, 14}
&>& \entree{\w_{5, 19}}{20, 6}
&>& \entree{\w_{5, 31}}{96/5, 10}
&>& \entree{\w_{5, 37}}{19, 12}
&=& \entree{\w_{2, 37}^{2}}{19, 6}
&>& \entree{\w_{7, 13}}{56/3, 6}
\\
\\
>& \entree{\w_{2, 61}^{2}}{93/5, 10}
&>& \entree{\w_{7, 17}}{18, 8}
&=& \entree{\w_{15}^{2}}{18, 8}
&=& \entree{\w_{8}^{2}}{18, 8}
&=& \entree{\w_{2}^{4}}{18, 1}
&=& \entree{\w_{5}^{2}}{18, 1}
\\
\\
=& \entree{\w_{10}^{2}}{18, 4}
&>& \entree{\w_{11, 13}}{84/5, 10}
&>& \entree{\w_{3, 7}^{2}}{16, 2}
&=& \entree{\w_{35}^{2}}{16, 18}
&=& \entree{\w_{21}^{2}}{16, 6}
&=& \entree{\w_{40}^{2}}{16, 18}
\\
\\
=& \entree{\w_{14}^{2}}{16, 18}
&=& \entree{\w_{16}^{2}}{16, 6}
&=& \entree{\w_{28}^{2}}{16, 12}
&=& \entree{\w_{7}^{2}}{16, 1}
&=& \entree{\w_{3}^{3}}{16, 1}
&=& \entree{\w_{6}^{3}}{16, 6}
\\
\\
>& \entree{\w_{45}^{2}}{108/7, 14}
&>& \entree{\w_{13, 13}}{91/6, 12}
&>& \entree{\w_{55}^{2}}{72/5, 10}
&=& \entree{\w_{77}^{2}}{72/5, 20}
&=& \entree{\w_{22}^{2}}{72/5, 10}
&=& \entree{\w_{11}^{2}}{72/5, 5}
\\
\\
=& \entree{\w_{33}^{2}}{72/5, 10}
&=& \entree{\w_{27}^{2}}{72/5, 15}
&>& \entree{\w_{91}^{2}}{14, 16}
&=& \entree{\w_{65}^{2}}{14, 18}
&=& \entree{\w_{13}^{2}}{14, 1}
&>& \entree{\w_{12}^{3}}{96/7, 14}
\\
\\
>& \entree{\w_{2, 17}^{3}}{27/2, 4}
&=& \entree{\w_{85}^{2}}{27/2, 8}
&=& \entree{\w_{34}^{2}}{27/2, 16}
&=& \entree{\w_{17}^{2}}{27/2, 4}
&>& \entree{\w_{3, 19}^{2}}{40/3, 6}
&=& \entree{\w_{7\cdot 19}^{2}}{40/3, 12}
\\
\\
=& \entree{\w_{57}^{2}}{40/3, 18}
&=& \entree{\w_{19}^{2}}{40/3, 3}
&>& \entree{\w_{23}^{2}}{144/11, 11}
\end{array}
$$
\end {table}

%%%%% S
\section{Outlook}

The presented results concern singular values of powers of $\w_N$
as class invariants.
It is possible to obtain smaller invariants by
authorising $24$-th roots of unity to enter the game. This was already
done by Weber for $N=2$ (the classical $f$-functions) and
by Gee in \cite{Gee01} for $N=3$.
For instance, $\zeta_4\w_7^2$ is
an invariant for $D=-40$, leading to the minimal polynomial
$${X}^{2}+ \left( -5+2\omega \right) X+3-4\,\omega.$$

Similarly, when $N$ is not a square and $e$ is odd, then
$\w_N^e \circ S$ has a $q$-expansion that is rational up to a
factor $\sqrt N$, so that Theorems~\ref {th:main} and~\ref {th:N-system}
are not applicable any more. Nevertheless, $\w_N^e$ may yield class
invariants; this is well-known for Weber's original functions in certain
cases.

\paragraph {Acknowledgements.}
The second author wants to thank the University
of Waterloo for its hospitality during his sabbatical leave; he also
wants to acknowledge the warm and studious atmosphere of the Asahi
Judo club of Kitchener where large parts of the results were proven.
The first author was partially funded by ERC Starting Grant ANTICS 278537.

\bibliographystyle {plain}
\bibliography {weber}

\end {document}